\newcounter{zlist}
\newenvironment{zlist}{\begin{list}{{\rm(\arabic{zlist})}}{
\usecounter{zlist}\leftmargin2.5em\labelwidth2em\labelsep0.5em
\topsep0.6ex\itemsep0.3ex plus0.2ex minus0.3ex
\parsep0.3ex plus0.2ex minus0.1ex}}{\end{list}}
\newcounter{blist}
\newcounter{rlist}
\newenvironment{rlist}{\begin{list}{{\rm(\roman{rlist})}}{
\usecounter{rlist}\leftmargin2.5em\labelwidth2em\labelsep0.5em
\topsep0.6ex\itemsep0.3ex plus0.2ex minus0.3ex
\parsep0.3ex plus0.2ex minus0.1ex}}{\end{list}}
\newtheorem{theorem}{Theorem}[section]
\newtheorem{lemma}[theorem]{Lemma}
\newtheorem{thm}[theorem]{}
\newtheorem{proposition}[theorem]{Proposition}
\newtheorem{corollary}[theorem]{Corollary}
\newtheorem{remark}[theorem]{Remark}
\numberwithin{equation}{section}
\newcommand{\bG}{{\mathcal{G}}}
\newcommand{\bT}{{\mathcal{T}}}
\newcommand{\bS}{{\mathcal{S}}}
\newcommand{\bZ}{{\mathcal{Z}}}
\newcommand{\D}{{\mathbb{D}}}
\newcommand{\A}{{\mathbb{A}}}
\newcommand{\B}{{\mathbb{B}}}
\newcommand{\M}{{\mathbb{M}}}
\newcommand{\V}{{\mathbb{V}}}
\newcommand{\wG}{{\widehat{G}}}
\newcommand{\bbT}{{\bT^l_{A}}}
\newcommand{\bbG}{{\bG^l_{\mathcal{Z}}}}
\newcommand{\ve}{\varepsilon}
\newcommand{\xr}{\xrightarrow}
\newcommand{\ot}{\otimes}
\begin{document}

\title{Galois functors and generalised Hopf modules}
 \author{Bachuki Mesablishvili and Robert Wisbauer}

\begin{abstract}
As shown in a previous paper by the same authors, the theory of Galois functors provides a categorical
framework for the characterisation of bimonads on any category as {\em Hopf monads} and also for
 the characterisation of opmonoidal monads on monoidal categories
as {\em right Hopf monads} in the sense of Brugui\`eres and Virelizier.
Hereby the central part is to describe conditions under which a comparison functor between the base
category and the category of Hopf modules becomes an equivalence ({\em Fundamental Theorem}).

For monoidal categories,  Aguiar and Chase
extended the setting by replacing the base category by a comodule category for some comonoid and considering a comparison functor to {\em generalised Hopf modules}.
For duoidal categories, B\"ohm, Chen and Zhang
investigated a comparison functor to the Hopf modules over a {\em bimonoid}  induced by the two
monoidal structures given in such categories. In both approaches  fundamental theorems are proved and
the purpose of this paper is to show that these can be derived from the theory of Galois functors.
\end{abstract}

\keywords{Entwining structures, bimonads, (generalised) Hopf modules, Galois functors,
 comparison functors}
\subjclass[2010]{16T05,  16T11, 18A40, 18C15}

\maketitle

\tableofcontents

\section*{Introduction}

Bialgebras $A$ over a commutative ring $R$
induce an endofunctor $A\ot_R-$  on the category $\M_R$ of $R$-modules
which has a monad and a comonad structure subject to some compatibility conditions.
To make the bialgebra $A$ a {\em Hopf algebra} the comparison functor from $\M_R$
to the category of Hopf modules $\M^A_A$ induced by $A\ot_R-$ has to be an equivalence
(e.g. \cite[7.9]{BBW}).

Since all these constructions are based on the tensor product in $\M_R$,
one may try to extend the notions to monads  $\bT=(T,m,e)$ on (strictly) monoidal categories $(\V,\ot, I)$.
To ensure that the Eilenberg-Moore category $\V_\bT$ is again monoidal,
$\bT$ has to be an {\em opmonoidal monad} (e.g. \cite{McC}).  Such functors consist of two parts:
the monad $\bT$  and a comonad $-\ot T(I)$ on $\V$ induced by the coalgebra $T(I)$  which are related
by a mixed distributive law (entwining) (e.g. \cite[Section 5]{MW-Gal}). Then a comparison functor between $\V$ and
the category of the {\em entwined  modules} determined by this entwining
(called {\em right Hopf $\bT$-modules} in \cite[Section 4.2]{BV}) may be considered.
\cite[Theorem 4.7]{MW-Op} gives
a necessary and sufficient condition for this comparison functor to be an equivalence
of categories.

In \cite{MW-Gal}, an entwining of a monad  $\bT=(T,m,e)$ and a comonad $\bG=(G,\delta, \ve)$ on any category $\A$ is considered, that is, a natural transformation  $\lambda:TG\to GT$ subject to certain commutativity conditions
(e.g. \cite[5.3]{W}). Then the comonad $\bG$
on $\A$ can be lifted to a comonad $\widehat{\bG}$
and the $\lambda$-entwining modules are just the $\widehat{\bG}$-comodules in $\A_\bT$ (see \ref{lambda-bim}).
For a comparison functor $K : \A \to (\A_T)^{\widehat{G}}$ one requires  commutativity of the
diagram
$$
\xymatrix{ \A  \ar[r]^-{K}\ar[rd]_{\phi_\bT} &
(\A_\bT)^{\widehat{G}} \ar[d]^{U^{\widehat{G}}}\\
& \A_\bT , }
$$
where $\phi_\bT$ denotes the free $\bT$-module functor and ${U^{\widehat{G}}}$ the forgetful $\widehat{G}$-comodule functor.
In \cite{MW-Gal,MW-Op} conditions are given which make $K$ an equivalence.

This setting comprises the opmonoidal monads outlined above and it also applies to the {\em bimonads}
on arbitrary categories introduced in \cite[5.13]{W}, \cite[Definition 4.1]{MW-Bim}.

To subsume the generalised Hopf modules studied by Aguiar and Chase  in
\cite{A}, one has to add an adjunction
$L \dashv R: \B\to \A$  to the picture and observe that the resulting adjunction  $\phi_{\bT}L \dashv RU_{\bT}$
 generates a comonad on  $ \A_{\bT}$.
 Now the results from \cite{MW-Gal} can be applied to the diagram
$$
\xymatrix{ \B \ar[r]^{K\quad} \ar[dr]_{\phi_\bT L} &
 (\A _{\bT})^{\widehat{\bG}} \ar[d]^{U^{\widehat{\bG}}}\\
   &\A_\bT  . }
$$
This is outlined in Section \ref{g-Hopf} leading to the Fundamental Theorem of generalised Hopf modules from \cite{A}.

Having made this extension, also the $A$-Hopf modules of a bimonoid $A$ in a
duoidal category $(\D, \circ,I,\ast,J)$ and the related comparison functor
considered by B\"ohm, Chen and Zhang in \cite{BCZ} can be handled in our setting:
 roughly speaking, for a bimonoid $A$, $-\circ A$ defines a monad  while
 $-\ast A$  is a comonad on $\D$ and the two functors are related by an entwining. Now it is fairly obvious how our techniques apply and
at the end of Section \ref{bimod} we obtain the Fundamental Theorem for $A$-Hopf modules from \cite{BCZ}.

\section{Galois functors} \label{sec-gen}

\begin{thm}\label{monad-comon}{\bf Monads and comonads.} \em
Let $\bT=(T, m, e)$ be a monad on a category $\A$. We write

\begin{itemize}
  \item
$\A_{\bT}$ for the Eilenberg-Moore category of $\bT$-modules and
$$\eta_{\bT}, \ve_{\bT}:\phi_{\bT} \dashv U_F :\A_{\bT} \to \A$$ for the corresponding forgetful-free adjunction;
\item
$\widetilde\A_\bT$ for the Kleisli category for   $\bT$ (as a full subcategory of $\A_{\bT}$, e.g. \cite{BBW})
and $\overline\phi_{\bT} \dashv u_{\bT}: \widetilde\A_\bT \to \A $ for the corresponding Kleisli adjunction.
\end{itemize}
\smallskip

Dually, if $\bG = (G, \delta, \ve)$ is a comonad on $\A$, we write
 $\A^{\bG}$ for the Eilenberg--Moore category of
$\bG$-coalgebras and
$$\eta^{\bG} , \ve^{\bG} :U^{\bG} \dashv \phi^{\bG} :\A \to \A^{\bG}$$ for the corresponding forgetful-cofree adjunction.
\end{thm}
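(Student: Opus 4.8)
This statement records standard categorical notation rather than a genuinely new assertion, so the plan is to recall the Eilenberg--Moore and Kleisli constructions for a monad and to verify that the displayed adjunctions genuinely exist. First I would define the Eilenberg--Moore category $\A_\bT$: its objects are pairs $(X,h)$ with $h\colon TX\to X$ satisfying the associativity law $h\circ m_X=h\circ Th$ and the unit law $h\circ e_X=\mathrm{id}_X$, and its morphisms are the $\A$-maps commuting with the structure maps. The forgetful functor $U_\bT\colon\A_\bT\to\A$ is the obvious projection, while the free functor $\phi_\bT$ sends $X$ to $(TX,m_X)$; here one checks that $m_X$ really is a $\bT$-module structure, which is exactly the monad associativity and left unit laws. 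I would then exhibit the unit $\eta_\bT=e$ and the counit $\ve_\bT$, whose component at $(X,h)$ is $h$ viewed as a map $(TX,m_X)\to(X,h)$, and confirm the two triangle identities; both reduce directly to the monad unit axioms.

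Second, for the Kleisli category $\widetilde\A_\bT$ I would take objects to be those of $\A$ and hom-sets $\widetilde\A_\bT(X,Y)=\A(X,TY)$, with composition built from $m$, and define $\overline\phi_\bT$ and $u_\bT$ in the usual way. The point worth checking is the asserted embedding of $\widetilde\A_\bT$ into $\A_\bT$ as a \emph{full} subcategory: the comparison functor sends $X$ to the free module $\phi_\bT X=(TX,m_X)$ and a Kleisli map $X\to TY$ to its adjoint transpose, and one verifies that this functor is fully faithful using the free--forgetful adjunction $\phi_\bT\dashv U_\bT$ together with the description of module maps out of a free module. This identification also makes the Kleisli adjunction a restriction of the Eilenberg--Moore one.

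Finally, the comonadic half is pure dualisation: applying the monadic construction in $\A^{\mathrm{op}}$ to the monad induced by $\bG$ yields the category $\A^\bG$ of $\bG$-coalgebras, the forgetful functor $U^\bG$, the cofree functor $\phi^\bG$ sending $X$ to $(GX,\delta_X)$, and the adjunction $U^\bG\dashv\phi^\bG$ with unit $\eta^\bG$ and counit $\ve^\bG$; no separate argument is needed. There is no real obstacle here, since all of this is classical; the only genuine bookkeeping is the verification of the triangle identities and, slightly more delicately, the fullness of the Kleisli embedding, which is where I would spend whatever care the write-up requires.
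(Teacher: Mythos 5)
Your proposal is correct and matches the paper's treatment: the item \ref{monad-comon} is purely a notational record of the standard Eilenberg--Moore, Kleisli and dual comonadic constructions, for which the paper offers no proof beyond a citation, and your verifications (triangle identities, fullness of the Kleisli embedding via the free--forgetful adjunction, dualisation for comonads) are exactly the standard ones being invoked.
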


\begin{thm}\label{com-fun}{\bf Comodule functors.} \em Consider an adjunction ${\eta, \sigma :F \dashv R:\A \to \B}$ and
a comonad $\bG=(G,\delta,\varepsilon)$ on $\A$. The functor $F: \B \to \A$ is called
a {\em left $\bG$-comodule} (e.g. \cite[Section 3]{MW-Bim}) if there exists a natural transformation $\kappa_F : F \to G F$
inducing commutativity of the diagrams

$$\xymatrix{
F \ar@{=}[dr] \ar[r]^-{\kappa_F}& GF \ar[d]^-{\varepsilon F}\\
& F,} \qquad \xymatrix{
F \ar[r]^-{\kappa_F} \ar[d]_-{\kappa_F}& GF \ar[d]^-{\delta F}\\
GF \ar[r]_-{G \kappa_F}& GGF.}
$$

There exist bijective correspondences between
\begin{itemize}
  \item [(i)] functors $K : \B \to \A^\bG$ with commutative diagram
$$\xymatrix{ \B  \ar[r]^-{K}\ar[rd]_{F} & \A^\bG \ar[d]^{U^\bG}\\
& \A \, ;}
$$
  \item [(ii)] left $\bG$-comodule structures $\kappa_{F} :F \to GF$ on
$F$;

   \item [(iii)] comonad morphisms $t_K:FR\to G$ from the comonad
generated by the adjunction $F \dashv R$ to $\bG$.
\end{itemize}
\medskip

\noindent These bijections are constructed as follows: Given a functor $K$ making the
diagram (\ref{com-fun}(i)) commute,
 $K(X)=(F(X), \kappa_{X})$ for some morphism $\kappa_{X}: F(X) \to GF(X)$
and the collection $\{\kappa_b,\, b \in\B\}$ constitutes a natural transformation
$\kappa_F:F \to GF$  making $F$ a $\bG$-comodule. Conversely, if $(F,\kappa_F: F\to GF)$
is a $\bG$-module, then $K : \B \to \A^\bG$ is defined by $K(X)=(F(X), (\kappa_F)_X)$.
Next, for any (left) $\bG$-comodule structure $\kappa_{F} :F \to GF$,
the composite
$$\xymatrix{t_{K}:FR \ar[r]^-{\kappa_F R}& GFR
\ar[r]^-{G \sigma }  & G}$$ is a comonad morphisms from the comonad
generated by  $F \dashv R$ to the comonad $\bG$.
On the other hand, for any comonad morphism $t:FR \to G$, the composite
$$\kappa_F: F \xrightarrow{ F\eta }FRF \xrightarrow{t F}GF$$
defines a $\bG$-comodule structure on $F$.

A left $\bG$-comodule functor $F$ is said to be {\em $\bG$-Galois} provided $t_K$
is an isomorphism (e.g. \cite[Definition 1.3]{MW-Gal}).
\end{thm}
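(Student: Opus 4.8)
The plan is to split the statement into two bijections established in sequence, namely (i)$\leftrightarrow$(ii) and (ii)$\leftrightarrow$(iii), and then to chain them; the displayed formulas are then read off as the explicit descriptions. I would treat the first as an unpacking of what it means to lift $F$ through $U^\bG$, and the second as the mate calculus for the adjunction $F \dashv R$.

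For (i)$\leftrightarrow$(ii), I would argue as follows. A functor $K:\B\to\A^\bG$ with $U^\bG K = F$ is forced to send each object $b$ to a $\bG$-coalgebra whose underlying object is $F(b)$, so $K(b)=(F(b),\kappa_b)$ for a uniquely determined structure morphism $\kappa_b:F(b)\to GF(b)$. Functoriality of $K$ — that each $K(g)$ be a coalgebra morphism and that composition be respected — is exactly the naturality of the family $\{\kappa_b\}$, so $\kappa_F=\{\kappa_b\}$ is a natural transformation $F\to GF$; and the counit and coassociativity axioms of the $\bG$-coalgebra $K(b)$ are objectwise precisely the two diagrams defining a left $\bG$-comodule structure on $F$. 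This yields the bijection, the inverse sending $\kappa_F$ to the functor $b\mapsto (F(b),(\kappa_F)_b)$.

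For (ii)$\leftrightarrow$(iii), the two assignments $\kappa_F\mapsto t_K=G\sigma\circ\kappa_F R$ and $t\mapsto\kappa_F=tF\circ F\eta$ are the standard mates under $F\dashv R$. I would first verify that, as a correspondence between natural transformations $F\to GF$ and $FR\to G$, they are mutually inverse: substituting one formula into the other and invoking naturality of $\kappa_F$ (respectively of $t$) together with the triangle identities $\sigma F\circ F\eta=1_F$ and $R\sigma\circ\eta R=1_R$ collapses each round trip to the identity. It then remains to match axioms against the comonad $(FR,\,F\eta R,\,\sigma)$ generated by $F\dashv R$: the comodule counit identity $\ve F\circ\kappa_F=1_F$ should correspond to the counit-compatibility $\ve\circ t=\sigma$, and the comodule coassociativity should correspond to the comultiplication-compatibility $\delta\circ t=Gt\circ tFR\circ F\eta R$.

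The step I expect to be most delicate is this last equivalence, between coassociativity of $\kappa_F$ and comultiplication-compatibility of $t$: passing from one to the other is not a single naturality square but a diagram chase that uses the interchange law for horizontal composition together with both triangle identities. The counit matching and the mutual-inverse check are routine by comparison. Once all this is in place, composing the bijections (i)$\leftrightarrow$(ii)$\leftrightarrow$(iii) gives the three stated correspondences, and the Galois condition is simply the requirement that the resulting $t_K$ be invertible.
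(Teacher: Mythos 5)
Your argument is correct and complete in outline: the paper itself states \ref{com-fun} without proof (citing \cite{MW-Bim} and \cite{M}), merely displaying the explicit constructions, and your two-step decomposition --- (i)$\leftrightarrow$(ii) by unpacking the lifting through $U^\bG$, then (ii)$\leftrightarrow$(iii) by the mate calculus for $F\dashv R$ --- is exactly the standard argument underlying those displayed formulas. The step you flag as delicate does go through as you expect: for instance, $Gt\circ tFR\circ F\eta R$ with $t=G\sigma\circ\kappa_F R$ collapses to $Gt\circ\kappa_F R$ by naturality of $\kappa_F$ and the triangle identity $\sigma F\circ F\eta=1_F$, after which coassociativity of $\kappa_F$ and comultiplication-compatibility of $t$ visibly coincide.
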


\begin{proposition}\label{mes} {\rm (\cite[Theorem 4.4]{M})} The functor $K$
(in {\rm \ref{com-fun}}) is an equivalence of categories if and only if
\begin{itemize}
\item [(i)]  the functor $F$ is comonadic and
\item [(ii)] $t_K$ is an isomorphism ($F$ is  $\bG$-Galois).
\end{itemize}
\end{proposition}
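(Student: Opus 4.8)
The plan is to factor the comparison functor $K$ through the Eilenberg--Moore category of the comonad generated by the adjunction $F\dashv R$ and then treat the two factors separately. Write $\bS=(FR,\,F\eta R,\,\sigma)$ for the comonad on $\A$ generated by $F\dashv R$, let $\Phi:\B\to\A^{\bS}$, $X\mapsto(FX,F\eta_X)$, be the canonical comparison functor (so that $F$ is comonadic exactly when $\Phi$ is an equivalence), and recall that the comonad morphism $t_K:\bS\to\bG$ induces a functor $\A^{t_K}:\A^{\bS}\to\A^{\bG}$, $(Y,\rho)\mapsto(Y,\,t_{K,Y}\circ\rho)$, lying over $\A$. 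The first thing I would verify is the factorisation $K=\A^{t_K}\circ\Phi$. With $t_K$ as constructed in \ref{com-fun}, so that $\kappa_F=t_K F\circ F\eta$, this is immediate on objects: $\A^{t_K}(\Phi X)=(FX,\,t_{K,FX}\circ F\eta_X)=(FX,(\kappa_F)_X)=K(X)$, and on morphisms it is forced since $U^{\bG}$ is faithful.

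Granting the factorisation, the ``if'' direction is short: if $F$ is comonadic then $\Phi$ is an equivalence, and if $t_K$ is an isomorphism of comonads then $\A^{t_K}$ is an isomorphism of categories, whence $K=\A^{t_K}\circ\Phi$ is an equivalence. For the ``only if'' direction I would proceed in two steps. For (i): since $K$ is an equivalence and the forgetful functor $U^{\bG}$ is comonadic, the composite $F=U^{\bG}K$ is comonadic, comonadicity being stable under precomposition with an equivalence. Consequently $\Phi$ is an equivalence, and from $K=\A^{t_K}\circ\Phi$ with both $K$ and $\Phi$ equivalences we conclude that $\A^{t_K}$ is an equivalence.

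It remains to deduce (ii), that $t_K$ is an isomorphism, from the fact that $\A^{t_K}$ is an equivalence; this is the step I expect to be the main obstacle. My approach is to exhibit $t_K$ as the underlying natural transformation of the canonical mate $\mu:\A^{t_K}\phi^{\bS}\to\phi^{\bG}$ associated with the identity $U^{\bG}\A^{t_K}=U^{\bS}$ and the cofree adjunctions $U^{\bS}\dashv\phi^{\bS}$ and $U^{\bG}\dashv\phi^{\bG}$. A direct computation identifies the underlying $\A$-map of $\mu_X$ with the transpose of the counit $\sigma_X:FRX\to X$, namely $G\sigma_X\circ t_{K,FRX}\circ(F\eta R)_X$, and this collapses to $t_{K,X}$ by naturality of $t_K$ at $\sigma_X$ together with the counit law $FR(\sigma_X)\circ(F\eta R)_X=1_{FRX}$. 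Now, because $\A^{t_K}$ is an equivalence, the two right adjoints $\phi^{\bS}$ and $(\A^{t_K})^{-1}\phi^{\bG}$ of the common left adjoint $U^{\bS}=U^{\bG}\A^{t_K}$ are canonically isomorphic; transporting this isomorphism along $\A^{t_K}$ shows that $\mu$ is an isomorphism. Since $U^{\bG}$ reflects isomorphisms and $U^{\bG}\mu=t_K$, the comonad morphism $t_K$ is an isomorphism, which is (ii).

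The delicate point in the last step is not the production of an abstract isomorphism $FR\cong G$ — that follows formally from uniqueness of adjoints — but the verification that the abstractly produced isomorphism of right adjoints really has $t_K$ as its underlying transformation, i.e. that it coincides with the concrete mate $\mu$ whose underlying map I computed to be $t_K$. I would therefore spend the effort on matching these two isomorphisms, after which everything else is the routine bookkeeping of the mate correspondence and the counit and naturality laws already recorded above.
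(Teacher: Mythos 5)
Your proof is correct, but there is nothing in the paper to compare it against: Proposition \ref{mes} is quoted from \cite[Theorem 4.4]{M} and the paper supplies no proof of its own. Your argument is essentially the standard proof of that cited result. The factorisation $K=\A^{t_K}\circ\Phi$ through $\A^{\bS}$, $\bS=(FR,F\eta R,\sigma)$, is the right decomposition: it makes (i) literally the statement that $\Phi$ is an equivalence, and reduces (ii) to showing that $\A^{t_K}$ is an equivalence only if $t_K$ is invertible. Your identification $U^{\bG}\mu=t_K$ checks out ($U^{\bG}\mu_X=G\sigma_X\cdot t_{K,FRX}\cdot F\eta_{RX}=t_{K,X}\cdot FR\sigma_X\cdot F\eta_{RX}=t_{K,X}$), and the one step you defer -- matching the abstract isomorphism of right adjoints with the concrete mate $\mu$ -- does go through by routine mate calculus: if $W$ is an adjoint quasi-inverse of $\A^{t_K}$ with unit $u$ and counit $c$, the canonical isomorphism $\phi^{\bS}\cong W\phi^{\bG}$ furnished by uniqueness of right adjoints of $U^{\bS}=U^{\bG}\A^{t_K}$ is itself a mate, and postcomposing its image under $\A^{t_K}$ with $c\phi^{\bG}$ collapses, using naturality of $c$ and the triangle identity $c\A^{t_K}\cdot \A^{t_K}u=1$, to exactly $\mu=\phi^{\bG}\varepsilon^{\bS}\cdot\eta^{\bG}\A^{t_K}\phi^{\bS}$. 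Two cosmetic remarks: in the final step you only need that $U^{\bG}$ \emph{preserves} isomorphisms (any functor does), not that it reflects them, since you conclude $t_K=U^{\bG}\mu$ is invertible from $\mu$ being invertible; and the ``if'' direction could be stated without comonads at all, since an isomorphism of comonads $t_K$ makes $\A^{t_K}$ an isomorphism of categories on the nose.
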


\begin{thm} \label{mod-fun} {\bf Module functors.} \em
For a monad $\bT=(T,m,e)$ on $\A$, a ({\em left}) $\bT$-module functor consists
of a functor $R : \B \to \A$, equipped with a natural transformation
$\alpha_R : TR \to R$ satisfying
   ${\alpha_R} \cdot {eR} = 1$ and   ${\alpha_R} \cdot {mR} = {\alpha_R} \cdot {T\alpha_R }$.

If $(R, \alpha_R)$ is a $\bT$-module, then the assignment
$$X \longmapsto (R(X), (\alpha_R)_X)$$
extends uniquely to a functor $K':\B \to \A_\bT$
with $U_\bT K'=R$. This gives a bijection, natural in $\bT$, between
left $\bT$-module structures on $R:\B \to \A$ and the functors $K':\B \to \A_\bT$
with $U_\bT K=R$.

For any $\bT$-module $(R: \B \to\A,\alpha_R)$ admitting a left adjoint functor
$F : \A \to \B$, the composite
$$ t_{K'}: \xymatrix{T \ar[r]^-{T \eta}& TRF \ar[r]^-{\alpha_R F} & RF},$$
where $\eta : 1 \to RF$ is the unit of the adjunction $F \dashv R$,
is a monad morphism from $\bT$ to the monad on $\A$
generated by the adjunction $F \dashv R$.

A left $\bT$-module $R: \B \to \A$ with a left
adjoint $F: \A \to \B$ is said to be $\bT$-\emph{Galois} if the
corresponding morphism $t_{K'}: T \to RF$ of monads on
$\A$ is an isomorphism.
\end{thm}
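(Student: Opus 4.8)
The statement really contains two pieces of mathematical content — the bijection between module structures and functors over $U_\bT$, and the claim that $t_{K'}$ is a monad morphism — everything else being definitional. The cleanest conceptual observation is that all of \ref{mod-fun} is the formal $(-)^{op}$-dual of the comodule functor theorem \ref{com-fun}: a monad $\bT=(T,m,e)$ on $\A$ is a comonad $\bT^{op}$ on $\A^{op}$, a left $\bT$-module structure $\alpha_R\colon TR\to R$ is a left $\bT^{op}$-comodule structure $(\alpha_R)^{op}\colon R^{op}\to T^{op}R^{op}$, one has $\A_\bT\cong\bigl((\A^{op})^{\bT^{op}}\bigr)^{op}$, a left adjoint $F\dashv R$ becomes a right adjoint (so $R^{op}$ is the relevant comodule functor, now a left adjoint), and monad morphisms become comonad morphisms with the arrows reversed. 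Under this dictionary each clause of \ref{mod-fun} is exactly the image of the corresponding clause of \ref{com-fun}, so in principle nothing new is needed. To keep the section self-contained I would nonetheless record the two direct arguments, which I sketch below.

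For the bijection, given a functor $K'\colon\B\to\A_\bT$ with $U_\bT K'=R$ I would write $K'(b)=(R(b),h_b)$; functoriality of $K'$ forces every $K'(f)=R(f)$ to be a morphism of $\bT$-modules, and this is precisely the naturality of the family $\{h_b\}$ viewed as a transformation $TR\to R$, while the Eilenberg–Moore object axioms for each $(R(b),h_b)$ are exactly $\alpha_R\cdot eR=1$ and $\alpha_R\cdot mR=\alpha_R\cdot T\alpha_R$. Conversely a module structure $\alpha_R$ defines $K'$ on objects by $K'(b)=(R(b),(\alpha_R)_b)$ and on morphisms by $K'(f)=R(f)$; the two assignments are mutually inverse by construction. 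This is just the universal property of $\A_\bT$, and naturality in $\bT$ is immediate from the formulas.

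For the monad-morphism claim I would write $t=t_{K'}=\alpha_R F\cdot T\eta$ and verify the two compatibilities. For the unit, naturality of $e\colon 1\to T$ at $\eta$ gives $T\eta\cdot e=eRF\cdot\eta$, whence $t\cdot e=\alpha_R F\cdot eRF\cdot\eta=(\alpha_R\cdot eR)F\cdot\eta=\eta$, the unit of the monad $RF$. For the multiplication, whose target law is $R\sigma F$, I start from $t\cdot m$: naturality of $m$ at $\eta$ rewrites $T\eta\cdot m=mRF\cdot TT\eta$, and then module associativity $\alpha_R\cdot mR=\alpha_R\cdot T\alpha_R$ turns the left-hand side into $\alpha_R F\cdot Tt$. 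On the other side, naturality of $\alpha_R$ evaluated at the components of $\sigma F$ slides $R\sigma F$ past $\alpha_R$ (giving $\alpha_R F\cdot TR\sigma F$), after which the triangle identity $R\sigma\cdot\eta R=1$ cancels to leave $\alpha_R F\cdot Tt$ as well, so both sides agree. The one step that needs genuine care is this multiplicative compatibility: one must line up the whiskerings so that naturality of $\alpha_R$ and the triangle identity apply in the right order. That, however, is a short string-diagram chase rather than a real obstacle, and the rest of the argument is bookkeeping dual to \ref{com-fun}.
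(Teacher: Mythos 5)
Your proposal is correct and follows essentially the same route as the paper: the paper states \ref{mod-fun} without proof, treating it as the formal dual of \ref{com-fun} (as confirmed by the remark immediately afterwards that Proposition \ref{mes1} is obtained by ``expressing the dual of'' the cited theorem), which is exactly your opening observation. Your supplementary direct verifications are also sound --- the bijection is the universal property of $\A_\bT$, and the two monad-morphism identities follow, as you say, from naturality of $e$ and $m$ at $\eta$, the module axioms, naturality of $\alpha_R$ at the counit components, and the triangle identity $R\sigma\cdot\eta R=1$.
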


Expressing the dual of \cite[Theorem 4.4]{M} in the present situation gives:

\begin{proposition}\label{mes1}
The functor $K'$ (in {\rm\ref{mod-fun}}) is an equivalence of categories if and
only if
\begin{itemize}
\item [(i)] the functor $R$ is monadic and
\item [(ii)] $R$ is a $\bT$-Galois module functor.
\end{itemize}
\end{proposition}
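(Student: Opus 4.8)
The plan is to derive \ref{mes1} directly from \ref{mes} (= \cite[Theorem 4.4]{M}) by passing to opposite categories, since the module-functor situation of \ref{mod-fun} is precisely the formal dual of the comodule-functor situation of \ref{com-fun}. Concretely, I would apply \ref{mes} with $\A$, $\B$, the comonad, and the $\bG$-comodule functor replaced by $\A^{\mathrm{op}}$, $\B^{\mathrm{op}}$, the comonad $\bT^{\mathrm{op}}$ on $\A^{\mathrm{op}}$, and the functor $R^{\mathrm{op}}$.

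First I would record the standard dualities. A monad $\bT=(T,m,e)$ on $\A$ yields a comonad $\bT^{\mathrm{op}}=(T^{\mathrm{op}},m^{\mathrm{op}},e^{\mathrm{op}})$ on $\A^{\mathrm{op}}$, and there is a canonical isomorphism $(\A_\bT)^{\mathrm{op}}\cong(\A^{\mathrm{op}})^{\bT^{\mathrm{op}}}$ between $\bT$-modules and $\bT^{\mathrm{op}}$-coalgebras. Moreover, taking opposites interchanges left and right adjoints: from $F\dashv R:\A\to\B$ (with $F:\A\to\B$ the left and $R:\B\to\A$ the right adjoint of \ref{mod-fun}) one obtains $R^{\mathrm{op}}\dashv F^{\mathrm{op}}$, so that $R^{\mathrm{op}}:\B^{\mathrm{op}}\to\A^{\mathrm{op}}$ is now the \emph{left} adjoint. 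Thus $\B^{\mathrm{op}}$, $\A^{\mathrm{op}}$, $R^{\mathrm{op}}$, $F^{\mathrm{op}}$, $\bT^{\mathrm{op}}$ have exactly the shape of the data $\B$, $\A$, $F$, $R$, $\bG$ appearing in \ref{com-fun}.

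Next I would check that the three pieces of structure match up under $(-)^{\mathrm{op}}$. The two axioms ${\alpha_R}\cdot{eR}=1$ and ${\alpha_R}\cdot{mR}={\alpha_R}\cdot{T\alpha_R}$ for the $\bT$-module structure $\alpha_R:TR\to R$ dualize term-by-term to the counit and coassociativity diagrams of \ref{com-fun}, making $R^{\mathrm{op}}$ a left $\bT^{\mathrm{op}}$-comodule. Under $(\A_\bT)^{\mathrm{op}}\cong(\A^{\mathrm{op}})^{\bT^{\mathrm{op}}}$, the comparison functor $K':\B\to\A_\bT$ corresponds to the comparison functor $(K')^{\mathrm{op}}:\B^{\mathrm{op}}\to(\A^{\mathrm{op}})^{\bT^{\mathrm{op}}}$ of \ref{com-fun}. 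Finally, the monad $RF$ generated by $F\dashv R$ has opposite $(RF)^{\mathrm{op}}=R^{\mathrm{op}}F^{\mathrm{op}}$, which is exactly the comonad generated by $R^{\mathrm{op}}\dashv F^{\mathrm{op}}$, and under this identification the Galois morphism $t_{K'}:T\to RF$ dualizes to the comonad morphism $(t_{K'})^{\mathrm{op}}:(RF)^{\mathrm{op}}\to\bT^{\mathrm{op}}$.

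Now I would simply invoke \ref{mes} in the dual setting: $(K')^{\mathrm{op}}$ is an equivalence if and only if $R^{\mathrm{op}}$ is comonadic and $(t_{K'})^{\mathrm{op}}$ is an isomorphism. Translating each condition back gives the claim, since $(K')^{\mathrm{op}}$ is an equivalence iff $K'$ is, $R^{\mathrm{op}}$ is comonadic iff $R$ is monadic, and $(t_{K'})^{\mathrm{op}}$ is an isomorphism iff $t_{K'}$ is, i.e. iff $R$ is $\bT$-Galois. The only real work is the bookkeeping: I expect the main (routine but error-prone) obstacle to be confirming that all three correspondences — module versus comodule structure, comparison functor, and Galois morphism — are matched consistently under $(-)^{\mathrm{op}}$, including the swap of left and right adjoint and the identification $(\A_\bT)^{\mathrm{op}}\cong(\A^{\mathrm{op}})^{\bT^{\mathrm{op}}}$. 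Once these are pinned down, \ref{mes1} is a direct transcription of \ref{mes}.
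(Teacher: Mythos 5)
Your proof is correct and is exactly the paper's approach: the authors introduce Proposition \ref{mes1} with the single remark ``Expressing the dual of \cite[Theorem 4.4]{M} in the present situation gives,'' i.e.\ they obtain it by the same dualization of Proposition \ref{mes} that you carry out. Your version merely makes the routine opposite-category bookkeeping explicit.
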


\begin{thm}\label{mixed}{\bf Mixed distributive laws.} \em
Let  $\bT=(T,m,e)$ be a monad and $\bG=(G, \delta,\varepsilon)$ a comonad on a
category $\A$.

A \emph{mixed distributive law} or
\emph{entwining} from $\bT$ to $\bG$ is a natural
transformation $\lambda : TG \to TG$ with certain commutative diagrams
(e.g. \cite{Wf}, \cite[5.3]{W}).

A \emph{lifting} of  $\bG$ to $\A_\bT$ is a comonad
$\widehat{\bG}=(\wG, \widehat{\delta},\widehat{\varepsilon})$
on $\A_\bT$ for which $GU_\bT=U_\bT \wG$, $ U_\bT \widehat{\delta}=\delta U_\bT$
and $ U_\bT \widehat{\ve}=\ve U_\bT$.
\end{thm}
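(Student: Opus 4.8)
This block records two definitions; the content that actually requires proof --- and that the cited references \cite{Wf}, \cite[5.3]{W} supply --- is the bijective correspondence between mixed distributive laws $\lambda : TG \to GT$ from $\bT$ to $\bG$ and liftings $\widehat{\bG}$ of $\bG$ to $\A_\bT$. The plan is to establish this bijection in two stages, first for the underlying endofunctor and then for the comonad structure, exploiting throughout that the forgetful functor $U_\bT : \A_\bT \to \A$ is faithful.

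First I would treat functor liftings. A functor $\wG : \A_\bT \to \A_\bT$ with $U_\bT\wG = GU_\bT$ assigns to each $\bT$-module $(X, h : TX \to X)$ a $\bT$-module structure on $GX$. I claim such lifts correspond exactly to natural transformations $\lambda : TG \to GT$ compatible with $e$ and $m$. Given $\lambda$, set the action on $\wG(X,h)$ to be $Gh \cdot \lambda_X : TGX \to GTX \to GX$; the unit axiom of this module reduces to $\lambda \cdot eG = Ge$, and the associativity axiom reduces to the hexagon $\lambda \cdot mG = Gm \cdot \lambda T \cdot T\lambda$. Conversely, given a lift, I would evaluate it on the free module $\phi_\bT X = (TX, m_X)$, whose image is $(GTX, \sigma_X)$ for a structure map $\sigma_X : TGTX \to GTX$, and recover $\lambda_X$ as the composite $TGX \xrightarrow{TGe_X} TGTX \xrightarrow{\sigma_X} GTX$. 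Faithfulness of $U_\bT$ makes every lift rigid, which is precisely what forces these two assignments to be mutually inverse.

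Next I would add the comonad structure. Keeping the functor lift $\wG$ fixed, I would determine when $\delta$ and $\varepsilon$ lift to $\bT$-module morphisms $\widehat{\delta} : \wG \to \wG\wG$ and $\widehat{\varepsilon} : \wG \to 1$ with $U_\bT\widehat{\delta} = \delta U_\bT$ and $U_\bT\widehat{\varepsilon} = \varepsilon U_\bT$. Demanding that each component $\varepsilon_X$ be a morphism of $\bT$-modules, for the actions built from $\lambda$ in the first stage, is equivalent to the counit condition $\varepsilon T \cdot \lambda = T\varepsilon$; likewise, demanding that each $\delta_X$ be a module morphism is equivalent to the comultiplication condition $\delta T \cdot \lambda = G\lambda \cdot \lambda G \cdot T\delta$. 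The comonad axioms for $\widehat{\bG}$ then follow for free: since $U_\bT$ is faithful and reflects identities and composites, coassociativity and counitality of $\widehat{\delta},\widehat{\varepsilon}$ descend from those of $\delta,\varepsilon$.

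The main obstacle I expect is the bookkeeping in the first stage --- verifying that associativity of the action $Gh\cdot\lambda_X$ is genuinely equivalent to the $m$-hexagon for $\lambda$ requires pasting the naturality square of $\lambda$ at $h$ against the defining diagram, and one must also check that the free-module recovery really inverts the construction. Here the computation $\sigma_X \cdot TGe_X = Gm_X \cdot \lambda_{TX} \cdot TGe_X = Gm_X \cdot GTe_X \cdot \lambda_X = \lambda_X$, which uses naturality of $\lambda$ followed by the monad unit law $m_X \cdot Te_X = 1$, shows the recovery returns the original $\lambda$; the reverse round-trip is handled by rigidity of lifts through the faithful $U_\bT$. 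Once this rigidity is secured, the remaining work is a sequence of diagram chases, each matching one of the four entwining axioms to exactly one lifting condition.
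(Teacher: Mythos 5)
Your proposal is correct and follows essentially the same route as the paper: the item \ref{mixed} itself only records definitions, and the substantive content is Theorem \ref{wolff}, for which the paper displays exactly the two constructions you use --- the lifted action $G(h)\cdot\lambda_X$ on $\widehat{G}(X,h)$ and the recovery $\lambda = (U_\bT\ve_\bT\widehat{G}\phi_\bT)\cdot TGe$ --- while deferring the verifications to Wolff. Your filled-in diagram chases (unit and associativity of the lifted action, the round-trip computation via naturality of $\lambda$ at $e_X$, and the matching of the counit/comultiplication axioms to the module-morphism conditions on $\ve$ and $\delta$) are all sound.
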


The following is a version of \cite[Theorem 2.2]{Wf}:

\begin{theorem}\label{wolff} Let $\bT=(T,m,e)$ be a monad and $\bG=(G, \delta,\varepsilon)$ a comonad on a category $\A$.
Then there is a one-to-one correspondence between
\begin{itemize}
\item
 mixed distributive laws $\lambda : TG \to TG$  from $\bT$ to $\bG$ and
\item
 liftings   of   $\bG$ to a comonad $\widehat{\bG}$ on $\A_\bT$.
\end{itemize}
\end{theorem}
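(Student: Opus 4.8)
The plan is to build the correspondence in two stages: first treat $\bG$ merely as an endofunctor, establishing the classical bijection between liftings of the \emph{functor} $G$ to $\A_\bT$ and natural transformations $\lambda:TG\to GT$ compatible with the monad structure, and then show that the comonad data $\delta,\ve$ lift precisely when $\lambda$ additionally satisfies the two remaining (comonad) entwining axioms. The guiding principle throughout is that the forgetful functor $U_\bT:\A_\bT\to\A$ is faithful, so that once a candidate natural transformation on $\A_\bT$ is known to have $\bT$-module morphisms as its components, every comonad identity for $\wG$ is inherited automatically from the corresponding identity for $\bG$.

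For the functor-level bijection I would argue in both directions. Given $\lambda:TG\to GT$, set $\wG(A,a)=(GA,\,Ga\cdot\lambda_A)$ on objects and $\wG(f)=G(f)$ on morphisms. The two monad-compatibility axioms, $\lambda\cdot eG=Ge$ and $\lambda\cdot mG=Gm\cdot\lambda T\cdot T\lambda$, are exactly what is needed to check the unit and associativity axioms of the $\bT$-module $(GA,\,Ga\cdot\lambda_A)$, so $\wG$ is well defined and $U_\bT\wG=GU_\bT$ holds by construction. Conversely, given a functor lifting $\wG$, I would evaluate it on free modules: writing $\wG(TX,m_X)=(GTX,\xi_X)$ for the $\bT$-module structure $\xi_X:TGTX\to GTX$, natural in $X$, define $\lambda_X:TGX\xrightarrow{TGe_X}TGTX\xrightarrow{\xi_X}GTX$. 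The module axioms for $\xi_X$, together with naturality and the monad identities for $\bT$, yield the two monad axioms for $\lambda$, and a direct computation reducing everything to free modules shows the two assignments are mutually inverse.

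It then remains to lift the comonad structure. With a monad-compatible $\lambda$ and its functor lifting $\wG$ fixed, I would determine when $\ve_A:GA\to A$ and $\delta_A:GA\to GGA$ are morphisms of $\bT$-modules. Requiring $\ve_A:(GA,\,Ga\cdot\lambda_A)\to(A,a)$ to be a module morphism reduces, via naturality of $\ve$, to the counit axiom $\ve T\cdot\lambda=T\ve$; requiring $\delta_A:(GA,\,Ga\cdot\lambda_A)\to\wG(GA,\,Ga\cdot\lambda_A)$ to be a module morphism reduces, via naturality of $\delta$, to the comultiplication axiom $\delta T\cdot\lambda=G\lambda\cdot\lambda G\cdot T\delta$. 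When these hold, $\widehat{\ve}$ and $\widehat{\delta}$ are defined component-wise by $\ve$ and $\delta$, and faithfulness of $U_\bT$ promotes the counit and coassociativity laws of $\bG$ to those of $(\wG,\widehat{\delta},\widehat{\ve})$; conversely any comonad lifting forces $\ve$ and $\delta$ to be module morphisms and hence forces the two axioms.

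The main obstacle I anticipate is not any individual identity but the bookkeeping in the reduction to free modules: one must verify that testing the comonad-compatibility of $\lambda$ on the free $\bT$-modules $\phi_\bT X=(TX,m_X)$ suffices to guarantee that $\ve_A$ and $\delta_A$ are module morphisms for \emph{every} $(A,a)$. This uses naturality of $\delta,\ve,\lambda$ together with the fact that the action $a:(TA,m_A)\to(A,a)$ is itself a morphism of $\bT$-modules, so that a general module arises as a quotient of a free one. Once this reduction is secured, each of the four verifications is a routine diagram chase against the monad and comonad axioms.
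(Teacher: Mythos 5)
Your proposal is correct and follows essentially the same route as the paper, which records exactly the same two mutually inverse constructions ($\wG(X,h)=(G(X),\,G(h)\cdot\lambda_X)$ and $\lambda=U_\bT\ve_\bT\wG\phi_\bT\cdot TGe$) while deferring the verifications to Wolff's Theorem 2.2. Your additional two-stage decomposition (functor lifting first, then lifting $\delta$ and $\ve$ as module morphisms via faithfulness of $U_\bT$) is the standard way to fill in those verifications and is sound.
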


To obtain a lifting $\widehat{\bG}$ from a distributive law $\lambda$, one defines  for  $(X,h)\in \A_\bT$,
 $\widehat{G}(X,h)$ as the $\bT$-module
$$(G(X),\, TG(X)\xr{\lambda_X} GT(X) \xr{G(h)} G(X)).$$

Conversely, if one has a lifting comonad $\widehat{\bG}$, one defines
$\lambda : TG \to TG$ by
$$\xymatrix{ TG \ar@{-->}[rd]_\lambda\ar[r]^-{TGe} & TGT \ar@{=}[r] &TGU_\bT\phi_\bT TU_\bT\widehat{G}\phi_\bT \ar@{=}[r] &
U_\bT\phi_\bT U_\bT\widehat{G}\phi_\bT \ar[d]_-{U_\bT \ve_\bT\widehat{G}\phi_\bT}\\
 &GT \ar@{=}[r]&G U_\bT \phi_\bT \ar@{=}[r] &  U_\bT\widehat{G}\phi_\bT.}$$

\begin{thm}\label{lambda-bim}{\bf $\lambda$-bimodules.} \em
We write $\A^{\bG}_{\bT}(\lambda)$ (or just $\A^{\bG}_{\bT}$ when $\lambda$ is understood) for the category whose
objects are triples $(X, h, \theta)$, where $(X, h) \in
\A_{\bT}$ and $(X,\theta) \in \A^{\bG}$, with commuting diagram (e.g. \cite[5.7]{W})
$$
\xymatrix{
T(X) \ar[r]^-{h} \ar[d]_-{T(\theta)}& X \ar[r]^-{\theta}& G(X) \\
TG(X) \ar[rr]_-{\lambda_X}&& GT(X). \ar[u]_-{G(h)}}
$$

The assignment $(X, h, \theta) \to ((X, h), \theta)$  yields an isomorphism of categories
$$\A^{\bG}_{\bT}(\lambda) \simeq (\A _{\bT})^{\widehat{\bG}}.$$
\end{thm}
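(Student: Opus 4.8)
The plan is to show that, once both categories are unravelled, the regrouping assignment is literally the identity on underlying data, so that it is an isomorphism with an obvious inverse; the only real content lies in matching the entwining compatibility square with a lifted module-morphism condition.

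First I would unwind an object of $(\A_\bT)^{\widehat{\bG}}$. Such an object is a $\bT$-module $(X,h)$ together with a morphism $\vartheta : (X,h) \to \wG(X,h)$ in $\A_\bT$ subject to the $\widehat{\bG}$-coalgebra axioms. By the defining relation $G U_\bT = U_\bT \wG$ of the lifting, applying the forgetful functor sends $\vartheta$ to a morphism $\theta := U_\bT(\vartheta) : X \to G(X)$ in $\A$; conversely, since $U_\bT$ is faithful, a map $\theta : X \to G(X)$ in $\A$ underlies at most one $\bT$-module morphism into $\wG(X,h)$. Thus the datum $\vartheta$ amounts to a morphism $\theta : X \to G(X)$ subject to two requirements: that $\theta$ be $\bT$-linear as a map into $\wG(X,h)$, and that it obey the coalgebra axioms.

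The crux is the observation that, because $\wG(X,h)$ is by construction the $\bT$-module $(G(X),\, G(h)\cdot\lambda_X)$, the requirement that $\theta$ be a morphism of $\bT$-modules $(X,h) \to (G(X),\, G(h)\cdot\lambda_X)$ reads $\theta \cdot h = G(h)\cdot\lambda_X\cdot T(\theta)$, which is precisely the commuting square in the definition of a $\lambda$-bimodule. For the remaining conditions I would use that $U_\bT$ strictly intertwines the costructures, $U_\bT\widehat{\delta} = \delta U_\bT$ and $U_\bT\widehat{\ve} = \ve U_\bT$: evaluating at $(X,h)$ gives $U_\bT(\widehat{\ve}_{(X,h)}) = \ve_X$ and $U_\bT(\widehat{\delta}_{(X,h)}) = \delta_X$, while $U_\bT(\wG(\vartheta)) = G(\theta)$ by $U_\bT\wG = G U_\bT$. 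Since $U_\bT$ is faithful, the counit and coassociativity axioms for $\vartheta$ hold in $\A_\bT$ if and only if $\ve_X\cdot\theta = 1_X$ and $\delta_X\cdot\theta = G(\theta)\cdot\theta$ hold in $\A$, i.e.\ if and only if $(X,\theta)\in\A^\bG$. This yields a bijection on objects, realised by the stated regrouping $((X,h),\vartheta)\leftrightarrow(X,h,\theta)$.

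Finally I would treat morphisms. A morphism of $\widehat{\bG}$-coalgebras is a morphism $f:(X,h)\to(X',h')$ in $\A_\bT$ commuting with $\vartheta,\vartheta'$; applying the faithful $U_\bT$ and using $GU_\bT = U_\bT\wG$ once more, this compatibility is equivalent to $\theta'\cdot f = G(f)\cdot\theta$, so that $f$ is at once a $\bT$-module morphism and a $\bG$-coalgebra morphism, which is exactly a morphism in $\A^{\bG}_{\bT}(\lambda)$. Hence the assignment is the identity on underlying arrows and a bijection on both objects and hom-sets, and since it plainly preserves identities and composition it is an isomorphism of categories. The single nontrivial point is the identification in the crux step of the entwining square with $\bT$-linearity of $\theta$ into the lifted module $\wG(X,h)$; everything else is bookkeeping licensed by the faithfulness of $U_\bT$.
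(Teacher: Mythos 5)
Your proof is correct; the paper itself offers no argument for this statement, simply citing it as standard (to \cite[5.7]{W}), and your unwinding of the lifting data --- identifying $\bT$-linearity of $\theta$ into $\wG(X,h)=(G(X),G(h)\cdot\lambda_X)$ with the entwining square, and transporting the coalgebra axioms along $U_\bT\widehat{\delta}=\delta U_\bT$, $U_\bT\widehat{\ve}=\ve U_\bT$ using faithfulness of $U_\bT$ --- is exactly the standard verification being invoked. No gaps.
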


\begin{thm} \label{com-functor}{\bf Generalised Galois functor.} \em
With the data as given in Theorem \ref{wolff},
let $\lambda: TG \to GT$ be a mixed distributive law.
Given an adjunction $\nu, \varsigma : L \dashv R :\B \to \A$,
assume $K: \B \to (\A _{\bT})^{\widehat{\bG}}$ to be a functor with
$U^{\widehat{\bG}}K=\phi_\bT L$, i.e.\ with commutative diagram
\begin{equation}\label{tri}
\xymatrix{ \B \ar[rr]^{K} \ar[d]_{L} &&
(\A _{\bT})^{\widehat{\bG}} \ar[d]^{U^{\widehat{\bG}}}\\
\A   \ar[rr]^{\phi_\bT}&&\A_\bT .  }
\end{equation}
Write $\mathcal{G}'$ for the comonad on the category $\A_{\bT}$ generated by the adjunction
$$\phi_{\bT}L \dashv RU_{\bT}: \A_{\bT}\to \B$$ and write
$t_K : \mathcal{\bG}' \to \widehat{\bG}$
for the corresponding comonad morphism (see \ref{com-fun}).
\end{thm}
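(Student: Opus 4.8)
The plan is to recognise the statement as a direct instance of Theorem~\ref{com-fun}, applied with the ambient category $\A_\bT$, the comonad $\widehat{\bG}$ on $\A_\bT$ (which is a genuine comonad by Theorem~\ref{wolff}), and the composite adjunction $\phi_\bT L \dashv RU_\bT$ in place of the single adjunction $F \dashv R$ there. Once this identification is set up, the existence of the comonad $\mathcal{G}'$ and of the comonad morphism $t_K$ become immediate consequences, so the actual work is to assemble the composite adjunction and to match the square \eqref{tri} with diagram (i) of \ref{com-fun}.

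First I would compose the two given adjunctions. From $\nu,\varsigma: L\dashv R$ and the free--forgetful adjunction $\eta_\bT,\ve_\bT:\phi_\bT\dashv U_\bT$ of Theorem~\ref{monad-comon}, the standard composite of adjunctions yields
$$\phi_\bT L \dashv RU_\bT : \A_\bT \to \B,$$
whose unit is $R\eta_\bT L\cdot\nu: 1_\B \to RU_\bT\phi_\bT L$ and whose counit is
$$\ve':\ \phi_\bT L RU_\bT \xrightarrow{\ \phi_\bT\varsigma U_\bT\ } \phi_\bT U_\bT \xrightarrow{\ \ve_\bT\ } 1_{\A_\bT}.$$
This adjunction generates the comonad $\mathcal{G}'=(\phi_\bT L RU_\bT,\delta',\ve')$ on $\A_\bT$, with comultiplication $\delta'$ induced in the usual way by the composite unit; this shows $\mathcal{G}'$ is well defined.

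Next I would invoke Theorem~\ref{com-fun} verbatim, reading its ``$\A$'' as $\A_\bT$, its ``$\bG$'' as $\widehat{\bG}$, its left adjoint ``$F$'' as $\phi_\bT L$ and its right adjoint ``$R$'' as $RU_\bT$. The hypothesis $U^{\widehat{\bG}}K=\phi_\bT L$ of \eqref{tri} is precisely diagram (i) of \ref{com-fun} in this notation, so the correspondence (i)$\Leftrightarrow$(ii)$\Leftrightarrow$(iii) applies: part (ii) extracts from $K$ the left $\widehat{\bG}$-comodule structure $\kappa_{\phi_\bT L}:\phi_\bT L\to\wG\,\phi_\bT L$ determined by $K(b)=(\phi_\bT L(b),(\kappa_{\phi_\bT L})_b)$, and part (iii) returns the sought comonad morphism
$$t_K:\ \phi_\bT L RU_\bT \xrightarrow{\ \kappa_{\phi_\bT L} RU_\bT\ } \wG\,\phi_\bT L RU_\bT \xrightarrow{\ \wG\,\ve'\ } \wG ,$$
which is a morphism of comonads $\mathcal{G}'\to\widehat{\bG}$ on $\A_\bT$.

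There is no genuine obstacle here, since Theorem~\ref{com-fun} holds for an arbitrary adjunction and hence for the composite one with no further hypotheses; the content is entirely bookkeeping. The only points requiring care are confirming that the counit of $\phi_\bT L\dashv RU_\bT$ unwinds to $\ve_\bT\cdot\phi_\bT\varsigma U_\bT$, so that $t_K$ is expressed through the given data, and verifying that the identification of \eqref{tri} with diagram (i) is literal. Both are routine naturality checks, and the statement follows at once.
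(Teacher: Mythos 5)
Your proposal is correct and matches the paper's intent exactly: \ref{com-functor} is a setup paragraph whose only content is the instantiation of \ref{com-fun} at the composite adjunction $\phi_\bT L \dashv RU_\bT$ with the lifted comonad $\widehat{\bG}$ on $\A_\bT$, and your unwinding of the composite counit to $\ve_\bT\cdot\phi_\bT\varsigma U_\bT$ reproduces precisely the formula for $t_K$ that the paper uses later in the proof of Proposition \ref{galois}.
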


Applying Proposition \ref{mes} to the present situation gives:

\begin{theorem}\label{comparison} In the setting of \, {\rm\ref{com-functor}},
the functor $K: \B \to (\A_\bT)^{\widehat{\bG}}$ is an equivalence of categories if
and only if
\begin{itemize}
  \item [(i)]    the functor $\phi_\bT L : \B \to \A_\bT$ is comonadic and
  \item  [(ii)]  $\phi_\bT L$ is a $\widehat{\bG}$-Galois comodule functor.
\end{itemize}
\end{theorem}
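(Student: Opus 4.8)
The plan is to read the statement as a direct specialisation of Proposition \ref{mes}, obtained by replacing the base category $\A$ of \ref{com-fun} with the Eilenberg--Moore category $\A_\bT$ and the comonad $\bG$ with the lifted comonad $\widehat{\bG}$ on $\A_\bT$. The first step is to produce the adjunction that \ref{com-fun} requires. Composing the given $L \dashv R : \B \to \A$ with the free--forgetful adjunction $\phi_\bT \dashv U_\bT : \A \to \A_\bT$ yields an adjunction $\phi_\bT L \dashv R U_\bT : \A_\bT \to \B$, whose unit and counit are assembled in the standard way from $\nu, \varsigma$ and from $\eta_\bT, \ve_\bT$. By construction the comonad that this composite adjunction generates on $\A_\bT$ is precisely the comonad $\mathcal{G}'$ introduced in \ref{com-functor}.

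Next I would match the remaining data. The commutative square (\ref{tri}), that is $U^{\widehat{\bG}} K = \phi_\bT L$, is exactly the commutativity condition (i) of \ref{com-fun} for the functor $F := \phi_\bT L : \B \to \A_\bT$ relative to the comonad $\widehat{\bG}$. Hence the bijective correspondences of \ref{com-fun} apply verbatim in $\A_\bT$: the functor $K$ is equivalent to a left $\widehat{\bG}$-comodule structure $\kappa : \phi_\bT L \to \wG\, \phi_\bT L$ on $\phi_\bT L$, and equivalently to a comonad morphism $t_K : \mathcal{G}' \to \widehat{\bG}$, namely the composite $\phi_\bT L\, R U_\bT \xr{\kappa\, R U_\bT} \wG\, \phi_\bT L\, R U_\bT \xr{\wG \sigma} \wG$, where $\sigma$ is the counit of $\phi_\bT L \dashv R U_\bT$. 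One checks that this is the very morphism $t_K$ named in \ref{com-functor}, and that the Galois condition of \ref{com-fun}, namely that $t_K$ be an isomorphism, is what \emph{$\widehat{\bG}$-Galois} means here.

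With this dictionary in place the conclusion is immediate. Proposition \ref{mes} states that $K$ is an equivalence of categories if and only if $F$ is comonadic and $t_K$ is an isomorphism. Substituting $F = \phi_\bT L$ gives precisely the two conditions (i) $\phi_\bT L$ is comonadic and (ii) $\phi_\bT L$ is $\widehat{\bG}$-Galois, which is the asserted statement.

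Because all the mathematical content is already carried by Proposition \ref{mes}, I expect no deep obstacle; the only care required is bookkeeping. Concretely, I would need to confirm that the comonad generated by $\phi_\bT L \dashv R U_\bT$ agrees on the nose with $\mathcal{G}'$, so that the morphism $t_K$ produced by \ref{com-fun} is literally the one appearing in \ref{com-functor}, and that the notions ``left $\widehat{\bG}$-comodule functor'' and the associated Galois property are consistently interpreted in $\A_\bT$ rather than in $\A$. These verifications are routine, so the sole, minor point of attention is ensuring the identifications of $\mathcal{G}'$ and of $t_K$ are the intended ones.
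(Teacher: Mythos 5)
Your proposal is correct and is exactly the paper's argument: the theorem is stated there as an immediate application of Proposition \ref{mes} to the adjunction $\phi_\bT L \dashv R U_\bT$ and the comonad $\widehat{\bG}$ on $\A_\bT$, with the same identifications of $\mathcal{G}'$ and $t_K$ that you spell out. Your bookkeeping (e.g.\ noting that $\wG\sigma$ for the composite counit $\sigma$ unpacks to $\wG\ve_\bT\cdot\wG\phi_\bT\varsigma U_\bT$) is consistent with how the paper uses $t_K$ later in Proposition \ref{galois}.
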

The following proposition gives a necessary and sufficient condition for the
functor $\phi_\bT L$ to be $\widehat{\bG}$-Galois
(generalising \cite[Proposition 2.10]{MW-Op}).

\begin{proposition}\label{galois} In the setting of \, {\rm\ref{com-functor}},
$\phi_\bT L$ is $\widehat{\bG}$-Galois if and only if the natural transformation $t_K\phi_\bT$ is an isomorphism.
\end{proposition}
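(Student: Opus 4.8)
Because $\phi_\bT L$ is, by the definition recalled in \ref{com-fun}, a $\widehat{\bG}$-Galois comodule functor exactly when the comonad morphism $t_K : \mathcal{G}' \to \widehat{\bG}$ is an isomorphism, the proposition amounts to the claim that $t_K$ is invertible if and only if the whiskered transformation $t_K\phi_\bT$ is invertible. One direction is free of content: whiskering an isomorphism of functors with $\phi_\bT$ again yields an isomorphism, so if $\phi_\bT L$ is $\widehat{\bG}$-Galois then $t_K\phi_\bT$ is certainly invertible. The plan is therefore to establish the converse, and the single idea driving it is that every object of $\A_\bT$ is canonically a coequalizer of free $\bT$-modules.

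Recall that for $(X,h)\in\A_\bT$ the reflexive pair
$$\xymatrix@C=3.2em{\phi_\bT TX \ar@<0.6ex>[r]^-{\ve_\bT\phi_\bT X} \ar@<-0.6ex>[r]_-{\phi_\bT(h)} & \phi_\bT X \ar[r]^-{h} & (X,h),}$$
with reflexivity witnessed by $\phi_\bT(eX)$, is sent by the monadic functor $U_\bT$ to the canonical split coequalizer of the pair $m_X,\,Th:TTX\to TX$ with quotient map $h$; hence it is a $U_\bT$-split coequalizer in $\A_\bT$. First I would verify that both endofunctors $\mathcal{G}'$ and $\widehat{\bG}$ of $\A_\bT$ preserve this coequalizer. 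For $\mathcal{G}'=\phi_\bT L R U_\bT$ one has $U_\bT\mathcal{G}'=TLRU_\bT$, and for the lifting $\widehat{\bG}$ one has $U_\bT\widehat{\bG}=GU_\bT$ by \ref{mixed}; applying $U_\bT$ to the image of the displayed diagram under either functor therefore produces $TLR$, respectively $G$, applied to a split coequalizer. Since split coequalizers are absolute, the outcome is again split, so in each case the image pair is $U_\bT$-split. As $U_\bT$ creates coequalizers of $U_\bT$-split pairs, it follows that $\mathcal{G}'(h)$ and $\widehat{\bG}(h)$ are coequalizers, i.e.\ $\mathcal{G}'$ and $\widehat{\bG}$ carry the canonical presentation of $(X,h)$ to coequalizer diagrams in $\A_\bT$.

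With this preservation in hand the conclusion is formal. Whiskering $t_K$ against the canonical presentation produces a commuting ladder whose two rows are the coequalizers just obtained and whose two left-hand vertical maps are the components of $t_K\phi_\bT$ at $TX$ and at $X$, both isomorphisms by hypothesis. The induced comparison of the two coequalizer objects, which by naturality is exactly the component $(t_K)_{(X,h)}$, is then an isomorphism. Since $(X,h)$ is arbitrary, $t_K$ is a natural isomorphism, that is, $\phi_\bT L$ is $\widehat{\bG}$-Galois.

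I expect the only genuinely delicate point to be the preservation of the canonical coequalizer by the lifting $\widehat{\bG}$: unlike $\mathcal{G}'$, this functor is not visibly of the form $\phi_\bT(-)U_\bT$, so one must route the argument through the identity $U_\bT\widehat{\bG}=GU_\bT$ together with the monadicity of $U_\bT$ and the absoluteness of split coequalizers. Everything after the preservation statement is the standard \emph{free objects generate $\A_\bT$} argument and requires no further input.
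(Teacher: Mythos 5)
Your proof is correct. The paper reaches the same conclusion by a shorter, less self-contained route: it writes out $U_\bT t_K$ explicitly as a natural transformation $TLRU_\bT \to GU_\bT$ (using the identity $U_\bT\widehat{\bG}=GU_\bT$ to put both source and target in the form $PU_\bT$), invokes \cite[Lemma 2.19]{BLV} -- which asserts that such a transformation is invertible as soon as its restriction along $\phi_\bT$ is -- and then concludes that $t_K$ itself is invertible because $U_\bT$ is conservative. Your canonical-presentation argument is in substance the proof of that cited lemma, but carried out upstairs in $\A_\bT$ (every $\bT$-module as a $U_\bT$-split coequaliser of free modules, preserved by $\mathcal{G}'$ and $\widehat{\bG}$ because split coequalisers are absolute and $U_\bT$ creates coequalisers of $U_\bT$-split pairs) rather than downstairs in $\A$. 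What your version buys is self-containedness; what it costs is exactly the verification you flag as delicate, namely that $\widehat{\bG}$ preserves the presentation, which the identity $U_\bT\widehat{\bG}=GU_\bT$ does settle as you say. Both arguments hinge on the same single fact, the absoluteness of the canonical split presentation of a $\bT$-module, so I would count them as the same proof differently packaged.
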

\begin{proof} One direction is clear, so suppose that $t_K\phi_\bT$ is an isomorphism.

Let $\kappa: \phi_\bT L \to \wG \phi_\bT L$ be the left $\wG$-comodule structure on $\phi_\bT L$
corresponding to the diagram (\ref{tri}). Then, by \ref{com-fun}, $t_K : \mathcal{G}' \to \widehat{\bG}$ is the composite
$$\phi_\bT L RU_\bT \xr{\kappa RU_\bT}\wG \phi_\bT LRU_\bT \xr{\wG\phi_\bT \varsigma U_\bT} \wG\phi_\bT U_\bT \xr{\wG\ve_{\!\bT}} \wG.$$
Consider the natural transformation $U_\bT t_K$
$$U_\bT\phi_\bT L RU_\bT\xr{U_\bT\kappa RU_\bT}U_\bT \wG \phi_\bT LRU_\bT \xr{U_\bT \wG \phi_\bT \varsigma U_\bT} U_\bT \wG \phi_\bT U_\bT \xr{U_\bT \wG \ve_{\!\bT}} U_\bT\wG,$$
and, using  $U_\bT \wG= G U_\bT$,   rewrite it as
$$ U_\bT \phi_\bT L RU_\bT \xr{U_\bT \kappa RU_\bT} G U_\bT \phi_\bT LRU_\bT \xr{G U_\bT \phi_\bT \varsigma U_\bT} G U_\bT \phi_\bT U_\bT  \xr{G U_\bT \ve_{\!\bT}} G U_\bT. $$
By \cite[Lemma 2.19]{BLV},  if $U_\bT t_K\phi_\bT$ is an isomorphism, then $U_\bT t_K$ is so. But since $U_\bT$ is conservative,
$t_K$ is an isomorphism, too. This completes the proof.
\end{proof}

In view of Theorem \ref{comparison}, it is desirable to find sufficient conditions for
the composite $\phi_\bT L$ to be comonadic. The next proposition gives two such conditions.

\begin{proposition}\label{comonadicity} In the setting of {\rm\ref{com-functor}}, suppose that $\A$ is Cauchy complete and  $L$ is comonadic. Then
the composite $\phi_\bT L$ is comonadic under any of the conditions
\begin{rlist}
  \item   the unit $e:I\to T$ is a split monomorphism, i.e. there is a natural
   transformation $e': T \to I$ with $e'e=1$;
  \item   the monad $\bT$ is of effective descent type
    ($\phi_\bT  : \A \to \A_\bT$ is comonadic)
  and $\A$ has and $LR$ and $LR LR$ preserve equalisers of coreflexive $T$-split pairs.
\end{rlist}
\end{proposition}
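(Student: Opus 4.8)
The plan is to apply the dual of Beck's comonadicity theorem to $\phi_\bT L:\B\to\A_\bT$. This functor has a right adjoint, namely the composite $RU_\bT$ (so that the generated comonad is $\bG'$), and by the dual Beck criterion it is comonadic as soon as (a) it is conservative and (b) $\B$ has, and $\phi_\bT L$ preserves, equalisers of those coreflexive parallel pairs that $\phi_\bT L$ carries to split equalisers. Conservativity is the easy part. Under (ii) both $\phi_\bT$ and $L$ reflect isomorphisms (being comonadic), so their composite does too. Under (i), the naturality squares of $e$ and $e'$ exhibit every $g$ as a retract of $Tg$ in the arrow category, and since $e'e=1$ a short chase gives that $Tg$ invertible forces $g$ invertible; applied to $g=Lf$ (noting that a morphism of $\A_\bT$ is invertible iff its underlying $\A$-morphism is) and combined with conservativity of the comonadic $L$, this yields (a).

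For (b) I would first transport the pair down to $\A$. Let $f,g$ be a coreflexive pair in $\B$ whose image under $\phi_\bT L$ admits a split equaliser in $\A_\bT$. Applying $U_\bT$ and using that split equalisers are absolute, the pair $U_\bT\phi_\bT Lf,U_\bT\phi_\bT Lg=TLf,TLg$ acquires a split equaliser; thus $Lf,Lg$ is a coreflexive $T$-split pair in $\A$. The equaliser of $f,g$ is now to be built in two stages: first in $\A$, then lifted to $\B$.

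Under (ii): by hypothesis $\A$ has the equaliser of the coreflexive $T$-split pair $Lf,Lg$, and since $\phi_\bT$ is comonadic it preserves equalisers of $T$-split pairs, so it preserves this one. To lift it to $\B$ I would use comonadicity of $L$ to identify $\B$ with the coalgebra category $\A^{LR}$ in such a way that $L$ becomes the forgetful functor. The standard limit-creation theorem for coalgebras — the forgetful functor creates those limits that exist in $\A$ and are preserved by $LR$ and by $LRLR$ — applies precisely because (ii) assumes $LR$ and $LRLR$ preserve equalisers of coreflexive $T$-split pairs; it delivers the equaliser of $f,g$ in $\B$ together with its preservation by $L$. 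Composing the two preservations shows $\phi_\bT L$ preserves $\mathrm{eq}(f,g)$, and (b) follows. Under (i): the same pair $(Lf,Lg)$ is a retract of $(TLf,TLg)$ via $e,e'$, and the latter carries a split, hence absolute, equaliser; Cauchy completeness of $\A$ lets me split the resulting idempotent to obtain an absolute equaliser of $Lf,Lg$. Being absolute, it is preserved by every functor, in particular by $\phi_\bT$, $L$, $LR$ and $LRLR$, so the preservation hypotheses invoked in case (ii) now hold automatically and the argument goes through unchanged (re-proving en route that $\phi_\bT$ is comonadic).

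The main obstacle is the lifting step in (b): moving the equaliser from $\A$ up to $\B$. This is exactly where comonadicity of $L$ is indispensable and where $LR$ and $LRLR$ enter, as the endofunctor of the comonad $LR$ and its square, whose action on the relevant equaliser the coalgebra limit-creation theorem must control. The delicate point is therefore to guarantee that the coreflexive $T$-split pair $Lf,Lg$ has an equaliser preserved by these two functors — assumed outright in (ii), and reduced in (i) to the absoluteness of split equalisers together with idempotent splitting in the Cauchy-complete category $\A$.
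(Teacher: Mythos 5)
Your proof is correct and follows essentially the same route as the paper: the dual Beck criterion applied to $\phi_\bT L$, with conservativity plus creation/preservation of equalisers of coreflexive $\phi_\bT L$-split pairs, handling case (i) by showing the transported pairs acquire split (absolute) equalisers in the Cauchy complete $\A$ via the split unit, and case (ii) by combining comonadicity of $\phi_\bT$ with the limit-creation theorem for the comonadic $L$ (which is exactly where the $LR$, $LRLR$ hypotheses enter). The only difference is that you re-derive from first principles the contractibility and creation lemmas that the paper simply cites from Mesablishvili and Borceux.
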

\begin{proof}(i)  Since
\begin{itemize}
  \item $\A$ is Cauchy complete,
  \item $e:1 \to T$ is a split monomorphism, and
  \item $e$ can be seen as the unit of the adjunction $\phi_\bT \dashv U_\bT$,
\end{itemize} it follows from \cite[Proposition 3.14]{Me} that any $\phi_\bT$-split
pair is part of a split equaliser in $\A$, and thus the functor $\phi_\bT$ creates
equalisers of
$\phi_\bT$-split pairs as split equalisers in $\A$, from which it follows that
$\phi_\bT L$ is comonadic whenever $L$ is so.

(ii)  Since the functor $L$ is assumed to be comonadic, to say that
 $\A$ has and $LR$ and $LRLR$
preserve equalisers of coreflexive $T$-split pairs is to say
that the functor $L$ creates equalisers of those pairs whose image under $L$ is
part of a $T$-split
equaliser (see, for example, \cite[Proposition 4.3.2]{Bo}).
Since $\A$ has and $T$ preserves equalisers of
coreflexive $T$-split pairs if and only if $\A$ has and $\phi_\bT$
preserves equalisers of coreflexive $\phi_\bT$-split
pairs (\cite[Proposition 3.11]{Me}), and since $\bT$ is of
effective descent type by hypothesis, it
follows that $\B$ has and the composite $\phi_\bT L$ preserves
equalisers of coreflexive $\phi_\bT L$-split pairs.
Using now the fact that $\phi_\bT L$, being a composite of two conservative functors,
 is conservative,
the result follows from the dual of Beck's monadicity theorem (see \cite{Mc}).
\end{proof}

For later use (in \ref{split1}) we prove the following technical observation.

\begin{proposition}\label{split.comon} Let $\A$ be Cauchy complete and
  ${L \dashv R:\A \to \B}$  an adjunction whose unit is a split monomorphism.
Then, in any commutative (up to isomorphism) diagram
$$
\xymatrix{
\A \ar[r]^-{F} \ar[dr]_-{R} & \mathbb{E} \ar[d]^-{K}\\
& \B , }
$$
\begin{rlist}
\item
the functor $F:\A \to \mathbb{E}$ is conservative;
\item any coreflexive $F$-split pair of morphisms has a split equaliser in $\A$;
\item the functor $F$ is comonadic if and only if it has a right adjoint.
\end{rlist}
\end{proposition}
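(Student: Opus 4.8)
The plan is to reduce all three assertions to the corresponding properties of the left adjoint $L$ of the given adjunction and then transport them across the natural isomorphism $KF\cong L$ supplied by the commutative triangle. The whole difficulty is conceptual rather than computational: since $F$ is not itself assumed to be a left (or right) adjoint, the split\-monomorphism hypothesis cannot be fed to $F$ directly. It is $L$ that carries a right adjoint $R$ with split\-mono unit, so it is to $L$ that the descent machinery applies, and the factorisation $KF\cong L$ together with the absoluteness of split equalisers is what lets one pull the conclusions back to $F$. Throughout I would write $\eta\colon\mathrm{id}_\A\to RL$ for the unit of $L\dashv R$ and fix a retraction $\eta'\colon RL\to\mathrm{id}_\A$ with $\eta'\eta=\mathrm{id}$.

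For (i) I would first note that $L$ is conservative: for a morphism $h$ in $\A$, naturality of $\eta$ and of $\eta'$ exhibits $h$ as a retract, in the arrow category of $\A$, of $RLh$ (the two horizontal sections being $\eta$ and $\eta'$); since a retract of an isomorphism is an isomorphism, $Lh$ invertible forces $RLh$, and hence $h$, invertible. This step uses only that $\eta$ is split mono, not Cauchy completeness. Now if $Fh$ is invertible then so is $Lh$, because it is conjugate to $KFh$ via $KF\cong L$; thus $h$ is invertible and $F$ is conservative.

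For (ii), let $(f,g)$ be a coreflexive pair in $\A$ whose image under $F$ is part of a split equaliser in $\mathbb{E}$. Applying $K$ and recalling that split equalisers are absolute, $(KFf,KFg)$ is part of a split equaliser in $\B$; conjugating by the components of $KF\cong L$ turns this into a split equaliser for $(Lf,Lg)$, so $(f,g)$ is a coreflexive $L$\-split pair. Since $\A$ is Cauchy complete and the unit of $L\dashv R$ is a split monomorphism, \cite[Proposition 3.14]{Me} applies to $L$ and yields that $(f,g)$ is itself part of a split equaliser in $\A$.

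Finally, for (iii), one implication is immediate, as a comonadic functor has a right adjoint by definition. For the converse I would assume $F$ has a right adjoint and invoke the dual of Beck's (crude) comonadicity theorem (see \cite{Mc}): by (i) the functor $F$ is conservative, and by (ii) every coreflexive $F$\-split pair has a split—hence absolute, and so $F$\-preserved—equaliser in $\A$, so that $\A$ has and $F$ preserves equalisers of such pairs; this is exactly the hypothesis needed to conclude that $F$ is comonadic. The only delicate point in the argument is the transport in (ii), namely checking that \emph{being part of a split equaliser} survives passage through $K$ and the isomorphism $KF\cong L$; this is precisely where absoluteness of split equalisers is used, while the one genuinely substantive input is the split\-equaliser descent of \cite[Proposition 3.14]{Me}.
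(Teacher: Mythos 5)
Your argument is correct and follows the same overall strategy as the paper: transport everything along the isomorphism given by the triangle to the adjoint functor $\A\to\B$, apply Mesablishvili's split--unit descent results there, and finish (iii) with the dual of Beck's theorem plus the absoluteness of split equalisers. One remark on labels: you write $KF\cong L$, whereas the statement's triangle is labelled with $R$; since the hypotheses only bite if the functor in the triangle is the one possessing a right adjoint with split monomorphic unit based at $1_\A$ (the paper's own proof treats it exactly so, invoking \cite[Proposition 3.16]{Me} to make it comonadic), your reading agrees in substance with the paper's proof, and the statement's $L\dashv R$ labelling is simply inconsistent with its diagram. Beyond that, the differences are only in which auxiliary facts are cited. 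In (i) you prove conservativity of the adjoint directly by the retract-in-the-arrow-category argument, while the paper deduces it from comonadicity via \cite[Proposition 3.16]{Me}; your version is more elementary and does not even need Cauchy completeness for this part. In (ii) you transport the property of being part of a split equaliser through $K$ (legitimate, since split equalisers are absolute) and then quote \cite[Proposition 3.14]{Me} for the descent step, whereas the paper transports the weaker, equationally defined property of being a contractible pair and then combines \cite[Proposition 3.8]{Me} with Barr's observation \cite{B} that contractible pairs in a Cauchy complete category admit split equalisers; both routes are sound and rest on the same separability/Cauchy-completeness input. Part (iii) is argued identically in both.
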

 \begin{proof} (i) Since $\A$ is Cauchy complete and since the unit of the adjunction
is a split monomorphism, the
functor $R$ is comonadic (e.g., \cite[Proposition 3.16]{Me}) and, in particular,
conservative. This implies -- since
$KF$ is isomorphic to $R$ -- that  $F$ is conservative, too.

(ii) Suppose that $\xymatrix{ X \ar@<2pt>[r]^f \ar@<-2pt>[r]_g& Y }$ is
an $F$-split pair of morphisms in $\A$.
Then the morphisms $F (f)$ and $F(g)$ have a split equaliser in $\mathbb{E}$,
so that the pair $(F (f), F (g))$ is
\emph{contractible} (see \cite{Mc}). Since contractible pairs, being
equationally defined, are preserved by any functors,
 the pair $(R(f), R(g))$, being isomorphic to the pair $(KF(f), KF(g))$, is also
contractible. But since
$\A$ is Cauchy complete and since the unit of the adjunction is a split
monomorphism (which just means that
the functor $R$ is $1_\A$-\emph{separable}), it follows from \cite[Proposition 3.8]{Me}
 that the pair $(f,g)$
is contractible, too.
Then,  $\A$ being Cauchy complete, $f$ and $g$ have a
split equaliser (e.g. \cite{B}) and this equaliser is clearly preserved by $F$.
\smallskip

(iii) follows from the fact that  split equalisers are preserved by any functor.
\end{proof}

\section{Generalised Hopf modules}\label{g-Hopf}

In \cite{A}, Aguiar and Chase studied generalised Hopf modules in monoidal
categories and proved a {\em Fundamental Theorem} for them.
In this section we show how this result can be obtained as a special case of
our approach.
We first recall  elementary facts about modules and comodules in a monoidal category
(e.g. \cite{M}, \cite{Pa}).

\begin{thm}\label{mon-comon}{\bf Monoids and comonoids in monoidal categories.} \em
Let $(\V, \otimes, I, a, l, r)$ be a monoidal category, where  $a, l, r$ are the
associativity, left identity,
and right identity isomorphisms for the monoidal structure on $\V$.

A \emph{monoid} in  $\V$ consists of an object $A$ of $\V$ endowed with a
multiplication $m:A \otimes A \to A$  and a unit morphism $e : I \to A$ such
that the usual identity
and associative conditions are satisfied. A monoid morphism $f:A \to A'$ is a morphism
in $\V$  preserving $m$ and $e$. The category of monoids in $\V$ is denoted by
$\text{\textbf{Mon}}(\V)$.

Given a monoid $(A, e_A, m_A)$ in $\V$, a \emph{left} $A$-\emph{module} is a
pair $(V, \rho_V)$, where $V$ is
an object of $\V$ and $\rho_V: A \otimes V \to V$ is a morphism in $\V$,
called the \emph{$A$-action} on $V$, such that
$\rho_V(m_A \otimes V)a_{A,A,V}=\rho_V(A\otimes\rho_V)$ and $\rho_V (e_A\otimes V)=l_V$.

For any monoid $A$ in $\V$, the left $A$-modules are the objects of a
category ${_{A}\!\V}$.
A morphism $f:(V,\rho_V)\to (W,\rho_W)$ is a morphism $f: V \to W$ in $\V$ such that
$\rho_W (A \otimes f)=f\rho_V$.
Analogously, one has the category $\V\!_{A}$ of right $A$-modules.

Let $A$ and $B$ be two monoids in $\V$. An object $V$ in $\V$ is called an
$(A,B)$-\emph{bimodule} if there are morphisms $\rho_V: A \otimes V \to V$ and
$\varrho_V: V \otimes B \to V$ in $\V$ such that
$(V, \rho_V) \in {_{A}\!\V}$, $(V, \varrho_V) \in \V\!_{B}$ and
$\varrho_V(\rho_V\otimes B)=\rho_V (A \otimes \varrho_V)a_{A,V,B}$.
A morphism of $(A,B)$-bimodules is a morphism in $\V$ which is a morphism
of both the left $A$-modules and right
$B$-modules. Write $_{A}\!\V\!_{B}$ for the corresponding category.

\emph{Comonoids} and (\emph{left, right, bi-}) \emph{comodules} in $\V$ can
be defined as monoids
and left (right, bi-) modules in the \emph{opposite} monoidal category
$(\V^{\text{op}}, \otimes, I, a^{-1}, l^{-1}, r^{-1})$.
The resulting categories are denoted by $\text{\textbf{Comon}}(\V)$, $^C\V$, $\V^C$
and $^C\V^{C'}$, $C$ and $C'$ being comonoids in $\V$.
\end{thm}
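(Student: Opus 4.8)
The statement of \ref{mon-comon} consists almost entirely of stipulative definitions, so the only portion carrying genuine proof content is its final paragraph, which asserts that comonoids and comodules in $\V$ may be introduced as monoids and modules in the opposite monoidal category $(\V^{\text{op}}, \otimes, I, a^{-1}, l^{-1}, r^{-1})$. For this to be legitimate one must know that the listed data really do assemble into a monoidal category; once that is established, the definitions of comonoid, comodule and bicomodule, together with the categories $\text{\textbf{Comon}}(\V)$, $^C\V$, $\V^C$ and $^C\V^{C'}$, are just the definitions already given in the earlier paragraphs, transported verbatim from $\V$ to $\V^{\text{op}}$. The plan is therefore to verify that $\V^{\text{op}}$ is monoidal and then to unwind the transported notions as a consistency check against the familiar comultiplication/counit description.

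First I would observe that the tensor functor $\otimes \colon \V \times \V \to \V$ is covariant in each variable, so it induces a functor $\otimes \colon \V^{\text{op}} \times \V^{\text{op}} \to \V^{\text{op}}$ on the opposite categories, with the same unit object $I$. Reversing arrows sends each coherence isomorphism to its inverse, so $a$, $l$, $r$ become the natural isomorphisms $a^{-1}$, $l^{-1}$, $r^{-1}$ of $\V^{\text{op}}$; naturality survives because a commuting naturality square in $\V$ remains commuting when every arrow in it is reversed.

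The one genuine verification is the coherence of $(a^{-1}, l^{-1}, r^{-1})$, that is, the pentagon and triangle identities in $\V^{\text{op}}$. Each such identity is an equality between two composites of structure isomorphisms; passing to $\V^{\text{op}}$ reverses the order of composition and replaces every factor by its inverse, and the inverse of the pentagon (respectively triangle) equation for $(a,l,r)$ is precisely the pentagon (respectively triangle) equation for $(a^{-1}, l^{-1}, r^{-1})$. I expect this arrow-reversal bookkeeping, namely keeping the orientation of each isomorphism and the order of the composites straight, to be the only place demanding any care, and it is entirely routine. With $\V^{\text{op}}$ confirmed monoidal, $\text{\textbf{Mon}}(\V^{\text{op}})$ and the categories of modules over its monoids are well defined, and by transport these are exactly $\text{\textbf{Comon}}(\V)$ and the comodule categories.

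Finally, as a sanity check I would unwind the transported definitions. A monoid $(A, m, e)$ in $\V^{\text{op}}$ is an object $A$ equipped with morphisms $m \colon A \to A \otimes A$ and $e \colon A \to I$ in $\V$, and the associativity and unit conditions of \ref{mon-comon}, read in $\V^{\text{op}}$, become the coassociativity and counit conditions in $\V$; thus a monoid in $\V^{\text{op}}$ is precisely a comonoid in $\V$. The same unwinding turns a left (right, bi-) module over such a monoid into a left (right, bi-) comodule over the corresponding comonoid, and a monoid (module) morphism in $\V^{\text{op}}$ into a comonoid (comodule) morphism in $\V$, producing the stated categories $^C\V$, $\V^C$ and $^C\V^{C'}$. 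No further argument is required.
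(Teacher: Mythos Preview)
Your analysis is correct, but note that the paper gives no proof whatsoever for \ref{mon-comon}: it is presented purely as a block of standard definitions and notational conventions, with the final paragraph simply declaring (without verification) that comonoids and comodules are obtained by passing to $(\V^{\text{op}}, \otimes, I, a^{-1}, l^{-1}, r^{-1})$. Your proposal therefore goes strictly beyond the paper by actually checking that this opposite structure is monoidal and that the transported notions unwind to the usual comultiplication/counit axioms; this is all sound and standard, so there is nothing to object to.
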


\begin{thm}\label{tensor}{\bf Tensor product of modules.} \em
If $A$ is a monoid  in $\V$, $(V,\varrho_V) \in \V_{A}$ a right
$A$-module, and $(W,\rho_W) \in {_{A}\!\V}$ a left $A$-module, then their
\emph{tensor product} (\emph{over} $A$) is defined as the object part of the
 coequaliser (if this exists)
\begin{equation}\label{tensorD}
{\xymatrix{(V \otimes A) \otimes W \ar@/_1.6pc/[rr]_-{\varrho_V \otimes W}
\ar[r]^-{a_{V,A,W}} &
V \otimes  (A \otimes W)\ar[r]^-{V \otimes \rho_W}& V  \otimes  W
\ar[r]^-{\text{can}}& V  \otimes_A W.}}
\end{equation}

Given  another left $A$-module $(W', \rho_{W'})$ for which  $V  \otimes_A W'$ exists,
and a morphism $f:W \to W'$ of left $A$-modules, we form the diagram
$${\xymatrix{ (V \otimes  A) \otimes  W \ar[d]|-{(V \otimes  A) \otimes f}
\ar@/^1.6pc/[rr]^-{\varrho_V \otimes W}
 \ar[r]_-{a_{V,A,W}} & V \otimes  (A \otimes  W) \ar[d]|{V \otimes (A \otimes f)}
\ar[r]_-{V \otimes \rho_W}  & V
 \otimes  W \ar[d]|-{V  \otimes f}\ar[r]^-{\text{can}}&
V \otimes_A W \ar@{..>}[d]|-{V  \otimes_A  f}\\
(V \otimes  A) \otimes  W' \ar@/_1.6pc/[rr]_-{\varrho_V \otimes W'}
 \ar[r]^-{a_{V,A,W'}} &
V \otimes (A \otimes  W')\ar[r]^-{V \otimes \rho_{W'}}&
V\otimes  W' \ar[r]_-{\text{can}}& V  \otimes_A W',}}$$
 in which
\begin{itemize}
  \item  $(V\otimes f)(\varrho_V\otimes W)=(\varrho_V \otimes W')((V\otimes A)\otimes f)$
by functoriality of $\otimes$,
  \item  the left square commutes by naturality of $a$, and
  \item the middle square commutes because $f$ is a morphism of left $A$-modules;
\end{itemize}
from this one sees that there ia a unique morphism
${V\otimes_A f:V\otimes_A W \to V\otimes_A W'}$ making the right square commute.
It is easy to see that if for ${W''\in {_{A}\V}}$, the tensor product
${V  \otimes_A W'}$ exits, then for any
morphism $g:W' \to W''$ in ${_{A}\V}$,
$${V  \otimes_A  (gf)=(V  \otimes_A  g)(V  \otimes_A  f)}.$$

If $B$ is another monoid in $\V$ such that the functors
$B \otimes -, B \otimes (B \otimes -):\V \to \V$ both preserve the equaliser
(\ref{tensorD})
and if $V \in {_B\V_A}$, then the tensor product $V  \otimes_A  W$
has the structure of a left $B$-module such that
$\text{can}:V \otimes W \to V \otimes_A W$ becomes a morphism of left $B$-modules.
Moreover, if these functors also preserve the equaliser defining $V \otimes_A W'$,
then $V \otimes_A  f$ also becomes a left $B$-module morphism.

Recall (for example, from \cite{Pa}) that the forgetful functor
$${_{A}U} \colon {_{A}\!\V} \to \V,\,\; (V,\rho_V) \mapsto V,$$
is right adjoint, with the left adjoint ${_{A}\phi}:\V \to {_{A}\!\V}$ sending
each $V \in \V$ to the "free" left $A$-module
$$(A\otimes V,\, A\otimes (A \otimes V)\xr{a_{A,A,V}}(A\otimes A)\otimes V
\xr{m \otimes V} V\otimes V).$$

Write ${_A\!\bT}$ for the monad on $\V$ generated by the adjunction
${{_A \phi} \dashv {_A \!U} : {_A\!\V} \to \V}$.
It is well known that the corresponding Eilenberg-Moore category $\V\!_{_{_A}\!\!\bT}$
of ${_A\!\bT}$-modules is exactly the category $_{A}\!\V$ of left $A$-modules.
\end{thm}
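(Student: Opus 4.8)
The plan is to derive every embedded assertion from the universal property of the defining coequaliser (\ref{tensorD}), exploiting throughout that $\text{can}$ is an epimorphism. For the existence of $V\otimes_A f$, I would first record the three compatibilities visible in the ladder: the outer comparison $(V\otimes f)(\varrho_V\otimes W)=(\varrho_V\otimes W')((V\otimes A)\otimes f)$ by functoriality of $\otimes$, the left square by naturality of $a$, and the middle square by left $A$-linearity of $f$, i.e.\ $\rho_{W'}(A\otimes f)=f\rho_W$. Reading across, these say that $\text{can}'\,(V\otimes f)\colon V\otimes W\to V\otimes_A W'$ coequalises the parallel pair $\big(\varrho_V\otimes W,\ (V\otimes\rho_W)\,a_{V,A,W}\big)$, where $\text{can}'$ is the canonical morphism for $W'$. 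The universal property of $\text{can}\colon V\otimes W\to V\otimes_A W$ then produces a unique $V\otimes_A f$ with $(V\otimes_A f)\,\text{can}=\text{can}'\,(V\otimes f)$, which is exactly commutativity of the right square.

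For the composition law I would use that $\text{can}$ is epi. Writing $\text{can}''$ for the canonical map of $W''$, one computes $(V\otimes_A g)(V\otimes_A f)\,\text{can}=(V\otimes_A g)\,\text{can}'\,(V\otimes f)=\text{can}''\,(V\otimes g)(V\otimes f)=\text{can}''\,(V\otimes gf)=(V\otimes_A(gf))\,\text{can}$, using functoriality of $\otimes$ and the defining relation for each $V\otimes_A(-)$; cancelling $\text{can}$ gives $V\otimes_A(gf)=(V\otimes_A g)(V\otimes_A f)$, and $V\otimes_A 1_W=1$ is immediate from uniqueness.

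The substantive part is the left $B$-module structure, so assume $V\in{_B\V_A}$ and that $B\otimes-$ preserves (\ref{tensorD}). Then $B\otimes\text{can}$ is again a coequaliser, this time of the pair $B\otimes(-)$. I would check that the composite $\text{can}\,(\rho_V\otimes W)\,a^{-1}_{B,V,W}\colon B\otimes(V\otimes W)\to V\otimes_A W$ coequalises $B\otimes(-)$ applied to that pair; the only nontrivial branch is the one through $\varrho_V\otimes W$, and it is settled by the bimodule compatibility $\varrho_V(\rho_V\otimes A)=\rho_V(B\otimes\varrho_V)\,a_{B,V,A}$ together with coherence for $a$. Factoring through the coequaliser $B\otimes\text{can}$ yields the action $\rho_{V\otimes_A W}\colon B\otimes(V\otimes_A W)\to V\otimes_A W$, and by construction $\text{can}$ is then left $B$-linear.

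The hard part will be the module axioms, and this is exactly where the hypothesis that $B\otimes(B\otimes-)$ preserve (\ref{tensorD}) enters: under it $B\otimes(B\otimes\text{can})$ is again a coequaliser, hence epi, so to verify associativity it suffices to compare $\rho_{V\otimes_A W}(m_B\otimes(V\otimes_A W))\,a$ with $\rho_{V\otimes_A W}(B\otimes\rho_{V\otimes_A W})$ after precomposition with $B\otimes(B\otimes\text{can})$; each side then collapses, via left $B$-linearity of $\text{can}$ and functoriality of $\otimes$, to one side of the associativity axiom for the $B$-action on $V$. The unit axiom is handled in the same style, using that $B\otimes\text{can}$ is epi. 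Finally, assuming these two functors also preserve the coequaliser defining $V\otimes_A W'$, so that $V\otimes_A W'$ carries a left $B$-action, I would prove $V\otimes_A f$ left $B$-linear by precomposing the two candidate composites with the epimorphism $B\otimes\text{can}$ and reducing, through left $B$-linearity of $\text{can}$ and $\text{can}'$ and the relation $(V\otimes_A f)\,\text{can}=\text{can}'\,(V\otimes f)$, to left $B$-linearity of $V\otimes f$ — which is functoriality of $\otimes$. (The closing recollection, the forgetful--free adjunction ${_A\phi}\dashv{_A U}$ and the identification of its Eilenberg--Moore category with ${_A\V}$, is standard and covered by the cited references.)
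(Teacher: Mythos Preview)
Your proposal is correct and follows exactly the approach the paper itself takes: the paper embeds its argument directly in the statement (the three bullet points and the phrase ``from this one sees\ldots'') rather than giving a separate proof, and for the $B$-module structure it merely asserts the conclusion. Your write-up fills in precisely the details the paper omits --- the coequaliser factorisation for the action, the use of the epimorphism $\text{can}$ (and its iterates under $B\otimes-$) to verify the module axioms and $B$-linearity of $V\otimes_A f$ --- and does so in the standard way, so there is nothing to correct or contrast.
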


\begin{lemma}\label{tensor.pr} Let $A$ be a monoid  in $\V$ and  ${M =A \otimes V}$
 the free left $A$-module generated by $V \in \V$. Then
\begin{zlist}
\item  for any ${N\in \V\!_A}$, the tensor product ${N \! \otimes_A \!M }$ exists
and is isomorphic to $N \!\otimes V$;
\item for $N \in {_B\!\V_A}$, $B$  any monoid in $\V$,
 ${N \!\otimes_A \!M }$ is a left $B$-module;
\item  for any morphism $f:A \otimes V \to A \otimes V'$ in $\V$,
  $N  \otimes_A f$ is a morphism of left $B$-modules;
\item   for any morphism $: V \to V'$ in $\V$,
the induced morphism $N  \otimes_A (A \otimes g)$ of left $B$-modules
is isomorphic to $N  \otimes g$.
\end{zlist}
\end{lemma}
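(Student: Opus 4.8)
The plan is to prove the four assertions of Lemma \ref{tensor.pr} by directly exploiting the fact that $M = A \otimes V$ is a \emph{free} left $A$-module. The key structural observation is that the tensor product $N \otimes_A M$ is defined in \ref{tensor} as the object part of a coequaliser, and that for a free module the relevant coequaliser is \emph{split}. First I would establish (1): for $N \in \V\!_A$ with action $\varrho_N : N \otimes A \to N$, the defining coequaliser
$$
\xymatrix{(N \otimes A) \otimes (A\otimes V) \ar@<3pt>[r] \ar@<-3pt>[r] & N \otimes (A \otimes V) \ar[r] & N \otimes_A (A \otimes V)}
$$
admits an explicit splitting built from the unit $e : I \to A$ and the associativity/unit constraints. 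Indeed, the map $N \otimes (A \otimes V) \to N \otimes V$ induced by the action $\varrho_N$ (after reassociating) together with the section $N \otimes V \to N \otimes (A\otimes V)$ coming from $e$ exhibits $N \otimes V$ as a split coequaliser, whence $N \otimes_A (A \otimes V) \cong N \otimes V$. Because the coequaliser is \emph{split}, it exists regardless of any completeness hypotheses on $\V$, and it is preserved by every functor --- this is the crucial point that will make the remaining parts nearly automatic.

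Having set up the split coequaliser, parts (2) and (3) follow from the last paragraph of \ref{tensor}. For (2), since $N \in {}_B\V_A$ is a $(B,A)$-bimodule, I invoke the statement there that whenever the functors $B \otimes -$ and $B\otimes(B\otimes -)$ preserve the defining coequaliser, $N \otimes_A M$ inherits a left $B$-module structure; but here the coequaliser is split, so these preservation hypotheses hold automatically for \emph{any} functors, and in particular for $B\otimes-$ and $B\otimes(B\otimes-)$. Thus $N \otimes_A M$ is a left $B$-module with no side conditions. Part (3) is handled identically: the cited paragraph guarantees that $N \otimes_A f$ is a morphism of left $B$-modules provided the functors preserve the coequalisers defining both source and target, and since both are split coequalisers (the target $N \otimes_A (A \otimes V')$ by the same construction as in (1)), this holds freely.

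For part (4) I would track the isomorphism of (1) through the naturality of the splitting data. Given $g : V \to V'$, the free-module morphism $A \otimes g : A \otimes V \to A \otimes V'$ is a morphism of left $A$-modules, so $N \otimes_A (A \otimes g)$ is defined and, by (3), is a left $B$-module morphism. Under the identifications $N \otimes_A (A \otimes V) \cong N \otimes V$ and $N \otimes_A (A \otimes V') \cong N \otimes V'$ from (1), I expect $N \otimes_A (A \otimes g)$ to correspond to $N \otimes g$; this amounts to checking that the splitting morphisms constructed from $e$ are natural in the $V$-variable, which is immediate from the functoriality of $\otimes$ and the coherence axioms. The main obstacle, such as it is, lies in part (1): one must carefully verify the split-coequaliser identities (the section and retraction satisfying the contractibility equations) using the unit axiom $\varrho_N(N \otimes e) = r_N$ of the right $A$-module $N$ together with the associativity constraint $a$. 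Once this diagram chase is discharged, everything else reduces to the observation that split coequalisers are absolute, and the remaining three parts are essentially bookkeeping.
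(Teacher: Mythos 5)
Your proposal is correct and follows essentially the same route as the paper: the paper's own proof is the one-line observation that everything follows from \ref{tensor} together with the fact that the coequaliser defining $N\otimes_A M$ is split (hence absolute, preserved by any functor). You have simply spelled out the splitting via the unit $e:I\to A$ and the resulting bookkeeping in more detail.
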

\begin{proof} Everything follows from \ref{tensor} and the fact that the equaliser
defining the tensor product
$N\otimes_A M$ is split and thus is preserved by any functor.
\end{proof}

\begin{remark}\label{remark}\em
The full subcategory of $_{A}\!\V$ generated
by the left $A$-modules of the form ${A \otimes  V}, \, V \in \V,$ is just the
Kleisli category
$\widetilde{\V}\!_{_A\!\bT}$ of the monad ${_A\!\bT}$ (e.g. \cite[2.4]{BBW}).
Hence Lemma \ref{tensor.pr} may be alternatively stated as follows:

{\em
Let $N \in {_B\V_A}$. Then, for any $X \in \widetilde{\V}\!_{_A\!\bT}$,
 the tensor product $N\! \otimes_A \!X$ exists and has
the structure of a left $B$-module. So the assignment $X \mapsto N\!\otimes_A \!X$
yields a functor
$N \!\otimes_A -: \widetilde{\V}\!_{_A\!\bT} \to {_B\!\V}$ leading to the
commutative diagram
$$
\xymatrix{
\V \ar[dr]_{N \otimes -} \ar[r]^-{\overline{\phi}_{_A\!\bT}} &
\widetilde{\V}\!_{_A\!\bT} \ar[d]^{N \otimes_A -}\\
&  {_B\V}\, .}$$   }
\end{remark}

\begin{thm}\label{bimon}{\bf Opmonoidal functors.} \em Recall that - following  \cite{McC} -
 an \emph{opmonoidal functor} from a monoidal
category $(\V, \otimes, I)$ to a monoidal category $(\V', \otimes', I')$ is a triple
$(S, \omega, \xi)$, where $S: \V \to \V'$ is a functor,
$\omega_{V,W} : S(V\otimes W)\to S(V)\otimes S(W)$
is a natural transformation between functors $\V \times \V \to \V$,
and $\xi: S(I) \to I'$ is a morphism compatible with the tensor structures.
Note that opmonoidal functors $S$ take $\V$-comonoids (i.e. comonoids in $\V$) into
$\V'$-comonoids in the sense that if $C=(C, \delta, \varepsilon)$ is a $\V$-comonoid,
then it produces a $\V'$-comonoid
$$S(C)=(S(C),\,\omega_{C,C} \cdot S(\delta),\, \xi \cdot S(\ve)).$$

In \cite{McC}, an  \emph{opmonoidal monad}
on a monoidal category $(\V,\otimes, I)$ is defined as a monad $\bT=(T,m, e)$ on $\V$
such that the functor $T$ and the natural transformations $m$ and $e$ are opmonoidal.
Such monads are also called {\em bimonads} (e.g. in \cite[Definition 3.2.1]{A}) but
they are different from what are called {\em bimonads} in \cite[Definition 4.1]{MW-Bim}
(compare \cite[Section 5]{MW-Gal}).
\smallskip

The basic property of opmonoidal monads $\bT$ is that they
lead to a monoidal structure on the Eilenberg-Moore category $\V_\bT$ of
$\bT$-modules in such a way that the forgetful functor $U_\bT: \V_\bT \to \V$ is
strictly monoidal.
Explicitly, for $\bT$-modules $(V,h)$ and $(W,g)$, the tensor product
$(V,h) \otimes (W,g)$
is given by
$$(V \otimes W, \,\,T(V \otimes W) \xr{\omega_{V,W}} T(V) \otimes T(W)
\xr{h \otimes g}V \otimes W)$$
and the unit object of $\V_\bT$ is the $\bT$-module $(I,\, \xi: T(I)\to I)$.
The unitary and associativity
isomorphisms for $\V_\bT$ are inherited from $\V$.
\end{thm}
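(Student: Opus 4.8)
The plan is to obtain the monoidal structure on $\V_\bT$ by \emph{transporting} the one on $\V$ along the forgetful functor, so that $U_\bT$ becomes strictly monoidal by construction, and then to verify that each piece of structure and each axiom descends to $\V_\bT$. Thus I would define the tensor product and unit object by the displayed formulas, take the constraints $a,l,r$ of $\V_\bT$ to be those of $\V$, and organise the verification into four steps: (1) the tensor product of two $\bT$-modules is again a $\bT$-module and $(I,\xi)$ is one; (2) $\ot$ is a bifunctor on $\V_\bT$; (3) $a,l,r$ are morphisms of $\bT$-modules; (4) coherence holds in $\V_\bT$ and $U_\bT$ is strict monoidal.

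For (1), write $\gamma=(h\ot g)\cdot\omega_{V,W}$ for the proposed action on $V\ot W$. The unit law $\gamma\cdot e_{V\ot W}=1$ reduces, via opmonoidality of $e$ (which gives $\omega_{V,W}\cdot e_{V\ot W}=e_V\ot e_W$), to $(h\cdot e_V)\ot(g\cdot e_W)=1$, and this holds since $(V,h)$ and $(W,g)$ are $\bT$-modules. The associativity law $\gamma\cdot m_{V\ot W}=\gamma\cdot T(\gamma)$ is the one substantive computation: on the left I would use opmonoidality of $m$, namely $\omega_{V,W}\cdot m_{V\ot W}=(m_V\ot m_W)\cdot\omega_{TV,TW}\cdot T(\omega_{V,W})$, followed by module associativity of $h$ and $g$; on the right I would use naturality of $\omega$ at the pair $(h,g)$ to rewrite $\omega_{V,W}\cdot T(h\ot g)=(Th\ot Tg)\cdot\omega_{TV,TW}$. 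Both sides then collapse to $((h\cdot Th)\ot(g\cdot Tg))\cdot\omega_{TV,TW}\cdot T(\omega_{V,W})$. Dually, the two module axioms for $(I,\xi)$ are literally the unit coherences expressing that $e$ and $m$ are opmonoidal, i.e. $\xi\cdot e_I=1$ and $\xi\cdot m_I=\xi\cdot T(\xi)$.

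For (2), given $\bT$-module morphisms $f$ and $k$, the map $f\ot k$ is again one: this follows at once from naturality of $\omega$ together with the defining equations $h'\cdot T(f)=f\cdot h$ and $g'\cdot T(k)=k\cdot g$. For (3), the $\bT$-linearity of each constraint is an instance of a coherence axiom of the opmonoidal functor $T$: that $a_{V,W,X}$ is $\bT$-linear is precisely the associativity coherence relating $\omega$ to $a$ (composed with the given actions), while the $\bT$-linearity of $l$ and $r$ are the left- and right-unit coherences relating $\omega$ and $\xi$ to $l$ and $r$.

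For (4), since $U_\bT$ is by construction faithful and carries $\ot$, the unit object, and each of $a,l,r$ to the corresponding data of $\V$, the pentagon and triangle identities in $\V_\bT$ are forced by those already holding in $\V$, and $U_\bT$ is strictly monoidal. I expect the main obstacle to be step (3) for the associator: verifying $\bT$-linearity of $a_{V,W,X}$ means threading $\omega$ across a triple tensor product and invoking naturality of $\omega$ in both arguments simultaneously, so the bookkeeping there is the most delicate part, whereas the unit-object, functoriality, and coherence steps are routine or purely formal.
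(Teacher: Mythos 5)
Your verification is correct and takes the standard route: the paper gives no proof of this item at all, recalling it as a known fact from \cite{McC}, and your transport-of-structure argument (module axioms for $(V\ot W,(h\ot g)\cdot\omega_{V,W})$ and $(I,\xi)$ via opmonoidality of $e$ and $m$, functoriality of $\ot$ via naturality of $\omega$, $\bT$-linearity of $a,l,r$ via the coherence axioms of the opmonoidal functor $T$, and coherence plus strictness of $U_\bT$ via faithfulness) is exactly how the cited reference establishes it. Nothing is missing.
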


\begin{thm}\label{alg-comonoid}{\bf $\bT$-module-comonoids.} \em
Given an opmonoidal monad $\bT$ on $\V$, a comonoid in the monoidal category $\V_\bT$
is called a $\bT$-\emph{module-comonoid}.
Explicitly, a $\bT$-module-comonoid $\bZ=((Z,\sigma), \delta, \ve)$ consists
of an object $(Z,\sigma)\in \V_\bT$ and $\V$-morphisms $\delta: Z \to Z \otimes Z$
and $\ve: Z \to I$ such that
$U_\bT(\bZ)=(Z, \delta, \ve)$ is a $\V$-comonoid and that $\delta$ and $\ve$ are
morphism of $\bT$-modules.

For any $\V$-comonoid $(C,\delta,\varepsilon)$,  $T(C)$ allows for a module-comonoid
structure with the morphisms (e.g. \cite{A})
\begin{itemize}
  \item $TT(C) \xr{m_C}T(C)$,
  \item $T(C) \xr{T(\delta)} T(C \otimes C) \xr{\omega_{C,C}} T(C) \otimes T(C)$,
  \item $T(C) \xr{T(\varepsilon)}T(I) \xr{\;\xi\;}I .$
\end{itemize}
 We write $\bT(C)$ for this module-comonoid.
\end{thm}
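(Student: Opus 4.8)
The plan is to recognise the asserted structure on $T(C)$ as the image of the $\V$-comonoid $(C,\delta,\ve)$ under the free functor $\phi_\bT\colon\V\to\V_\bT$, and to exploit the fact that $\phi_\bT$ carries a canonical opmonoidal structure. First I would record that $(T(C),m_C)=\phi_\bT(C)$ is by definition an object of $\V_\bT$, so nothing need be checked for the underlying module. It then remains to verify two things: that $\delta_{T(C)}:=\omega_{C,C}\cdot T(\delta)$ and $\ve_{T(C)}:=\xi\cdot T(\ve)$ are morphisms in $\V_\bT$ (that is, $\bT$-module maps), and that they satisfy the coassociativity and counit axioms of a comonoid in $\V_\bT$.

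For the comonoid axioms I would argue that they are automatic. Since $U_\bT$ is strictly monoidal and faithful, an equation between composites of structure maps in $\V_\bT$ holds precisely when its image under $U_\bT$ holds in $\V$; and because $U_\bT$ is strict, that image is exactly the coassociativity/counit diagram for the pair $(\omega_{C,C}\cdot T(\delta),\ \xi\cdot T(\ve))$ on the object $T(C)$ in $\V$. But this is the statement that the opmonoidal functor $T$ carries the $\V$-comonoid $C$ to a $\V$-comonoid, which holds by \ref{bimon}. Thus the comonoid equations need no work beyond invoking \ref{bimon} for $T$.

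The substance of the proof is therefore the first point: that $\delta_{T(C)}$ and $\ve_{T(C)}$ respect the $\bT$-actions. Unwinding the target action from \ref{bimon}, for $\delta_{T(C)}$ one must check
$$\omega_{C,C}\,T(\delta)\,m_C=(m_C\ot m_C)\,\omega_{T(C),T(C)}\,T\big(\omega_{C,C}\,T(\delta)\big),$$
and for $\ve_{T(C)}$ one must check $\xi\,T(\ve)\,m_C=\xi\,T(\xi\,T(\ve))$. Both follow from naturality of $\omega$ together with the opmonoidality of the multiplication $m\colon TT\to T$: the first reduces to the compatibility of $\omega$ with $m$ and naturality of $\omega$ along $\delta$, while the second reduces, via naturality of $m$ along $\ve\colon C\to I$, to the counit relation $\xi\,m_I=\xi\,T(\xi)$ that is part of the opmonoidal-monad axioms. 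Conceptually, these two verifications are precisely the assertion that $(\omega,\xi)$ equips $\phi_\bT$ with an opmonoidal structure — the doctrinal mate of the strict monoidal structure on $U_\bT$ — so that $\bT(C)=\phi_\bT(C)$ is a comonoid in $\V_\bT$ because opmonoidal functors preserve comonoids.

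The main obstacle will be the first displayed identity: it is the only genuine diagram chase, and care is needed in threading the tensor-product action of \ref{bimon} through the opmonoidality square for $m$ while keeping track of the associativity coherence suppressed by the strict convention. Everything else is either bookkeeping or a direct appeal to \ref{bimon}.
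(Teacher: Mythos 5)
Your proposal is correct. The paper itself offers no proof of this statement --- it is asserted with a pointer to Aguiar--Chase \cite{A} --- so there is nothing to compare against; what you have written is the standard verification, and the conceptual packaging (the $\bT$-module-comonoid $\bT(C)$ is $\phi_\bT(C)$ for the canonical opmonoidal structure on the free functor, mate to the strict monoidal structure on $U_\bT$, and opmonoidal functors preserve comonoids) is exactly the right way to see why no genuinely new computation is needed beyond \ref{bimon}. One small slip: in the first displayed identity the step that moves $m_C$ past $T(\delta)$ is naturality of $m$ along $\delta$ (giving $T(\delta)\cdot m_C=m_{C\otimes C}\cdot TT(\delta)$), not naturality of $\omega$; after that the opmonoidality square for $m$ finishes the computation, just as you use naturality of $m$ along $\ve$ together with $\xi\cdot m_I=\xi\cdot T(\xi)$ for the counit. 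This does not affect the validity of the argument.
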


\begin{thm}\label{action}{\bf $\V$-categories.} \em A \emph{left} $\V$-\emph{category}
is a category $\A$ equipped with a bifunctor
$$-\diamond-: \V \times \A \to \A, $$
called the \emph{action of} $\V$ \emph{on} $\A$,
and invertible natural transformations
$$\alpha_{V,W,X}:(V \otimes W)\diamond X \to V \diamond (W \diamond
X)\,\,\; \text{and}\,\,\; \lambda_X : I \diamond X \to  X,$$
called the \emph{associativity} and \emph{unit} isomorphisms,
respectively, satisfying two coherence axioms (see B\'{e}nabou
\cite{Bn}). Note that $\V$ has a canonical (left) action on itself, given by taking
$V \diamond W=V\otimes W$, $\alpha=a $, and $\lambda=l$.
\smallskip

Given a left $\V$-category $\A$ and a monoid $(A, e_{A}, m_{A})$
in $\V$, one has a monad $\bbT$ on $\A$ defined on any $X \in\A$ by
\begin{rlist}
\item[$\bullet$] $\bbT(X)=A \diamond X$,
\item[$\bullet$] $(e_{\bbT})_X : X \xr{\lambda_{X}^{-1}} I
\diamond X \xr{e_{A} \diamond X} A \diamond X =\bbT(X)$,
\item[$\bullet$] $(m_{\bbT})_X :\bbT(\bbT(X))=A\diamond (A \diamond X)
     \xr{\alpha^{-1}_{A,A,X}}
     (A \otimes A)\diamond X \xr{m_A\diamond X}  A \diamond X = \bbT(X),$
\end{rlist}
and we write $_A\A$ for the Eilenberg-Moore category $\A_{\bbT}$ of
$\bbT$-modules. For the canonical
left action of $\V$ on itself, $_A\A$ is just  the
category $_{A}\!\V$ of (left) $A$-modules.\smallskip

Dually, for any $\V$-coalgebra $(C,\varepsilon_C,\delta_C)$, the endofunctor
$C \diamond -:\A \to \A$
is the functor-part of a comonad $\bG^l\!\!_C$ on $\A$
and one has the corresponding Eilenberg-Moore category $^C\!\A=\A^{{\bG^l\!\!_C}}$;
for $\A=\V$ this is just the category $^{C}\V$ of (left) $C$-comodules.
We sometimes write $_A\phi$ and
$^C\!\phi$ for the functors $\phi_{\bT^l_A}$ and $\phi_{\bG^l_C}$, respectively.
\smallskip

Symmetrically, one has the monad $\bT^r_A=-\diamond A$
(resp. comonad $\bG^r_C=-\diamond C$) on $\A$,
the corresponding Eilenberg-Moore category $\A_A$ (resp. $\A^C$) of $\bT^r_A$-modules
(resp. $\bG^r_C$-comodules),
and the functor $\phi_A :\A \to \A_A$ (resp. $\phi^C :\A \to \A^C$).
\end{thm}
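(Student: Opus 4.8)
The substantive content of \ref{action} is that the displayed data really assemble into a monad $\bbT=(A\diamond-,e_{\bbT},m_{\bbT})$ on $\A$, that dually $C\diamond-$ carries a comonad structure $\bG^l_C$, and that for the canonical action these Eilenberg--Moore categories reduce to ${}_A\V$ and ${}^C\V$. The plan is to avoid checking the monad axioms by hand and instead to repackage the whole $\V$-action as a single strong monoidal functor, after which the monad and comonad structures appear by transport. Concretely, I would regard the endofunctor category $[\A,\A]$ as a strict monoidal category under composition, with unit $\mathrm{Id}_\A$, and define $\Phi\colon\V\to[\A,\A]$ by $\Phi(V)=V\diamond-$ on objects and $\Phi(f)=f\diamond-$ on morphisms. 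The action constraints then furnish the comparison maps: from $\alpha$ one gets $\Phi(U)\circ\Phi(V)=U\diamond(V\diamond-)\xrightarrow{\alpha^{-1}}(U\otimes V)\diamond-=\Phi(U\otimes V)$, and from $\lambda$ one gets $\mathrm{Id}_\A\xrightarrow{\lambda^{-1}}I\diamond-=\Phi(I)$.

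The first real step is to verify that $(\Phi,\alpha^{-1},\lambda^{-1})$ is a strong monoidal functor $(\V,\otimes,I)\to([\A,\A],\circ,\mathrm{Id}_\A)$. This is precisely what the two coherence axioms of a left $\V$-category in the sense of \cite{Bn} buy us: the pentagon-type axiom for $\alpha$ is the associativity coherence of $\Phi$, and the triangle-type axiom relating $\alpha$ with $\lambda$ (and the unitors of $\V$) gives the two unit coherences. Granting this, the monad claim is formal: a monoid in $([\A,\A],\circ)$ is exactly a monad on $\A$, a strong (hence lax) monoidal functor sends monoids to monoids, and applying $\Phi$ to $(A,e_A,m_A)$ produces the monad with unit $\mathrm{Id}_\A\xrightarrow{\lambda^{-1}}\Phi(I)\xrightarrow{\Phi(e_A)}\Phi(A)$ and multiplication $\Phi(A)\circ\Phi(A)\xrightarrow{\alpha^{-1}}\Phi(A\otimes A)\xrightarrow{\Phi(m_A)}\Phi(A)$; unwinding $\Phi$ reproduces verbatim the formulas for $e_{\bbT}$ and $m_{\bbT}$ in \ref{action}. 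Dually, since $\Phi$ is also oplax (via the inverse constraints) and a comonad is a comonoid in $([\A,\A],\circ)$, the same $\Phi$ carries the $\V$-comonoid $(C,\varepsilon_C,\delta_C)$ to the comonad $\bG^l_C$.

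For the Eilenberg--Moore identifications I would simply match the defining diagrams. With $\A=\V$, $\diamond=\otimes$, $\alpha=a$ and $\lambda=l$, a $\bbT$-module is a pair $(X,h\colon A\otimes X\to X)$ subject to $h\cdot(m_A\otimes X)\cdot a_{A,A,X}=h\cdot(A\otimes h)$ and $h\cdot(e_A\otimes X)\cdot l_X^{-1}=1_X$, and these are exactly the left $A$-module axioms recalled in \ref{mon-comon}; since morphisms coincide on the nose this gives $\V_{\bbT}={}_A\V$, and the comodule statement ${}^C\V=\V^{\bG^l_C}$ is the formal dual. The one place that demands genuine care, and that I regard as the crux, is the monoidal coherence of $\Phi$: this is exactly where the associativity of $m_A$ has to be reconciled with the pentagon coherence of $\alpha$, and it is the same computation that a direct check of the monad associativity law $m_{\bbT}\cdot\bbT m_{\bbT}=m_{\bbT}\cdot m_{\bbT}\bbT$ would demand. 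Everything else is either formal transport of structure or a term-by-term comparison of axioms.
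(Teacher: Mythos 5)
Your proposal is correct, but note that the paper offers no proof of \ref{action} at all: it is presented as a recollection of standard facts about actions of monoidal categories, with the coherence axioms delegated to B\'enabou \cite{Bn}. So there is no ``paper proof'' to match; what you have done is supply the standard argument the authors take for granted. Your route --- repackaging the action as a strong monoidal functor $\Phi=(-\diamond-)\colon(\V,\otimes,I)\to([\A,\A],\circ,\mathrm{Id}_\A)$ with constraints $\alpha^{-1}$ and $\lambda^{-1}$, and then transporting the monoid $(A,e_A,m_A)$ (resp.\ the comonoid $(C,\varepsilon_C,\delta_C)$, via the oplax direction) to a monad (resp.\ comonad) on $\A$ --- is exactly the efficient way to see this, and unwinding the transported unit and multiplication does reproduce the displayed formulas for $e_{\bbT}$ and $m_{\bbT}$; the Eilenberg--Moore identifications $\V_{\bbT}={}_A\V$ and $\V^{\bG^l_C}={}^C\V$ are, as you say, a term-by-term match of axioms including morphisms. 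The one point where you are slightly glib is the claim that the two action axioms give \emph{both} unit coherences of $\Phi$: B\'enabou's axioms are a pentagon and a single triangle (relating $\alpha_{V,I,X}$, $r_V\diamond X$ and $V\diamond\lambda_X$), and the remaining unit coherence (the one involving $\lambda_{V\diamond X}$, $l_V\diamond X$ and $\alpha_{I,V,X}$) has to be derived from these by the usual Kelly-type argument, just as the second triangle identity in a monoidal category follows from the pentagon and the first. This is standard and does not affect correctness, but it is the only step of your ``crux'' that is not literally a restatement of an axiom.
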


\begin{thm}\label{comodule}{\bf Comodules over opmonoidal functors.} \em
Let $-\diamond-: \V \times \A \to \A$
be a left action of a monoidal category $\V$ on a category $\A$ and
let $\mathcal{F}:\V \to \V$ be an opmonoidal functor on $\V$.
A \emph{comodule} over $\mathcal{F}$ is a pair $(H, \chi)$, where
$H: \A \to \A$ is a functor and
$\chi_{V,X}:H(V \diamond X) \to \mathcal{F}(V) \diamond H(X)$
is a natural transformation satisfying two axioms (e.g. \cite[Definition 3.3.1]{A}).

Suppose that $\bT=(T, m_{\bT},e_{\bT})$ is an opmonoidal monad on $\V$ (with structure
$\omega_{V,W}:T(V\otimes W)\to T(V)\otimes T(W)$
and $\xi: T(I)\to I$) and that $\bS=(S, m_{\bS},e_{\bS})$ is a monad
on $\A$ such that the functor $S$ is a comodule over the opmonoidal functor
$(T, \omega,\xi)$ via
$\chi_{V,a}:S(V \diamond a) \to T(V) \diamond S(a)$.
One says that $(\bS, \chi)$ is a \emph{comodule-monad} over the bimonad
$\bT$ if $\chi$ is compatible with the monad structures (\cite[Definition 3.5.1]{A}).
Considering $\bT$ as a monad on the left $\V$-category $\V$,
it follows from the definition of an opmonoidal monad that the pair $(\bT, \omega)$
is a comodule-monad over the opmonoidal monad $\bT$.

There is a left action of the monoidal category $\V_\bT$ (with the monoidal structure
from \ref{bimon})
on the category $\A_\bS$: given a $\bT$-module $(V,f)$ and an
$\bS$-module $(X,h)$, $(V,f)\diamond (X,h)$ is the pair
(e.g. \cite[Proposition 3.5.3]{A})
$$(V \diamond X, \, S(V \diamond X) \xr{\chi_{V,X}}T(V) \diamond S(X)
 \xr{f \,\diamond\, h} V \diamond X).$$
\end{thm}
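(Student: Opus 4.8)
The plan is to check, in order, the four ingredients of a left action of $\V_\bT$ on $\A_\bS$ in the sense of \ref{action}: (a) that the displayed formula produces a genuine $\bS$-module, (b) that it is functorial in both arguments, (c) that the associativity and unit isomorphisms $\alpha,\lambda$ of the given $\V$-action on $\A$ lift to $\bS$-module isomorphisms for the new action, and (d) that the two coherence axioms hold. Writing $\beta_{(V,f),(X,h)}=(f\diamond h)\cdot\chi_{V,X}$ for the proposed action morphism, the only nontrivial inputs are the comodule-monad compatibility of $\chi$ with the units and multiplications (\cite[Definition 3.5.1]{A}), the two comodule axioms tying $\chi$ to $\omega$ and $\xi$ (\cite[Definition 3.3.1]{A}), and the explicit monoidal structure of $\V_\bT$ recalled in \ref{bimon}, namely $(V,f)\ot(W,g)=(V\ot W,(f\ot g)\cdot\omega_{V,W})$ with unit object $(I,\xi)$.

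For (a) I would verify the two $\bS$-module laws for $\beta:=\beta_{(V,f),(X,h)}$. The unit law $\beta\cdot(e_\bS)_{V\diamond X}=1$ follows from the $e_\bS$-compatibility of $\chi$, which gives $\chi_{V,X}\cdot(e_\bS)_{V\diamond X}=(e_\bT)_V\diamond(e_\bS)_X$, together with the module unit laws $f\cdot(e_\bT)_V=1$ and $h\cdot(e_\bS)_X=1$ and bifunctoriality of $\diamond$; no occurrence of $\omega$ or $\xi$ is needed here. For the associativity law $\beta\cdot(m_\bS)_{V\diamond X}=\beta\cdot S(\beta)$ I would rewrite $S(\beta)=S(f\diamond h)\cdot S(\chi_{V,X})$, move $\chi_{V,X}$ across $S(f\diamond h)$ by naturality of $\chi$, and then collapse the two iterated copies of $\chi$ using the $m_\bS$-compatibility $\chi_{V,X}\cdot(m_\bS)_{V\diamond X}=((m_\bT)_V\diamond(m_\bS)_X)\cdot\chi_{T(V),S(X)}\cdot S(\chi_{V,X})$; the identity then reduces to the module associativity laws $f\cdot(m_\bT)_V=f\cdot T(f)$ and $h\cdot(m_\bS)_X=h\cdot S(h)$. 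Part (b) is immediate: for module morphisms $p,q$ one has $(p\diamond q)\cdot(f\diamond h)=(f'\diamond h')\cdot(T(p)\diamond S(q))$ by the morphism conditions and bifunctoriality, and naturality of $\chi$ turns $(T(p)\diamond S(q))\cdot\chi_{V,X}$ into $\chi_{V',X'}\cdot S(p\diamond q)$, whence $p\diamond q$ is $\bS$-linear.

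For (c) I would take the underlying morphisms $\alpha_{V,W,X}$ and $\lambda_X$ of the $\V$-action and show they are $\bS$-linear for the two actions they relate. The substantive check is that $\alpha_{V,W,X}$ intertwines the action coming from $\big((V,f)\ot(W,g)\big)\diamond(X,h)$ with that coming from $(V,f)\diamond\big((W,g)\diamond(X,h)\big)$. Here I would expand both sides, apply the coassociativity comodule axiom $(T(V)\diamond\chi_{W,X})\cdot\chi_{V,W\diamond X}\cdot S(\alpha_{V,W,X})=\alpha_{T(V),T(W),S(X)}\cdot(\omega_{V,W}\diamond S(X))\cdot\chi_{V\ot W,X}$, use naturality of $\alpha$ to pass $f\diamond(g\diamond h)$ and $(f\ot g)\diamond h$ through it, and recognise $(f\ot g)\cdot\omega_{V,W}$ as exactly the $\V_\bT$-tensor action of \ref{bimon}; this is where $\omega$ enters. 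The unit isomorphism $\lambda_X$ is handled the same way with the second comodule axiom and the unit object $(I,\xi)$ in place of the coassociativity axiom and $\omega$. Naturality of these families and the two coherence axioms in (d) are then inherited, since $U_\bT$ and $U_\bS$ send all the structure morphisms back to the already-coherent $\alpha,\lambda$ of the $\V$-action on $\A$.

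I expect the main obstacle to be the $\bS$-linearity of $\alpha$ in (c): it is the one place where several ingredients must be threaded together in the right order --- the coassociativity comodule axiom (which produces an $\omega_{V,W}$), naturality of $\alpha$, and bifunctoriality of $\diamond$ --- and where the $\omega$ manufactured by the comodule axiom has to be matched precisely against the $\omega$ built into the tensor product of $\V_\bT$. Once that matching is confirmed, the remaining verifications are formal, with the unit computations running in parallel to the associativity ones via $\xi$ and $\lambda$.
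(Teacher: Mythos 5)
Your verification is correct and is the standard direct check: the paper itself gives no proof of this statement, deferring entirely to the citation \cite[Proposition 3.5.3]{A}, where essentially the same componentwise verification (module laws from the unit and multiplication compatibility of $\chi$, lifting of $\alpha$ and $\lambda$ from the comodule axioms, coherence inherited along the faithful forgetful functors) is carried out. Your identification of the associativity constraint as the one place where the $\omega_{V,W}$ produced by the coassociativity axiom for $\chi$ must be matched against the $\omega_{V,W}$ built into the tensor product of $\V_\bT$ from \ref{bimon} is precisely the crux of that argument.
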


\begin{thm} \label{ass.1}{\bf Assumption 1.} \em
We henceforth suppose that
\begin{itemize}
  \item $\bT=((T, m_{\bT},e_{\bT}),\omega,\xi)$ is an opmonoidal monad on a
  monoidal category $(\V, \otimes, I, a, l, r)$;
  \item $(\A, \diamond, \alpha,\lambda)$ is a left $\V$-category;
  \item $\bS=(S, m_{\bS},e_{\bS})$ is a $\bT$-comodule-monad on $\A$ via
    $$\chi_{V,-}:S(V \diamond -) \to T(V) \diamond S(-);$$
  \item $\bZ=((Z,\sigma),\delta,\varepsilon)$ is a $\mathcal{T}$-module-comonoid.
\end{itemize}
\end{thm}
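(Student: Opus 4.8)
The statement labelled \ref{ass.1} is an \emph{Assumption}, introduced by the declarative phrase ``We henceforth suppose that''. It therefore carries no assertion beyond the four standing hypotheses it records, and in the strict sense there is nothing to prove. My plan is accordingly to confirm this reading and to indicate what, if anything, a reader might wish to verify. Each bullet merely instantiates a notion already defined earlier in the section: an opmonoidal monad $\bT$ (\ref{bimon}), a left $\V$-category $(\A,\diamond,\alpha,\lambda)$ (\ref{action}), a $\bT$-comodule-monad $\bS$ with coherence $\chi$ (\ref{comodule}), and a $\bT$-module-comonoid $\bZ$ (\ref{alg-comonoid}). The block exists only to fix this data once and for all, so that it may be invoked without repetition in the sequel.

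If one wanted to give the hypotheses genuine content, the natural plan would be to check that they are simultaneously satisfiable, i.e.\ non-vacuous. The cleanest route is to produce the prototypical example furnished by the canonical self-action of \ref{action}: take $\A=\V$ with $\diamond=\otimes$, $\alpha=a$, and $\lambda=l$. For any opmonoidal monad $\bT$ the pair $(\bT,\omega)$ is then a $\bT$-comodule-monad---this is precisely the remark recorded at the end of \ref{comodule}---so the third bullet holds with $\bS=\bT$ and $\chi=\omega$; and for any $\V$-comonoid $C$ the module-comonoid $\bT(C)$ constructed in \ref{alg-comonoid} supplies a $\bZ$. Unwinding the relevant definitions then shows all four bullets hold simultaneously, with no new coherence calculation required beyond those already carried out in \ref{bimon}--\ref{comodule}.

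The only clause carrying real force is the third, where the coherence $\chi\colon S(V\diamond-)\to T(V)\diamond S(-)$ is required to be compatible with $m_{\bS}$ and $e_{\bS}$ in the sense of \cite[Definition 3.5.1]{A}. That compatibility is postulated here rather than derived, so I expect no obstacle to overcome: the block asserts exactly the conjunction of these previously defined structures and nothing more. A proof in the usual sense is thus neither possible nor called for, and the appropriate treatment is simply to register the assumptions and pass on.
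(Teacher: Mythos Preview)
Your reading is correct: \ref{ass.1} is a standing hypothesis block, not a claim, and the paper offers no proof either—it simply moves on to use the data in Proposition~\ref{entwining}. Your additional remark that the hypotheses are non-vacuous (via $\A=\V$, $\bS=\bT$, $\chi=\omega$, $\bZ=\bT(C)$) is a welcome sanity check that the paper does not make explicit at this point, though it is exactly the instance exploited later in Section~\ref{bimod}.
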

\smallskip

Since $\bZ$ is a comonoid in the monoidal category $\V_\bT$ and since $\V_\bT$
acts from the left on $\A_\bS$, one has the $\A_\bS$-comonad $\bbG$.
Moreover, since $\bZ_0=U_\bT(\bZ)$ is a comonoid in
the monoidal category $\V$, one has the $\A$-comonad $\bG_{\bZ_0}^l$,
and it is not hard to check that $\bbG$ is a lifting of $\bG_{\bZ_0}^l$ to $\A_\bS$.
It follows from Theorem \ref{wolff} that there is a mixed distributive law $\lambda$
from the $\A$-monad $\bS$ to the $\A$-comonad $\bG_{\bZ_0}^l$.

\begin{proposition}\label{entwining} With the data considered in {\rm \ref{comodule}},
$\lambda$ is the composite
$$S(Z\diamond -)\xr{\chi_{Z,-}}T(Z)\diamond S(-)\xr{\sigma\diamond S(-)}Z\diamond S(-).$$
\end{proposition}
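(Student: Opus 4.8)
The plan is to recover $\lambda$ from the lifting comonad $\bbG$ by specialising the recipe given in the discussion after Theorem \ref{wolff}. In that recipe the monad is played here by $\bS$ and the comonad being lifted is $\bG^l_{\bZ_0}$, whose functor part is $Z \diamond -$; thus the sought distributive law is $\lambda : S(Z \diamond -) \to Z \diamond S(-)$. After the identifications $S(Z\diamond-)U_\bS = S U_\bS \bbG$ (lifting condition $(Z\diamond-)U_\bS = U_\bS\bbG$) and $S U_\bS = U_\bS\phi_\bS U_\bS$ are made, the recipe presents the component at $X$ as the composite
$$S(Z \diamond X) \xr{S(Z \diamond (e_\bS)_X)} S(Z \diamond S(X)) \xr{(U_\bS \ve_\bS \bbG \phi_\bS)_X} Z \diamond S(X).$$
So the first step is simply to write down this composite, taking care that the leftmost arrow is the whiskering $S(Z\diamond e_\bS)$ of the monad unit.

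Next I would evaluate the second arrow explicitly. By \ref{comodule}, the $\bS$-module $\bbG\phi_\bS(X) = (Z,\sigma)\diamond(S(X),(m_\bS)_X)$ has underlying object $Z \diamond S(X)$ and structure morphism $(\sigma \diamond (m_\bS)_X)\circ \chi_{Z, S(X)}$. Since the counit $\ve_\bS$ at any free--forgetful image $(Y,g)$ is the structure morphism $g$ itself, and $U_\bS$ acts as the identity on underlying morphisms, the arrow $(U_\bS \ve_\bS \bbG\phi_\bS)_X$ is exactly that structure morphism. Hence
$$\lambda_X = (\sigma \diamond (m_\bS)_X)\circ \chi_{Z, S(X)} \circ S(Z \diamond (e_\bS)_X).$$

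The final step is to telescope. Naturality of $\chi_{Z,-}$ applied to $(e_\bS)_X : X \to S(X)$ gives $\chi_{Z,S(X)}\circ S(Z \diamond (e_\bS)_X) = (T(Z)\diamond S((e_\bS)_X))\circ \chi_{Z,X}$, so that
$$\lambda_X = (\sigma \diamond (m_\bS)_X)\circ (T(Z)\diamond S((e_\bS)_X))\circ \chi_{Z,X}.$$
Functoriality of the action $\diamond$ merges the first two factors into $\sigma \diamond \big((m_\bS)_X \circ S((e_\bS)_X)\big)$, and the monad unit law $m_\bS \cdot S e_\bS = 1$ collapses the second slot to $1_{S(X)}$. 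This yields $\lambda_X = (\sigma \diamond S(X))\circ \chi_{Z,X}$, which is precisely the asserted composite $(\sigma \diamond S(-))\circ \chi_{Z,-}$.

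The telescoping is routine; the one place demanding care is the bookkeeping in the recipe of Theorem \ref{wolff} — specifically confirming that the relevant counit component really is the module structure morphism read off from \ref{comodule}, since a slip there would alter which instance of $\chi$ and which multiplication appear. I would also double-check that the interchange law for the bifunctor $\diamond$ legitimately merges $\sigma \diamond (m_\bS)_X$ with $T(Z)\diamond S((e_\bS)_X)$, as the whole simplification hinges on that middle-four exchange.
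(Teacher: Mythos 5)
Your proposal is correct and follows essentially the same route as the paper: both extract $\lambda_X$ from the lifting recipe after Theorem \ref{wolff}, identify the counit component at $\bbG\phi_\bS(X)$ with the structure morphism $(\sigma\diamond (m_\bS)_X)\cdot\chi_{Z,S(X)}$, and then collapse the composite via naturality of $\chi$ and the monad unit law $m_\bS\cdot Se_\bS=1$. The only difference is cosmetic: the paper packages the last two steps into a single commutative diagram, while you telescope them algebraically.
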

\begin{proof} By \ref{comodule},  for any $(X,h)\in \A_\bT$,
$$\bbG(X,h)=(Z \diamond X,\, S(Z \diamond X)\xr{\chi_{Z,X}}T(Z)\diamond S(X)
\xr{\sigma \diamond h} Z \diamond X),$$
and it follows that $(\ve_\bS)_{\bbG(X,h)}=(\sigma \diamond h)\cdot \chi_{Z,X}$,  thus
$$(\ve_\bS)_{\bbG \phi_\bS(X)}=(\sigma \diamond (m_\bS)_X) \cdot \chi_{Z,S(X)},$$
since $\phi_\bS(X)=(S(X), (m_\bS)_X)$.
By Theorem \ref{wolff}, for $X \in \A$, $\lambda_X$ is the composite
$$S(Z \diamond X) \xr{S(Z \diamond (e_\bS)_X)} S(Z \diamond S(X))
\xr{\chi_{Z,S(X)}}T(Z) \diamond SS(X)
\xr{\sigma \diamond (m_\bS)_X} Z \diamond S(X).$$
In the  diagram
$$
\xymatrix{
S(Z \diamond X) \ar[d]_-{\chi_{Z,X}} \ar[rr]^{S(Z \diamond (e_\bS)_X)} &&
S(Z \diamond S(X)) \ar[d]^-{\chi_{Z,S(X)}}\\
T(Z) \diamond S(X) \ar[rrd]_{\sigma \diamond S(X)} \ar[rr]^{T(Z) \diamond (e_\bS)_X} &&
T(Z) \diamond SS(X) \ar[d]^{\sigma \diamond (m_\bS)_X}\\
&& Z \diamond S(X)\, ,}$$
the rectangle commutes by naturality of $\chi$, while  $m_\bS \cdot e_\bS=1$
implies commutativity of the triangle;
it follows that $\lambda_X=(\sigma \diamond S(X))\cdot \chi_{Z,X}$.
\end{proof}

 \begin{thm}\label{gen-Hopf}{\bf Generalised Hopf modules.} \em
$^{\bZ} \!(\A_{\bS})=\A_\bS^{\!\bG^l_{\bZ_0}}(\lambda)$ is the category of
 $\lambda$-bimodules (see \ref{lambda-bim}); the objects
 are  triples $(X, h,\vartheta)$, where $X \in \A$,
$(X,\, h:S(X) \to X)\in \A_\bS$,
$(X,\, \vartheta : X \to Z \diamond X) \in {^{\bZ_0}\!\A}$
with commuting diagram
$$
\xymatrix{ S(X) \ar[d]_{S(\vartheta)} \ar[r]^{h} &X \ar[r]^{\vartheta\quad} &
 Z \diamond X  \\
S(Z \diamond X) \ar[rr]_{\lambda_X}&& Z \diamond S(X) \ar[u]_-{Z\diamond h}.}
$$

In \cite[Definition 3.6.1]{A},
these are called {\em generalised Hopf modules} and the category $^{\bZ} \!(\A_{\bS})$
is denoted by $\text{{\rm Hopf}}(\mathcal{T}, \mathcal{S}, Z)$.
\end{thm}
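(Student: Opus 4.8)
The plan is to read $^{\bZ}(\A_\bS)$ as an Eilenberg--Moore category of comodules and then invoke the isomorphism of Theorem \ref{lambda-bim}, with the monad $\bT$ there replaced by $\bS$, the comonad $\bG$ by $\bG^l_{\bZ_0}$, and the lifted comonad $\widehat{\bG}$ by $\bbG$. Concretely, there are three steps: (1) identify $^{\bZ}(\A_\bS)$ with $(\A_\bS)^{\bbG}$; (2) apply \ref{lambda-bim} to rewrite $(\A_\bS)^{\bbG}$ as the category of $\lambda$-bimodules $\A_\bS^{\bG^l_{\bZ_0}}(\lambda)$; (3) unwind the resulting object description using the explicit form of $\lambda$ from Proposition \ref{entwining}.

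For step (1), I would appeal to the convention fixed in \ref{action}: for a comonoid $C$ in a monoidal category acting on $\A$ one sets $^{C}\A = \A^{\bG^l_C}$. Applying this with the monoidal category $\V_\bT$ (which acts on $\A_\bS$ by \ref{comodule}) and the comonoid $\bZ$ shows that $^{\bZ}(\A_\bS)$ is by definition the Eilenberg--Moore category $(\A_\bS)^{\bbG}$ of the comonad $\bbG = \bG^l_\bZ$ induced on $\A_\bS$. This is the only point where one must compare our formalism with \cite[Definition 3.6.1]{A}: one checks that a $\bZ$-coaction $M \to \bZ \diamond M$ in the sense of \cite{A} is the same datum as a $\bbG$-coalgebra structure, the coassociativity and counit axioms matching on the nose.

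For step (2), the paragraph preceding the statement has already recorded (through Theorem \ref{wolff}) that $\bbG$ is a lifting of $\bG^l_{\bZ_0}$ to $\A_\bS$ and that $\lambda$ is the associated mixed distributive law. Hence Theorem \ref{lambda-bim} applies directly and gives the isomorphism of categories
\[
\A_\bS^{\bG^l_{\bZ_0}}(\lambda)\;\simeq\;(\A_\bS)^{\bbG}.
\]
Combined with step (1) this yields the asserted equality $^{\bZ}(\A_\bS) = \A_\bS^{\bG^l_{\bZ_0}}(\lambda)$.

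For step (3), an object of $\A_\bS^{\bG^l_{\bZ_0}}(\lambda)$ is, by \ref{lambda-bim}, a triple $(X, h, \vartheta)$ with $(X,h)\in\A_\bS$ and $(X,\vartheta)\in \A^{\bG^l_{\bZ_0}} = {}^{\bZ_0}\A$, subject to the compatibility square of \ref{lambda-bim}. Substituting $G = Z \diamond -$ (so that $G(X) = Z \diamond X$ and $G(h) = Z \diamond h$) and the value $\lambda_X = (\sigma \diamond S(X)) \cdot \chi_{Z,X}$ computed in Proposition \ref{entwining} turns that square into exactly the diagram displayed in the statement. The main --- and essentially the only genuine --- obstacle is the bookkeeping of step (1): verifying that the generalised Hopf modules of \cite[Definition 3.6.1]{A} coincide objectwise and morphismwise with $\bbG$-comodules. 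Once that translation is made and $\lambda$ is inserted from \ref{entwining}, no further computation is required, since \ref{lambda-bim} carries all the structure across.
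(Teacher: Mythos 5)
Your proposal is correct and matches the paper's (implicit) justification: the paper presents \ref{gen-Hopf} as a direct consequence of the convention $^{C}\!\A=\A^{\bG^l_C}$ from \ref{action} applied to the action of $\V_\bT$ on $\A_\bS$, the observation preceding \ref{entwining} that $\bbG$ is a lifting of $\bG^l_{\bZ_0}$ with associated entwining $\lambda$, and the isomorphism $\A^{\bG}_{\bT}(\lambda)\simeq(\A_\bT)^{\widehat{\bG}}$ of \ref{lambda-bim}, which is exactly your three-step chain. The only difference is one of emphasis: what you flag as the ``main obstacle'' in step (1) is, in the paper's formalism, true by definition of the notation $^{\bZ}\!(\A_\bS)$, with the comparison to \cite[Definition 3.6.1]{A} left as a remark.
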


\begin{thm} \label{ass.2}{\bf  Assumption 2.} \em
We henceforth suppose that $(C,\delta,\varepsilon)$ is a comonoid in $\V$ and
that $\bZ=\mathcal{T}(C)$ is the corresponding $\mathcal{T}$-module-comonoid
(see \ref{alg-comonoid}).
\end{thm}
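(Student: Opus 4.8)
The statement \ref{ass.2} is a standing \emph{assumption}, not a claim carrying its own content: it only fixes a $\V$-comonoid $(C,\delta,\ve)$ and puts $\bZ=\mathcal{T}(C)$. The sole point needing justification is the word ``corresponding'', i.e.\ the assertion --- announced in \ref{alg-comonoid} and attributed there to \cite{A} --- that the prescribed data genuinely make $\mathcal{T}(C)$ a $\bT$-module-comonoid, that is, a comonoid in the monoidal category $\V_\bT$ of \ref{bimon}. The plan is to verify precisely this.

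First I would observe that $(T(C),m_C)$ is the free $\bT$-module $\phi_\bT(C)$, hence automatically an object of $\V_\bT$, so nothing is to be checked there. The real work is to show that the two structure maps
$$\delta_{\bZ}=\omega_{C,C}\cdot T(\delta)\colon T(C)\to T(C)\otimes T(C),\qquad
\ve_{\bZ}=\xi\cdot T(\ve)\colon T(C)\to I$$
are morphisms of $\bT$-modules for the monoidal structure of \ref{bimon}. Explicitly, $\delta_\bZ$ has to intertwine $m_C$ with the action $(m_C\otimes m_C)\cdot\omega_{T(C),T(C)}$ carried by $\phi_\bT(C)\otimes\phi_\bT(C)$, and $\ve_\bZ$ has to intertwine $m_C$ with $\xi$ on the unit object $(I,\xi)$. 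Each of these is a diagram chase whose only non-formal ingredients are the opmonoidal-monad axioms: the naturality of $\omega$ and the compatibility of $m$ and $e$ with $(\omega,\xi)$, i.e.\ the opmonoidality of the multiplication and unit recalled in \ref{bimon}.

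Granting that $\delta_\bZ$ and $\ve_\bZ$ live in $\V_\bT$, it then remains to confirm the comonoid axioms (coassociativity and the two counit identities) in $\V_\bT$. Here I would use that $U_\bT\colon\V_\bT\to\V$ is strictly monoidal and faithful (\ref{bimon}): under $U_\bT$ the required equations become exactly the comonoid identities for $(T(C),\delta_\bZ,\ve_\bZ)$ in $\V$, and faithfulness transports them back to $\V_\bT$. At the $\V$-level these follow from $(C,\delta,\ve)$ being a $\V$-comonoid together with the coherence of the opmonoidal structure $(\omega,\xi)$. The main obstacle is the first step --- the $\bT$-linearity of $\delta_\bZ$ and $\ve_\bZ$ --- as this is the only place where the full opmonoidal-monad structure is genuinely needed; the comonoid axioms themselves are essentially inherited from $\V$ through the strict monoidal faithful functor $U_\bT$ and are therefore routine.
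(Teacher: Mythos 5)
You are right that \ref{ass.2} is a standing assumption with no proof obligation, and the paper accordingly gives no argument, simply recalling in \ref{alg-comonoid} (with a reference to \cite{A}) that $\mathcal{T}(C)=\phi_\bT(C)$ carries the stated module-comonoid structure. Your sketch of that background fact --- checking $\bT$-linearity of $\omega_{C,C}\cdot T(\delta)$ and $\xi\cdot T(\ve)$ via the opmonoidality of $m$ and $e$, then transporting the comonoid axioms along the strict monoidal faithful $U_\bT$ --- is the standard verification (equivalently, $\phi_\bT$ is opmonoidal and opmonoidal functors preserve comonoids, as noted in \ref{bimon}), so nothing is missing.
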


\begin{lemma} \label{lemma1} In the situation considered above, the assignment
$$(X, \theta) \rightsquigarrow (S(X), (m_\bS)_X, \vartheta),$$
where $\vartheta: S(X) \to Z\diamond S(X)$ is the composite
$$S(X) \xrightarrow{S(\theta)} S(C\diamond X)
\xrightarrow{\chi_{C,X}}\mathcal{T}(C)\diamond S(X)=Z\diamond S(X),$$
yields a functor
$$K: {^C\!\A} \to  {^\bZ\!(\A_{\bS}) } $$ yielding  commutativity  of the diagram
\begin{equation}\label{lemma.d}
\xymatrix{ ^C\!\A \ar[rr]^-{K} \ar[d]_{^C\!U} && {^\bZ\!(\A_{\bS})}  \ar[d]^{^\bZ \!U} \\
\A \ar[rr]_{\phi_{\bS}}&& \A_{\bS}.}
\end{equation}
\end{lemma}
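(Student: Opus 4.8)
The statement bundles three claims: that each triple $(S(X),(m_\bS)_X,\vartheta)$ really is an object of $^\bZ\!(\A_\bS)$, that the assignment is functorial, and that the square (\ref{lemma.d}) commutes. I would organise everything around the identification $^\bZ\!(\A_\bS)\cong(\A_\bS)^{\bbG}$ of Lemma \ref{lambda-bim}: under it, producing $K$ together with the required factorisation $^\bZ\!U\cdot K=\phi_\bS\cdot{}^C\!U$ is, by Theorem \ref{com-fun} applied with base category $\A_\bS$ and comonad $\bbG$, the same as equipping $F:=\phi_\bS\cdot{}^C\!U:{}^C\!\A\to\A_\bS$ with a left $\bbG$-comodule structure $\kappa$. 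The candidate is $\kappa_{(X,\theta)}:=\vartheta=\chi_{C,X}\cdot S(\theta)$, regarded as a morphism $\phi_\bS(X)\to\bbG\,\phi_\bS(X)$ in $\A_\bS$. Because $U_\bS$ is faithful and $\bbG$ lifts $\bG^l_{\bZ_0}$ (so $U_\bS\bbG=\bG^l_{\bZ_0}U_\bS$ and $U_\bS$ carries the structure maps of $\bbG$ to those of $\bG^l_{\bZ_0}$), the verification splits cleanly into: (a) $\vartheta$ is a morphism in $\A_\bS$, i.e.\ an $\bS$-module map $\phi_\bS(X)\to\bbG\,\phi_\bS(X)$ — precisely the $\lambda$-bimodule compatibility of \ref{gen-Hopf}; and (b) the underlying $\A$-morphism $\vartheta$ makes $(S(X),\vartheta)$ a $\bG^l_{\bZ_0}$-comodule, which encodes the $\bbG$-comodule axioms for $\kappa$.

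For (a) I would verify $\vartheta\cdot(m_\bS)_X=(Z\diamond(m_\bS)_X)\cdot\lambda_{S(X)}\cdot S(\vartheta)$ directly. First rewrite the left side, using naturality of $m_\bS$ along $\theta$, as $\chi_{C,X}\cdot(m_\bS)_{C\diamond X}\cdot SS(\theta)$, then apply the multiplication-compatibility axiom of the comodule-monad $\chi$ to replace $\chi_{C,X}\cdot(m_\bS)_{C\diamond X}$ by $((m_\bT)_C\diamond(m_\bS)_X)\cdot\chi_{T(C),S(X)}\cdot S(\chi_{C,X})$. On the right side I would substitute the explicit entwining from Proposition \ref{entwining}, namely $\lambda_Y=(\sigma\diamond S(Y))\cdot\chi_{Z,Y}$ with $Z=T(C)$ and $\sigma=m_C$, together with $S(\vartheta)=S(\chi_{C,X})\cdot SS(\theta)$. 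The two sides then coincide once one invokes the identification $\sigma=(m_\bT)_C$ from \ref{alg-comonoid} and functoriality of $\diamond$, which yields $((m_\bT)_C\diamond(m_\bS)_X)=(T(C)\diamond(m_\bS)_X)\cdot(m_C\diamond SS(X))$.

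For (b) the counit axiom is short: using naturality of $\chi$ in its first variable to pull $T(\ve)$ through, then the counit axiom of the comodule structure $\chi$ (relating $\xi$ to the unit isomorphism of the action), and finally the counit law of $\theta$ as a $C$-comodule, one computes that the $\bG^l_{\bZ_0}$-counit applied to $\vartheta$ is the identity. The coassociativity axiom is the main obstacle: here I would expand $\vartheta$ and the comultiplication $\delta_Z=\omega_{C,C}\cdot T(\delta)$ of $Z=T(C)$ (from \ref{alg-comonoid}) and match $\alpha_{Z,Z,S(X)}\cdot(\delta_Z\diamond S(X))\cdot\vartheta$ against $(Z\diamond\vartheta)\cdot\vartheta$. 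This is a longer diagram chase combining the comultiplication axiom of the comodule structure $\chi$ (which relates $\chi_{C\ot C,-}$, $\omega_{C,C}$ and the iterated $\chi$'s through the associativity isomorphism $\alpha$ of the action), naturality of $\chi$, the coassociativity of $\theta$, and coherence for $\alpha$; I expect essentially all the bookkeeping to live here.

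Finally, functoriality and the commuting square are routine. A morphism $f:(X,\theta)\to(X',\theta')$ of $C$-comodules is sent to $S(f)=\phi_\bS(f)$, which is automatically an $\bS$-module map; that it respects the comodule structures, $(Z\diamond S(f))\cdot\vartheta_X=\vartheta_{X'}\cdot S(f)$, follows from naturality of $\chi$ in its second variable together with the comodule-morphism identity $(C\diamond f)\cdot\theta=\theta'\cdot f$. Commutativity of (\ref{lemma.d}) then holds on the nose, since $^\bZ\!U\cdot K$ discards the comodule datum and returns $(S(X),(m_\bS)_X)=\phi_\bS(X)=\phi_\bS\cdot{}^C\!U(X,\theta)$ on objects and $S(f)=\phi_\bS(f)$ on morphisms.
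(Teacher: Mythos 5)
Your proposal is correct and follows essentially the same route as the paper: the three verifications you isolate — the $\bS$-linearity of $\vartheta$ via naturality of $m_\bS$ and the multiplicativity axiom of the comodule-monad $\chi$, and the counit and coassociativity axioms via naturality of $\chi$, the remaining comodule-monad axioms, and the $C$-comodule axioms of $\theta$ — are exactly the pentagon and the diagrams (I) and (II) that the paper checks, with the same ingredients. The only (harmless) difference is presentational: you package the result through \ref{com-fun} and the lifting $\bbG$, and you spell out functoriality and naturality of $\kappa$, which the paper leaves implicit.
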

\begin{proof}
To show that $(X, \vartheta)\in {^{\bZ_0}\!\A}$ is to show commutativity of the diagrams
$$ \xymatrix{
S(X) \ar@{}[dr]|{\rm(I)} \ar[r]^-{\vartheta}
\ar@{=}[d] & Z\diamond S(X)   \ar[d]^{\overline{\ve}\diamond S(X)}\\
 S(X)  & I \diamond S(X) \ar[l]^-{\lambda_{S(X)}} \, ,} \qquad
\xymatrix{
S(X)\ar[d]_-{\vartheta} \ar@{}[dr]|{\rm(II)} \ar[r]^-{\vartheta} &
Z\diamond S(X) \ar[d]^{\overline{\delta}\diamond S(X)}\\
Z\diamond S(X) \ar[r]_-{Z\diamond \vartheta}& Z\diamond  Z\diamond S(X)\, ,}
$$
where $\overline{\ve}=\xi \cdot T(\ve)$ and $\overline{\delta}=\omega_{C, C}
\cdot T(\delta)$ are the counit
and the comultiplication for the $\V_\bT$-module-comonoid $\bZ=\mathcal{T}(C)$
 (see \ref{alg-comonoid}).
In the diagram
$$
\xymatrix{
S(X) \ar[drr]_{S(\lambda_X^{-1})} \ar[rr]^-{S(\theta)} &&
S(C \diamond X )\ar[d]^{S(\varepsilon\diamond X)} \ar[rr]^-{\chi_{C,X}} &&
T(C) \diamond S(X) \ar[d]^{T(\ve) \diamond S(X)}\\
&& S(I \diamond X)\ar[d]_{S(\lambda_X)} \ar[rr]_-{\chi_{I,X}} &&
T(I) \diamond S(X)\ar[d]^{\xi\diamond S(X)}\\
&&S(X) \ar[rr]_{\lambda_{S(X)}^{-1}} && I \diamond S(X),}
$$
\begin{itemize}
  \item the triangle commutes since $(X,\theta)\in {^C\A}$,
  \item the top rectangle commutes by naturality of $\chi$,
  \item the bottom rectangle commutes since $\bS$ is a $\bT$-comodule-monad
 (see diagram (3.6) in \cite{A});
\end{itemize}
it follows that diagram (I) is commutative. To show that (II) is also commutative,
consider the diagram
$$
\xymatrix{
S(X) \ar@{}[rrd]|{(1)}\ar[d]|{S(\theta)} \ar[rr]^-{S(\theta)} &&
S(C \diamond X ) \ar@{}[rrd]|{(2)} \ar[d]|{S(\delta \diamond X)} \ar[rr]^-{\chi_{C,X}} &&
 T(C) \diamond S(X) \ar[d]|{T(\delta) \diamond S(X)}\\
S(C \diamond X)\ar@{}[rrd]|{(3)}\ar[d]|{\chi_{C, X}} \ar[rr]_{S(C \diamond \theta)}&&
S(C \diamond C \diamond X)\ar@{}[rrd]|{(4)}\ar[d]|{\chi_{C, C \diamond X}}
\ar[rr]_-{\chi_{C \diamond C,X}}&& T(C \diamond C) \diamond S(X)
\ar[d]|{\omega_{C, C}\diamond S(X)}\\
T(C) \diamond S(X) \ar[rr]_-{T(C) \diamond S(\theta)}&&T(C) \diamond S(C \diamond X)
 \ar[rr]_{T(C) \diamond \chi_{C, X}} && T(C) \diamond T(C) \diamond S(X),}
$$ in which
\begin{itemize}
  \item rectangle (1) commutes since $(X,\theta)\in {^C\!\A}$,
  \item rectangle (2) and (3) commute by naturality of $\chi$,
  \item rectangle (4) commutes since $\bS$ is a $\bT$-comodule-monad
(see diagram (3.5) in \cite{A});
\end{itemize}
therefore the outer square (and hence (II)) is commutative. Thus,
$(X, \vartheta)\in {^{\bZ_0}\!\A}$, and since ${(S(X), (m_\bS)_X)\in \A_{\bS}}$,
in order to show that
$(S(X), (m_\bS)_X, \vartheta)\in {^\bZ\!(\A_{\bS}) }$,
we need commutativity of the diagram
$$
\xymatrix{
SS(X) \ar[d]|{SS(\theta)} \ar[rr]^-{(m_{\bS})_X} && S(X) \ar[d]|{S(\theta)}&&\\
SS(C \diamond X)\ar[d]|{S( \chi_{C, X})} \ar[rr]^-{(m_{\bS})_{C \diamond X}} &&
S(C \diamond X) \ar[rrd]^{\chi_{C, X}}&&\\
S(T(C) \diamond S(X)) \ar[rr]_-{\chi_{T(C), T(X)}}&&
T(T(C)) \diamond SS(X) \ar[rr]_-{(m_{\bT})_{C} \diamond (m_{\bS})_{C}} &&
T(C) \diamond S(X);}
$$
 since the rectangle commutes by naturality of $m_{\bS}$, while
the trapezoid commutes since $\bS$ is a $\bT$-comodule-monad
(see diagram (3.10) in \cite{A}) the outer paths commute, too.
\end{proof}

As an immediate consequence we obtain from \ref{com-fun}:

\begin{corollary}\label{coaction} For  $(X, \theta)\in {^C\!\A}$, the
$(X, \theta)$-component $\kappa_{(X, \theta)}: S(X) \to Z\diamond S(X)$ of the
$\bbG$-comodule structure on the composite
$\phi_{\bS}\, {^C\!U}:{^C\!\A} \to \A_{\bS}$ induced by the commutative diagram
{\rm(\ref{lemma.d})} is the composite
$$ S(X) \xrightarrow{S(\theta)} S(C\diamond X)
\xrightarrow{\chi_{C,X}}\mathcal{T}(C)\diamond S(X)=Z\diamond S(X).$$
\end{corollary}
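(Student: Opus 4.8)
The plan is to read the coaction off directly from the explicit form of the bijection recorded in \ref{com-fun}, using the object-level description of $K$ supplied by Lemma \ref{lemma1}; no fresh computation is required, which is exactly why the statement is labelled a corollary.

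First I would specialise \ref{com-fun} to the present data: take the ambient category there to be $\A_\bS$, the comonad to be the lifting $\bbG$, the domain to be ${}^C\!\A$, and the functor to be $F=\phi_\bS\,{}^C\!U:{}^C\!\A\to\A_\bS$. The commutative square (\ref{lemma.d}) says precisely that ${}^\bZ\!U\cdot K=F$, i.e. that $K$ realises the triangle of \ref{com-fun} with the forgetful functor $U^{\bbG}={}^\bZ\!U$. Hence the explicit correspondence between such functors and left $\bbG$-comodule structures in \ref{com-fun} applies and identifies the induced comodule structure $\kappa$ on $F$: by construction its $(X,\theta)$-component $\kappa_{(X,\theta)}$ is nothing but the $\bbG$-comodule-structure morphism of the object $K(X,\theta)$ of $(\A_\bS)^{\bbG}$.

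Next I would unwind that object. By Lemma \ref{lemma1}, $K(X,\theta)=(S(X),(m_\bS)_X,\vartheta)$ as a generalised Hopf module, i.e. as an object of ${}^\bZ\!(\A_\bS)$. Under the isomorphism ${}^\bZ\!(\A_\bS)\simeq(\A_\bS)^{\bbG}$ of \ref{lambda-bim} — which sends a triple $(Y,h,\vartheta)$ to the pair $((Y,h),\vartheta)$ — the underlying $\bS$-module of $K(X,\theta)$ is $\phi_\bS(X)=(S(X),(m_\bS)_X)=F(X,\theta)$, and the $\bbG$-coaction is literally the third entry $\vartheta$. Combining the two steps yields $\kappa_{(X,\theta)}=\vartheta$, and since $\vartheta$ was defined in Lemma \ref{lemma1} as the composite $S(X)\xrightarrow{S(\theta)}S(C\diamond X)\xrightarrow{\chi_{C,X}}\mathcal{T}(C)\diamond S(X)=Z\diamond S(X)$, this is exactly the asserted formula.

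The step needing most care is the bookkeeping of these two identifications: one must keep straight that the abstract comodule category $\A^\bG$ appearing in \ref{com-fun} is here $(\A_\bS)^{\bbG}$, and that the $\lambda$-bimodule presentation ${}^\bZ\!(\A_\bS)$ matches it via \ref{lambda-bim} in such a way that the coaction morphism is correctly read off as the third component of the triple. Once this matching is tracked, every ingredient — the explicit bijection of \ref{com-fun}, the object description of Lemma \ref{lemma1}, and the isomorphism of \ref{lambda-bim} — is already in hand, so there is no genuine computational obstacle and the corollary follows immediately.
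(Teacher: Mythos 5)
Your argument is correct and is precisely what the paper intends by deriving the corollary ``as an immediate consequence'' of \ref{com-fun}: one reads the coaction off from the explicit bijection in \ref{com-fun} applied to the functor $K$ of Lemma \ref{lemma1}, identifying $\kappa_{(X,\theta)}$ with the third component $\vartheta$ of $K(X,\theta)$ via the isomorphism of \ref{lambda-bim}. Your careful bookkeeping of the two identifications matches the paper's (unwritten) reasoning exactly.
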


Write $\mathcal{G}$ for the comonad on the category $\A_{\bS}$ generated by the adjunction
$$\phi_{\bS}\, {^C\!U} \dashv {^C\!\phi} \, U_{\bS}: \A_{\bS}\to {^C\!\A}.$$

\begin{proposition} For any $(X, h)\in \A_{\bS}$, the $(X, h)$-component of
the comonad morphism $t_K :\mathcal{G} \to \bbG$
induced by the commutative diagram {\rm(\ref{lemma.d})} is the composite
$$ \mathcal{G}(X,h)=S(C \diamond X) \xrightarrow{\chi_{C,X}} Z \diamond S(X)
 \xrightarrow{Z \diamond h} Z \diamond X=\bbG(X,h).$$
\end{proposition}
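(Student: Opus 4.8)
The plan is to evaluate the general description of the induced comonad morphism given in \ref{com-fun} in the present situation and then to collapse the resulting composite by two elementary cancellations. Applying \ref{com-fun} to the adjunction $\phi_\bS\,{^C\!U}\dashv{^C\!\phi}\,U_\bS$ and to the comonad $\bbG$ on $\A_\bS$, the comonad morphism $t_K:\mathcal{G}\to\bbG$ induced by the commutative diagram (\ref{lemma.d}) is the composite
$$\mathcal{G}=\phi_\bS\,{^C\!U}\,{^C\!\phi}\,U_\bS\xr{\kappa\,{^C\!\phi}\,U_\bS}\bbG\,\phi_\bS\,{^C\!U}\,{^C\!\phi}\,U_\bS\xr{\bbG\varsigma}\bbG,$$
where $\kappa$ is the $\bbG$-comodule structure on $\phi_\bS\,{^C\!U}$ computed in Corollary \ref{coaction} and $\varsigma$ denotes the counit of the adjunction. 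I would therefore fix $(X,h)\in\A_\bS$ and evaluate the two whiskered factors separately.

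First I would unwind the counit $\varsigma$. Writing the adjunction as the composite of $\phi_\bS\dashv U_\bS$ with ${^C\!U}\dashv{^C\!\phi}$ gives $\varsigma=\ve_\bS\cdot(\phi_\bS\,\ve^{\bG^l_C}\,U_\bS)$, where $\ve^{\bG^l_C}_X:C\diamond X\to X$ is the counit of the comonad $\bG^l_C$. Since the component of $\ve_\bS$ at $(X,h)$ is $h$, the underlying $\A$-morphism of $\varsigma_{(X,h)}$ is $h\cdot S(\ve^{\bG^l_C}_X):S(C\diamond X)\to X$; and as $\bbG$ is a lifting of $Z\diamond-$, the underlying morphism of $(\bbG\varsigma)_{(X,h)}=\bbG(\varsigma_{(X,h)})$ is $Z\diamond\bigl(h\cdot S(\ve^{\bG^l_C}_X)\bigr)$.

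Next I would evaluate the factor $\kappa\,{^C\!\phi}\,U_\bS$ at $(X,h)$, that is, evaluate $\kappa$ at the cofree comodule ${^C\!\phi}(X)=(C\diamond X,\,\delta^{\bG^l_C}_X)$, whose coaction is the comultiplication $\delta^{\bG^l_C}_X:C\diamond X\to C\diamond(C\diamond X)$ of $\bG^l_C$. Substituting this object into the formula of Corollary \ref{coaction} yields the underlying morphism $\chi_{C,\,C\diamond X}\cdot S(\delta^{\bG^l_C}_X)$. Assembling the two factors, the underlying $\A$-morphism of $(t_K)_{(X,h)}$ is
$$(Z\diamond h)\cdot\bigl(Z\diamond S(\ve^{\bG^l_C}_X)\bigr)\cdot\chi_{C,\,C\diamond X}\cdot S(\delta^{\bG^l_C}_X).$$

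The final step, which carries the only real content, is the collapse of this composite. I would first apply naturality of $\chi_{C,-}$ along $\ve^{\bG^l_C}_X:C\diamond X\to X$ to rewrite $\bigl(Z\diamond S(\ve^{\bG^l_C}_X)\bigr)\cdot\chi_{C,\,C\diamond X}=\chi_{C,X}\cdot S(C\diamond\ve^{\bG^l_C}_X)$, and then invoke the counit identity $(C\diamond\ve^{\bG^l_C}_X)\cdot\delta^{\bG^l_C}_X=1_{C\diamond X}$ of the comonad $\bG^l_C$ together with functoriality of $S$. What remains is precisely $(Z\diamond h)\cdot\chi_{C,X}$, i.e.\ the asserted composite $S(C\diamond X)\xr{\chi_{C,X}}Z\diamond S(X)\xr{Z\diamond h}Z\diamond X$. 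I expect the only bookkeeping obstacles to be the correct identification of the composite counit $\varsigma$ and the careful tracking of the associativity isomorphisms concealed in $\delta^{\bG^l_C}_X$, but neither of these enters the two cancellations that finish the argument.
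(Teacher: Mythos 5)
Your proposal is correct and follows essentially the same route as the paper: both decompose $t_K$ via \ref{com-fun} as the coaction of Corollary \ref{coaction} evaluated at the cofree comodule ${^C\!\phi}(X)$ followed by ($\bbG$ applied to) the counit of the composite adjunction, and both collapse the result using naturality of $\chi$ in its second argument together with the counit coherence of $C$. The only cosmetic difference is that you invoke the comonad counit identity of $\bG^l_C$ abstractly, whereas the paper unfolds $\delta^{\bG^l_C}$ and $\ve^{\bG^l_C}$ into their components and verifies the same cancellation by an explicit diagram involving $\alpha$, $r_C$ and the action coherence.
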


\begin{proof} Let $(X, h)\in \A_{\bS}$.
The $(X,h)$-component of the counit of the adjunction
$\phi_{\bS}\, {^C\! U} \dashv {^C\!\phi} \,U_{\bS}: \A_{\bS}\to {^C\!\A}$
is the composite
$$S(C \diamond X) \xrightarrow{S(\varepsilon \diamond X)} S(I \diamond X)
\xrightarrow{S(\lambda_X)} S(X) \xrightarrow{\,h} X;$$
it follows from \ref{com-fun} that the morphism
$$(t_K)_{(X,h)} :S(C \diamond X) =\mathcal{G}(X,h) \to \bbG(X,h)=Z\diamond X$$
 is the composite
$$ S(C \diamond X) \xrightarrow{\kappa_{_{^C\!\!\phi(X)}}}Z\diamond S(C \diamond X)
\xrightarrow{Z\diamond S(\varepsilon \diamond X)}Z\diamond S(I \diamond X)
\xrightarrow{Z\diamond S(\lambda_X)} Z\diamond S(X)
\xrightarrow{Z\diamond h}Z\diamond X.$$
From
$$^C\!\!\phi(X)=(C\diamond X,\, C \diamond X \xrightarrow{\delta \diamond X}
(C \otimes C) \diamond X \xrightarrow{\alpha_{C,C,X}} C \diamond (C \diamond X)),$$
we obtain by Corollary \ref{coaction} that $\kappa_{_{^C\!\phi(X)}}$ is the composite
$$S(C \diamond X) \xrightarrow{S(\delta \diamond X)} S((C \otimes C) \diamond X)
\xrightarrow{S(\alpha_{C,C,X})} S(C \diamond (C \diamond X))
\xrightarrow{\chi_{C,C \diamond X}} Z\diamond S(C \diamond X).$$
In the diagram
$$
\xymatrix{S(C \diamond X) \ar[rd]_{S(r^{-1}_C \diamond X)}\ar[r]^-{S(\delta \diamond X)}
 &S((C \otimes C) \diamond X)\ar[d]|{S((C \otimes \varepsilon) \diamond X)}
\ar[r]^-{S(\alpha_{C,C,X})} & S(C \diamond (C \diamond X))
\ar[d]|{S(C \diamond (\varepsilon \diamond X))}\ar[rr]^-{\chi_{C, C\diamond X}} &&
 Z \diamond S(C \diamond X)\ar[d]|{Z \diamond S(\varepsilon \diamond X)}\\
& S((C \otimes I) \diamond X) \ar[rd]_{S(r_C \diamond X)}\ar[r]_-{S(\alpha_{C,I,X})} &
S(C \diamond (I \diamond X)) \ar[d]|{S(C\diamond \lambda_X)}
\ar[rr]_-{\chi_{C, I\diamond X}} &&Z \diamond S(I \diamond X)
\ar[d]|{Z \diamond S(\lambda_X)}\\
&& S(C\diamond X) \ar[rr]_-{\chi_{C, X}} && Z \diamond S(X),}
$$
\begin{itemize}
  \item the three rectangles are commutative by naturality of $\alpha$ and $\chi$,
  \item the top triangle commutes since $\varepsilon$ is the counit for the coalgebra $C$,
  \item the bottom triangle commutes because $\diamond$ is a left action of $\V$ on $\A$.
\end{itemize}
Hence the outer paths commute and we have
$$\begin{array}{rl}
(t_K)_{(X,h)} & = \;(Z\diamond h)  (Z\diamond S(\lambda_X))
(Z\diamond S(\varepsilon \diamond X)) \kappa_{_{^C\!\!\phi(X)}}\\[+2mm]
&= \;(Z\diamond h)  (Z\diamond S(\lambda_X))
(Z\diamond S(\varepsilon \diamond X))\chi_{C,C \diamond X} S(\alpha_{C,C,X})
 (S(\delta \diamond X))\\[+2mm]
&= \;(Z\diamond h) \chi_{C, X} S(r_C \diamond X) S(r^{-1}_C \diamond X) =
(Z\diamond h)\chi_{C, X}. \end{array}$$
\end{proof}

Since for any $X \in \A$, $\phi_{\mathcal{S}}(X)=(S(X), (m_{\mathcal{S}})_X)$,
the following is immediate:

\begin{corollary}\label{free} For any $X \in \A$, the $\phi_{\mathcal{S}}(X)$-component
$(t_K)_{\phi_{\mathcal{S}}(X)}$ of the comonad morphism $t_K :\mathcal{G} \to \bbG$
is the composite
$$ S(C \diamond S(X)) \xrightarrow{\chi_{C,S(X)}} Z \diamond SS(X)
\xrightarrow{Z \diamond \,({m_{\mathcal{S}})_X}} Z \diamond S(X).$$
\end{corollary}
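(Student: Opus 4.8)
The plan is to obtain this statement by a direct specialisation of the preceding Proposition, since the free $\bS$-module functor has a completely explicit description. That Proposition computes, for an \emph{arbitrary} object $(X,h)\in\A_{\bS}$, the $(X,h)$-component of the comonad morphism $t_K:\mathcal{G}\to\bbG$ to be
$$\mathcal{G}(X,h)=S(C\diamond X)\xrightarrow{\chi_{C,X}}Z\diamond S(X)\xrightarrow{Z\diamond h}Z\diamond X=\bbG(X,h).$$
As $\phi_{\mathcal{S}}(X)$ is just one particular object of $\A_{\bS}$, its $t_K$-component is nothing but this formula evaluated there, so no new argument is required.

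First I would recall the standard description of the free functor $\phi_{\mathcal{S}}:\A\to\A_{\bS}$ associated with the monad $\bS$ (cf.\ \ref{monad-comon}), namely $\phi_{\mathcal{S}}(X)=(S(X),(m_{\mathcal{S}})_X)$, whose underlying object is $S(X)$ and whose structure map is the multiplication component $(m_{\mathcal{S}})_X$. I would then substitute $X\rightsquigarrow S(X)$ and $h\rightsquigarrow(m_{\mathcal{S}})_X$ into the composite above. This turns $\mathcal{G}(X,h)=S(C\diamond X)$ into $S(C\diamond S(X))$, turns the first arrow $\chi_{C,X}$ into $\chi_{C,S(X)}$ with codomain $Z\diamond SS(X)$, and turns the second arrow $Z\diamond h$ into $Z\diamond(m_{\mathcal{S}})_X$, whose codomain $Z\diamond X$ becomes $Z\diamond S(X)$.

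Reading off the result yields exactly the asserted composite
$$S(C\diamond S(X))\xrightarrow{\chi_{C,S(X)}}Z\diamond SS(X)\xrightarrow{Z\diamond(m_{\mathcal{S}})_X}Z\diamond S(X).$$
I expect no genuine obstacle here: all the substantive content sits in the Proposition, and the corollary only records the value of its formula on free modules. The sole point that needs care is the bookkeeping in the substitution $X\rightsquigarrow S(X)$, which is responsible for the double $S$ in the middle term and for the appearance of the monad multiplication $(m_{\mathcal{S}})_X$ in place of the generic action $h$.
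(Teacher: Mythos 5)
Your proposal is correct and matches the paper exactly: the paper likewise notes that $\phi_{\mathcal{S}}(X)=(S(X),(m_{\mathcal{S}})_X)$ and declares the corollary ``immediate'' from the preceding Proposition by this substitution. Your careful bookkeeping of $X\rightsquigarrow S(X)$, $h\rightsquigarrow(m_{\mathcal{S}})_X$ is precisely the intended (and only) content of the deduction.
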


Combining this with Theorem \ref{comparison} and with Proposition \ref{galois}
and using $(t_K)_{\phi_{\mathcal{S}}(X)}=(t_K\phi_{\mathcal{S}})_X$ yields:

\begin{theorem}\label{main1} Under the assumptions {\rm\ref{ass.1}, \ref{ass.2}},
the functor
$K: {^C\!\A} \to {^\bZ\!(\A_{\bS})}$  in a
commutative diagram {\rm(\ref{lemma.d})}  is an equivalence of categories if
and only if
\begin{itemize}
  \item [(i)] the functor $\phi_{\bS} {^C \!U} : {^C\!\A} \to \A_\bS$ is comonadic and
  \item [(ii)] the composite
$$ S(C \diamond S(X)) \xr{\chi_{C,S(X)}} Z \diamond SS(X)
\xr{Z \diamond \, ({m_{\mathcal{S}})_X}} Z \diamond S(X)$$
is an isomorphism for all $X \in \A$, or, equivalently,
$\phi_{\bS} {^C \!U} : {^C\!\A} \to \A_\bS$ is a $\bbG$-Galois comodule functor.
\end{itemize}
\end{theorem}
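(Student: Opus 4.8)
The plan is to recognize this as a direct instantiation of the general machinery already assembled, so that the real content has been pushed into the earlier propositions. Specifically, I would apply Theorem \ref{comparison} to the concrete situation of Assumptions \ref{ass.1} and \ref{ass.2}, where the abstract data $(\A, \bT, \bG, L \dashv R)$ of \ref{com-functor} is specialized by taking the monad to be $\bS$, the lifted comonad to be $\bbG$ (the lifting of $\bG^l_{\bZ_0}$ established just before Proposition \ref{entwining}), the adjunction $L \dashv R$ to be ${^C\!U} \dashv {^C\!\phi}$, and the comparison functor to be the $K$ constructed in Lemma \ref{lemma1}. Lemma \ref{lemma1} supplies exactly the commutative diagram (\ref{lemma.d}), which matches the hypothesis diagram (\ref{tri}) of \ref{com-functor} with $\phi_\bT L$ replaced by $\phi_\bS\,{^C\!U}$.

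Given this identification, Theorem \ref{comparison} tells me that $K$ is an equivalence if and only if (i) the composite $\phi_\bS\,{^C\!U}$ is comonadic, and (ii) $\phi_\bS\,{^C\!U}$ is $\bbG$-Galois. Condition (i) is carried over verbatim as clause (i) of the statement, so nothing further is needed there. For condition (ii), I would invoke Proposition \ref{galois}, which asserts that the comodule functor is $\bbG$-Galois precisely when $t_K\phi_\bS$ is an isomorphism; this establishes the stated equivalence between the two formulations in clause (ii).

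The only remaining task is to make the abstract isomorphism condition ``$t_K\phi_\bS$ is an isomorphism'' explicit in the present terms, and this is where I would lean on Corollary \ref{free}. That corollary has already computed the component $(t_K)_{\phi_\bS(X)}$ to be exactly the composite
$$S(C \diamond S(X)) \xrightarrow{\chi_{C,S(X)}} Z \diamond SS(X) \xrightarrow{Z \diamond (m_\bS)_X} Z \diamond S(X),$$
so using the naturality identity $(t_K)_{\phi_\bS(X)} = (t_K\phi_\bS)_X$, I can simply read off that $t_K\phi_\bS$ is an isomorphism if and only if this displayed composite is an isomorphism for every $X \in \A$. Chaining these together yields the biconditional in clause (ii).

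I do not expect a genuine obstacle here, since all the heavy lifting — the construction of $K$ and verification of the Hopf-module axioms (Lemma \ref{lemma1}), the computation of the comonad morphism $t_K$ and its free-module component (the intervening proposition and Corollary \ref{free}), and the reduction of the Galois condition to a pointwise invertibility check (Proposition \ref{galois}) — is already in place. The proof is therefore essentially an assembly step: verify that the present data genuinely instantiate the hypotheses of \ref{com-functor}, then cite Theorem \ref{comparison}, Proposition \ref{galois}, and Corollary \ref{free} in sequence. The one point demanding a moment's care is confirming that the lifting $\bbG$ and the distributive law $\lambda$ from Proposition \ref{entwining} are the correct ones, so that the category $^\bZ\!(\A_\bS)$ really is the $\lambda$-bimodule category $(\A_\bS)^{\bbG}$ as identified in \ref{gen-Hopf}; but this compatibility was already noted in the discussion preceding Proposition \ref{entwining}.
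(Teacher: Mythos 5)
Your proposal is correct and follows exactly the route the paper takes: the paper's entire ``proof'' is the sentence preceding the theorem, which combines Theorem \ref{comparison} (instantiated with $L={^C\!U}$, $R={^C\!\phi}$, monad $\bS$, lifted comonad $\bbG$), Proposition \ref{galois}, and Corollary \ref{free} via the identity $(t_K)_{\phi_{\bS}(X)}=(t_K\phi_{\bS})_X$. Nothing is missing.
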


In view of Proposition \ref{comonadicity}(i),  the preceding  theorem implies:

\begin{theorem}\label{split}  Assume that
$\A$ is Cauchy complete and that $e_\bS: I \to S$ is a split monomorphism.
Under the assumptions {\rm\ref{ass.1}, \ref{ass.2}},
 the functor $K: {^C\!\A} \to {^\bZ\!(\A_{\bS})}$ with commutative diagram
{\rm(\ref{lemma.d})} is an equivalence of categories
if and only if the functor $\phi_{\bS} {^C \!U} : {^C\!\A} \to \A_\bS$ is
$\bbG$-Galois.
\end{theorem}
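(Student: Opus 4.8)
The plan is to recognise Theorem~\ref{split} as the instance of Theorem~\ref{main1} in which condition~(i) is automatic, so that the equivalence of $K$ is controlled solely by the Galois condition~(ii). First I would make the generalised Hopf data fit the abstract framework of~\ref{com-functor}: the monad is $\bS$, the comonad on $\A$ is $\bG^l_{\bZ_0}$ (with $\bZ_0=U_\bT(\bZ)=T(C)$), the mixed distributive law is the $\lambda$ of Proposition~\ref{entwining}, and the adjunction $L\dashv R:\B\to\A$ is the forgetful-cofree adjunction
$${^C\!U}\dashv {^C\!\phi}\colon \A \to {^C\!\A},$$
so that $\B={^C\!\A}$ and $L={^C\!U}$. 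Under this identification the commutative square~(\ref{lemma.d}) is exactly the instance of~(\ref{tri}), $K$ is the functor produced in Lemma~\ref{lemma1}, and the composite $\phi_\bT L$ appearing in Proposition~\ref{comonadicity} becomes $\phi_\bS\,{^C\!U}$, matching the functor named in Theorem~\ref{main1}.

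Next I would check the three hypotheses of Proposition~\ref{comonadicity}(i). The category $\A$ is Cauchy complete by assumption, and the unit $e_\bS\colon I\to S$ of the monad $\bS$ is a split monomorphism by assumption; the remaining requirement is that $L={^C\!U}$ be comonadic. This is immediate, since by~\ref{action} one has ${^C\!\A}=\A^{\bG^l_C}$, the Eilenberg-Moore category of the comonad $\bG^l_C=C\diamond-$, and ${^C\!U}$ is its forgetful functor, which is comonadic by the dual of Beck's theorem. Proposition~\ref{comonadicity}(i) therefore applies and yields that $\phi_\bS\,{^C\!U}$ is comonadic, which is precisely condition~(i) of Theorem~\ref{main1}.

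With condition~(i) now automatic under the hypotheses of~\ref{split}, Theorem~\ref{main1} reduces to the assertion that $K$ is an equivalence of categories if and only if condition~(ii) holds, that is, if and only if $\phi_\bS\,{^C\!U}$ is $\bbG$-Galois; this is exactly the desired statement. There is no genuinely hard step: the only point requiring care is the bookkeeping of the identification above---confirming that ${^C\!\phi}$ is right adjoint to ${^C\!U}$ and that the latter is comonadic---after which the theorem follows formally by combining Proposition~\ref{comonadicity}(i) with Theorem~\ref{main1}, with no computation involved.
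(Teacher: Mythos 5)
Your proposal is correct and follows exactly the route the paper intends: the paper derives Theorem \ref{split} by the single remark ``In view of Proposition \ref{comonadicity}(i), the preceding theorem implies,'' and your write-up simply fills in the bookkeeping (identifying $L={^C\!U}$, noting its comonadicity as a forgetful functor from an Eilenberg--Moore category of coalgebras, and checking the hypotheses of Proposition \ref{comonadicity}(i)) before invoking Theorem \ref{main1}. No discrepancies to report.
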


We now obtain  the Fundamental Theorem of generalised Hopf modules
(see \cite[Theorem 5.3.1]{A}) as a particular case of Theorem \ref{main1}.

\begin{theorem}\label{aguiar} Under the assumptions {\rm\ref{ass.1}, \ref{ass.2}},
suppose that
\begin{itemize}
  \item [(i)]  $\A$ admits equalisers of coreflexive $\phi_\bS$-split pairs,
  \item [(ii)]  the functors $S$, $C \diamond -$, $C \diamond (C \diamond -):\A \to \A$ preserve these equalisers, and
  \item [(iii)] the functor $S$ is conservative.
\end{itemize}
Then the functor $K: {^C\!\A} \to {^\bZ\!(\A_{\bS})}$ in a commutative diagram
{\rm(\ref{lemma.d})} is an equivalence of categories
if and only if the functor $\phi_{\bS} {^C \!U} : {^C\!\A} \to \A_\bS$ is
$\bbG$-Galois.
\end{theorem}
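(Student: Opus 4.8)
The plan is to read Theorem~\ref{aguiar} as a sharpening of Theorem~\ref{main1}: the latter already asserts that $K$ is an equivalence exactly when $\phi_\bS\,{^C\!U}:{^C\!\A}\to\A_\bS$ is \emph{both} comonadic \emph{and} $\bbG$-Galois, so it suffices to show that hypotheses (i)--(iii) force $\phi_\bS\,{^C\!U}$ to be comonadic; the asserted equivalence ``$K$ is an equivalence iff $\phi_\bS\,{^C\!U}$ is $\bbG$-Galois'' then falls out. I would work in the instance of \ref{com-functor} with $\B={^C\!\A}$, $L={^C\!U}$, $R={^C\!\phi}$, monad $\bS$, and lifted comonad $\bbG$, recording two free facts: ${^C\!U}$ is comonadic, being the forgetful functor of the comonad $C\diamond-$ on $\A$ (cf.\ \ref{action}), so that $LR=C\diamond-$ and $LRLR=C\diamond(C\diamond-)$; and $\phi_\bS\,{^C\!U}$ has the right adjoint ${^C\!\phi}\,U_\bS$ already exhibited before Corollary~\ref{free}.

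To prove comonadicity I would verify the three clauses of the dual of Beck's theorem, exactly as in the proof of Proposition~\ref{comonadicity} (cf.\ \cite{Mc}). The right adjoint is in hand. Conservativity of $\phi_\bS\,{^C\!U}$ is immediate from (iii): if $\phi_\bS\,{^C\!U}(f)$ is invertible then so is $U_\bS\phi_\bS\,{^C\!U}(f)=S\,{^C\!U}(f)$, whence ${^C\!U}(f)$ is invertible since $S$ is conservative, and then $f$ is invertible since the comonadic functor ${^C\!U}$ is conservative. The remaining, and main, clause is that ${^C\!\A}$ has, and $\phi_\bS\,{^C\!U}$ preserves, equalisers of coreflexive $\phi_\bS\,{^C\!U}$-split pairs.

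For that clause, let $(f,g)$ be a coreflexive $\phi_\bS\,{^C\!U}$-split pair in ${^C\!\A}$. Applying ${^C\!U}$, the image $(f_0,g_0)={^C\!U}(f,g)$ is again coreflexive and, since $\phi_\bS(f_0,g_0)=\phi_\bS\,{^C\!U}(f,g)$ is part of a split equaliser, it is a coreflexive $\phi_\bS$-split pair in $\A$. By (i) it has an equaliser $e:W\to X$ in $\A$, and by (ii) this $e$ is preserved by $S$, by $C\diamond-$ and by $C\diamond(C\diamond-)$. I would then split the argument into two halves. First, because ${^C\!U}$ is comonadic and its comonad functor $C\diamond-$ together with its square $C\diamond(C\diamond-)$ preserve $e$, the functor ${^C\!U}$ creates the equaliser of $(f,g)$ (the dual of the standard creation-of-coequalisers lemma, cf.\ \cite[Proposition~4.3.2]{Bo} as in Proposition~\ref{comonadicity}(ii)); thus $(f,g)$ has an equaliser $\tilde e$ in ${^C\!\A}$ with ${^C\!U}(\tilde e)=e$, preserved by ${^C\!U}$. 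Second, since $U_\bS$ creates limits and $S=U_\bS\phi_\bS$ sends $e$ to the equaliser $S(e)$ of $S(f_0,g_0)$, the equaliser of $\phi_\bS(f_0,g_0)$ in $\A_\bS$ has underlying object $S(W)$ with a \emph{unique} $\bS$-module structure making $S(e)$ a module map; as $\phi_\bS(e)=S(e)$ is already such a map, uniqueness forces that structure to be the free one $(m_\bS)_W$, so the equaliser is $\phi_\bS(W)$ and $\phi_\bS$ preserves $e$. Composing the two halves, $\phi_\bS\,{^C\!U}(\tilde e)=\phi_\bS(e)$ is the equaliser of $\phi_\bS\,{^C\!U}(f,g)$, which is the required preservation.

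With all three clauses verified, the dual of Beck's theorem makes $\phi_\bS\,{^C\!U}$ comonadic, and Theorem~\ref{main1} then yields the claimed equivalence. I expect the only delicate point to be the two-step treatment of the last clause: one must match (i)--(ii) precisely to the creation property of the comonadic ${^C\!U}$ (this is exactly why both $C\diamond-$ and its square $C\diamond(C\diamond-)$ occur in (ii)), and then argue separately that the uniqueness in the creation of limits by $U_\bS$ pins the lifted module structure to the free one, so that it is $\phi_\bS$---and not merely $S$---that preserves the equaliser. It is worth noting that conservativity of $S$ in (iii) is what replaces the Cauchy-completeness and effective-descent hypotheses of Proposition~\ref{comonadicity}, so no appeal to those is needed here.
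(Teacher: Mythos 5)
Your proposal is correct, and its overall strategy --- reduce to Theorem~\ref{main1} by proving that $\phi_\bS\,{^C\!U}$ is comonadic --- is exactly the paper's. The difference is in how comonadicity is obtained. The paper first uses (i)--(iii) to show that $\phi_\bS$ itself is comonadic (so $\bS$ is of effective descent type) and then, observing that $\phi_\bS$-split pairs are $S$-split, invokes Proposition~\ref{comonadicity}(ii) with $L={^C\!U}$. You instead verify the dual Beck conditions for the composite directly, inlining the two ingredients that Proposition~\ref{comonadicity}(ii) itself cites: creation of the relevant equalisers by the comonadic ${^C\!U}$ from their preservation by $C\diamond-$ and $C\diamond(C\diamond-)$ (\cite[Proposition 4.3.2]{Bo}), and the transfer from ``$S$ preserves'' to ``$\phi_\bS$ preserves'' via creation of limits by $U_\bS$ (the content of \cite[Proposition 3.11]{Me}). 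Your route has two small advantages: it never appeals to the blanket Cauchy-completeness hypothesis of Proposition~\ref{comonadicity}, which Theorem~\ref{aguiar} does not assume (and which is in fact not used in the proof of part (ii) of that proposition); and it works throughout with coreflexive $\phi_\bS$-split pairs, the class actually covered by (i)--(ii), whereas the proposition formally demands the larger class of coreflexive $S$-split pairs --- the paper's bridge ``any $\phi_\bS$-split pair is automatically $S$-split'' points the inclusion in the direction that does not by itself deliver that hypothesis, though no harm results since the proof of \ref{comonadicity}(ii) only ever applies it to pairs coming from $\phi_\bS L$-split pairs. What the paper's route buys is brevity and reuse of an already proved statement. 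Your two flagged delicate points (the creation step for ${^C\!U}$, and pinning the lifted $\bS$-module structure on $S(W)$ to the free one) are both handled correctly.
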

\begin{proof} Since the functor $S$ is conservative and the category  $\A$ admits - and
the functor $S$ preserves - equalisers of coreflexive $\phi_\bS$-split pairs, it
follows from the dual of Beck's monadicity theorem (see \cite{Mc}) that the functor
$\phi_\bS :\A \to \A_\bS$
is comonadic, or equivalently, the monad $\bS$ is of effective descent type.
Since any $\phi_\bS$-split pair
is automatically $S$-split, we may apply Proposition \ref{comonadicity}(ii) to deduce that
the functor $\phi_{\bS} {^C\! U} : {^C\!\A} \to \A_\bS$ is comonadic. The result
now follows from Theorem \ref{main1}.
\end{proof}

\section{Bimonoids in duoidal categories}\label{bimod}

In \cite{AM}, Aguiar and Mahajan generalised bialgebras over fields to {\em bimonoids} in
  {\em  duoidal categories}, that is, categories with two monoidal structures  $\ast$
and  $\circ$.
Any object $A$ in such a category induces endofunctors $-\ast A$ and $-\circ A$
and for $A$ being a bimonoid
these functors have to be a comonad and a monad, respectively, related by a
mixed distributive law
(\cite[Definition 6.25]{AM}, compare \cite[Proposition 6.3]{MW-Bim}).
In \cite{BCZ}, B\"ohm, Chen and Zhang studied which structures are required to define
{\em Hopf monoids} in such categories. In this section we outline how their
Fundamental Theorem for  Hopf modules
(see \cite[Section 3.4]{BCZ}) can be seen as special case of the results
in the Sections \ref{sec-gen} and \ref{g-Hopf}.

Recall from \cite{AM} that duoidal  categories $\D$ are equipped with two
monoidal structures $(\D, \circ, I)$ and $(\D, \ast, J)$, along with a natural
transformation
$$\zeta_{W,X,Y,Z}:(W \circ X)\ast (Y \circ Z) \to (W \circ Y)\ast (X \circ Z),$$
 called
the \emph{interchange law}, and three morphisms
 $$\Delta: I \to I \ast I,\,\; \mu: J \circ J \to J, \,\; \tau: I \to J,$$
such that the conditions for \emph{associativity, unitality} and
\emph{compatibility of the units}   are satisfied.
For example, the compatibility of the units  means that the monoidal units $I$ and $J$
satisfy
\begin{itemize}
  \item  $(J, \mu, \tau)$ is a monoid in the monoidal category $(\D, \circ, I)$  and
  \item  $(I, \Delta, \tau)$ is a comonoid in the monoidal category $(\D, \ast, J)$.
\end{itemize}

It is pointed out in \cite{AM} that if $(\D, \circ, I, \ast, J)$ is a duoidal category
with interchange law
$\zeta$, then $(\D^{\text{op}},\ast,J,\circ,I)$ is also a duoidal category,
called the \emph{opposite}
duoidal category of $\D$. The interchange law
$\overline{\zeta}_{W,X,Y,Z}:(W \circ X) \ast (Y \circ Z) \to
(W \ast Y) \circ (X \ast Z)$ for this is given by the
$\D$-morphism $\zeta_{W,Y,X,Z}:(W \ast Y) \circ (X \ast Z)
\to (W \circ X) \ast (Y \circ Z)$.

\begin{thm}\label{bimonoid}{\bf Bimonoids.} \em A \emph{bimonoid} in a duoidal
category $\D$ is an object $A$
with a monoid structure $(A, m, e)$ in $(\D, \circ, I)$ and  a comonoid structure
$(A, \delta, \ve)$ in $(\D, \ast, J)$ inducing commutativity of the diagrams
$$
\xymatrix{
A \circ A \ar@{}[drr]|{\rm(I)}\ar[r]^-{m} \ar[d]_-{\delta \circ \delta}&
A \ar[r]^-{\delta}& A \circ A \\
(A \ast A)\circ (A \ast A) \ar[rr]_-{\zeta}&& (A \circ A)\ast (A \circ A)
\ar[u]_-{m\ast m},}
$$
$$
 \xymatrix{
A\circ A \ar@{}[dr]|{\rm(II)} \ar[r]^-{\ve \circ \ve}
\ar[d]_m & J\circ J  \ar[d]^\mu \\
 A \ar[r]_\ve & J , } \qquad
\xymatrix{
I \ar@{}[dr]|{\rm(III)} \ar[r]^-{e} \ar[d]_-{\Delta} & A \ar[d]^-{\delta}\\
I \ast I \ar[r]_-{e\ast e}& A \ast A,}
 \qquad
\xymatrix{ I \ar[r]^e \ar[dr]_\tau & A \ar@{}[dl]|(.35){\rm(IV)} \ar[d]^\ve\\
         & J.}$$

A morphism of bimonoids is a morphism of the underlying monoids and comonoids.
\end{thm}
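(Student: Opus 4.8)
Strictly speaking, \ref{bimonoid} is a \emph{definition} rather than an assertion: it specifies the data of a bimonoid in $\D$ (a $\circ$-monoid $(A,m,e)$ and a $\ast$-comonoid $(A,\delta,\ve)$ carried by one object $A$), declares the squares (I)--(IV) to be its defining compatibility axioms, and fixes what a morphism of bimonoids is. Consequently there is no statement to prove; the only thing one would \emph{check} is that the definition is well-posed, namely that each of the diagrams (I)--(IV) is composable. My plan is therefore simply to read off, from the structure maps together with the duoidal data $\zeta$, $\Delta$, $\mu$, $\tau$ recalled just before the statement, the source and target of every arrow and confirm that consecutive arrows match.

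Carrying this out is routine and involves no calculation. In (II) both routes $A\circ A\to J$ typecheck because $(J,\mu,\tau)$ is a $\circ$-monoid, so that $\mu\colon J\circ J\to J$; in (III) both routes $I\to A\ast A$ typecheck because $(I,\Delta,\tau)$ is a $\ast$-comonoid, so that $\Delta\colon I\to I\ast I$; and (IV) is simply the triangle asserting $\ve\,e=\tau\colon I\to J$. The only point deserving attention is (I): its top edge is $A\circ A\xrightarrow{m}A\xrightarrow{\delta}A\ast A$, so its upper right-hand vertex must be read as the codomain $A\ast A$ of $\delta$; the lower route then composes $\delta\circ\delta$, the interchange law $\zeta$ (with all four arguments equal to $A$, relating $(A\ast A)\circ(A\ast A)$ to $(A\circ A)\ast(A\circ A)$), and $m\ast m$, and likewise lands in $A\ast A$. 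There is thus no genuine obstacle, only this minor reading of the vertex.

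For context, the real mathematical content that this definition is preparing for is not any property of (I)--(IV) but the Fundamental Theorem for $A$-Hopf modules of \cite{BCZ}, announced at the start of Section \ref{bimod}. Proving \emph{that} is a separate task: one exhibits $-\circ A$ as a monad and $-\ast A$ as a comonad on $\D$, verifies that the axioms (I)--(IV) amount to a mixed distributive law in the sense of \ref{mixed}, and then invokes the comparison and Galois results of Section \ref{sec-gen} (in particular Theorems \ref{comparison} and \ref{main1}). That argument, however, belongs to the forthcoming theorem and lies outside the present definition.
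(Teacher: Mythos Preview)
Your reading is correct: \ref{bimonoid} is a definition, and the paper gives no proof for it either---it simply states the data and axioms and moves on. Your observation that the upper-right vertex of diagram (I) must be read as $A\ast A$ (the codomain of $\delta$ and of $m\ast m$) rather than the printed $A\circ A$ is also right; this is a typo in the paper.
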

Recall \cite[Proposition 6.27]{AM} that the tensor units I and J carry a unique
bimonoid structure and that
the morphism $\tau: I \to J$ is a morphism of bimonoids.

Fix a duoidal  category $(\D, \circ, I, \ast, J)$ and a bimonoid
$(A, m, e, \delta, \ve)$ in $\D$. Since
$(A, A\circ A \xr{m} A, e:I \xr{e} A)$ is a monoid in the monoidal category
$(\D, \circ, I)$, while $(A,A \xr{\delta} A \ast A, A \xr{\varepsilon} J)$
is a comonoid in the monoidal category $(\D, \ast, J)$, one has by
\ref{action} the monad $\bT^r_A$ and the comonad $\bG^r_A$ on $\D$.
Recall that the functor part of the monad $\bT^r_A$ (resp. comonad $\bG^r_A$)
is the functor $-\circ A : \D \to \D$
(resp. $-\ast A : \D \to \D$).

It is shown in \cite{BS} that $\bT^r_A $ is an opmonoidal monad on the monoidal
category $(\D, \ast, J)$, with
the structure morphisms
$$\overline{\omega}_{X,Y}:(X \ast Y)\circ A \xr{(X \ast Y)\circ \delta}
(X \ast Y)\circ (Y \ast A)\xr{\zeta} (X \circ A)\ast (Y \circ A), $$
  $$\overline{\xi}:J \circ A \xr{J \circ \ve} J \circ I \xr{\simeq} J.$$
It follows that
the category $\D_A=\D_{\bT^r_A}$ is monoidal. Note that $((A, m),\delta, \ve)$
is an object of $\textbf{\text{Mon}}(\D_A)$: Clearly
$(A, m)\in \D_A$ and the comultiplication $\delta$ and the counit $\ve$ of $A$ are
morphisms of right $A$-comodules by the diagrams
(I) and (II) in \ref{bimonoid}, respectively. Hence $((A,m),\delta,\varepsilon)$
is a $\bT^r_A$-module-comonoid.

Thus, we have
\begin{itemize}
  \item  the opmonoidal monad $\bT^r_A$  on the monoidal category
  $(\D, \ast, J)$,
  \item the left $(\D, \ast, J)$-category structure on $\D$ given
by $X \diamond Y= X \ast Y$,
  \item the $\bT^r_A$-comodule-monad $(\bT^r_A,\, \overline{\omega})$ on $\A$, and
  \item the $\bT^r_A$-module-comonoid $((A,m),\delta,\varepsilon)$.
\end{itemize}

Hence, we may apply the results of Section 2 to the present situation.
In particular, Proposition \ref{entwining} gives (see also \cite[Section 2]{BCZ}):

\begin{proposition}\label{entwining1} The natural transformation
$$\lambda: (-\ast A) \circ A \xr{(-\ast A) \circ \delta} (-\ast A) \circ (A\ast A)
 \xr{\zeta} (-\circ A) \ast (A \circ A)
\xr{(-\circ A) \ast m} (-\circ A)\ast A$$
is a mixed distributive law from the monad $\bT^r_A$ to the comonad $\bG^r_A$.
\end{proposition}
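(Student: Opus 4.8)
The plan is to obtain Proposition \ref{entwining1} as a direct instance of the general formula for the entwining $\lambda$ established in Proposition \ref{entwining}. Recall that Proposition \ref{entwining} asserts that, in the situation of \ref{comodule} (Assumption 1), the mixed distributive law $\lambda$ from the monad $\bS$ to the comonad $\bG^l_{\bZ_0}$ is the composite
$$S(Z\diamond -)\xr{\chi_{Z,-}}T(Z)\diamond S(-)\xr{\sigma\diamond S(-)}Z\diamond S(-).$$
So the essential content of the present proposition is bookkeeping: one must substitute the specific data coming from the bimonoid $A$ in the duoidal category $\D$ into this abstract formula and verify that the result simplifies to the displayed composite $(-\circ A)\ast\delta$ followed by $\zeta$ followed by $(-\circ A)\ast m$.

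First I would record the dictionary between the abstract setting and the present one, as already assembled just before the statement: the left $\V$-category is $(\D,\diamond)=(\D,\ast)$; the opmonoidal monad is $\bT=\bT^r_A$ acting as $-\circ A$ on the $\ast$-monoidal category $(\D,\ast,J)$, with opmonoidal structure $\overline{\omega}$ and $\overline{\xi}$; the comodule-monad is $(\bS,\chi)=(\bT^r_A,\overline{\omega})$, so $S=-\circ A$ and $\chi=\overline{\omega}$; and the $\bT$-module-comonoid is $\bZ=((A,m),\delta,\ve)$, so $Z=A$ and the action morphism $\sigma:T(Z)\to Z$ is the monoid multiplication $m:A\circ A\to A$. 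With $\diamond=\ast$ and these identifications, the abstract composite $(\sigma\diamond S(-))\cdot\chi_{Z,-}$ becomes $((-)\ast m)\cdot\overline{\omega}_{-,A}$ applied in the variable that feeds into $-\circ A$.

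Next I would expand $\overline{\omega}_{X,A}$ using its definition, namely
$$\overline{\omega}_{X,A}:(X\ast A)\circ A\xr{(X\ast A)\circ\delta}(X\ast A)\circ(A\ast A)\xr{\zeta}(X\circ A)\ast(A\circ A).$$
Postcomposing with $\sigma\diamond S(X)=(X\circ A)\ast m$ (the $\ast$-tensor of the identity on $X\circ A$ with the multiplication $m:A\circ A\to A$), one reads off exactly the three-step composite
$$(-\ast A)\circ A\xr{(-\ast A)\circ\delta}(-\ast A)\circ(A\ast A)\xr{\zeta}(-\circ A)\ast(A\circ A)\xr{(-\circ A)\ast m}(-\circ A)\ast A,$$
which is precisely the natural transformation $\lambda$ in the statement. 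Since Proposition \ref{entwining} already guarantees that this composite \emph{is} a mixed distributive law from $\bS=\bT^r_A$ to $\bG^l_{\bZ_0}=\bG^r_A$, nothing further needs to be checked.

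The only genuine point requiring care is the matching of conventions, and this is where I expect the main (if mild) obstacle to lie. One must confirm that the left action $\diamond=\ast$ is being applied on the correct side so that $-\circ A$ really plays the role of the functor $S(-)=S$ in \ref{comodule}, and that the roles of the two monoidal products do not get swapped: here the comonad $\bG^r_A=-\ast A$ arises from the $\ast$-structure while the monad $\bT^r_A=-\circ A$ is opmonoidal with respect to $\ast$, so one is working with a \emph{right}-handed action and must transpose the \emph{left}-handed formulas of \ref{comodule} and \ref{entwining} accordingly. Once it is confirmed that $Z=A$, $\sigma=m$, $\chi=\overline{\omega}$, and $\diamond=\ast$ are substituted consistently with this right-handed reading, the displayed $\lambda$ drops out of Proposition \ref{entwining} immediately, completing the proof.
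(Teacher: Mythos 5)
Your proposal is correct and takes essentially the same route as the paper: Proposition \ref{entwining1} is obtained there simply by applying the general formula of Proposition \ref{entwining} to the data $\V=(\D,\ast,J)$, $S=T=-\circ A$, $\chi=\overline{\omega}$, $Z=A$, $\sigma=m$, with no further argument given. Your explicit attention to the handedness of the action $\diamond=\ast$ (so that $Z\diamond X$ comes out as $X\ast A$) is precisely the bookkeeping the paper leaves implicit.
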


We write $\D^A_A$ for the category
$(\D_{\bT^r_A})^{\widehat{\bG^r_A}}=(\D_A)^{\widehat{\bG^r_A}}$,
where ${\widehat{\bG^r_A}}$
is the lifting of $\bG^r_A$ to $\D_{\bT^r_A}=\D_A$ corresponding to the
mixed distributive law $\lambda$.
This is called the category of  $A$-\emph{Hopf modules} in \cite[Section 3]{BCZ}.
Thus, an $A$-Hopf module is a triple $(X,h,\vartheta)$ such that
$(X,\,h:X \circ A \to A)$ is a right $A$-module in the monoidal category $(\D, \circ, I)$,
$(X, \,\vartheta: X \to X \ast A)$ is a right $A$-comodule in the monoidal category
 $(\D, \ast, J)$
with commutative diagram
$$
\xymatrix{X\circ A\ar[d]_{\vartheta \circ A} \ar[r]^{h} & X \ar[r]^{\vartheta\quad} &
 X \ast A \\
(X \ast A)\circ A  \ar[rr]_-{\lambda_X}&& (X \circ A)\ast A \ar[u]_{h \ast A}.}
$$
Since $(I, \Delta, \tau)$ is a comonoid in the monoidal category $(\D, \ast, J)$,
one has the category $\D^I$ of $I$-comodules on this monoidal category.
Recall that $\D^I$ is the Eilenberg-Moore category of $\bG^r_I$-comodules.
Now Lemma \ref{lemma1} implies:

\begin{thm}\label{comparison1}{\bf Comparison functor $-\circ A: \D^I \to \D^A_{A}$.} \em
 The assignment
$$(X, \theta) \rightsquigarrow (X \circ A, \overline{m},\overline{\vartheta}),$$
where $\overline{m}:(X \circ A)\circ A \to X \circ A$
is the composite $$(X \circ A)\circ A \xr{a_{X,A,A}} X \circ (A\circ A) \xr{X \circ \, m} X \circ A,$$
while
$\overline{\vartheta}: X \circ A \to (X\circ A)\ast A$ is the composite
$$X\circ A \xr{\vartheta \circ A} (X \ast I)\circ A \xr{ (X \ast I)\circ \delta}
(X \ast I) \circ (A \ast A)
\xr{\zeta} (X\circ A)\ast (I \circ A)\simeq (X\circ A)\ast A,$$
yields a comparison functor
 $K = -\circ A: \D^I \to \D^A_{A}$
with commutative  diagram
$$
\xymatrix{ \D^I \ar[rr]^-{K} \ar[d]_{U^I} && \D^A_{\!A} \ar[d]^{U^{\widehat{\bG^r_A}}}\\
\D \ar[rr]^{\phi_{A}}&& \D_{\!A}.}
$$
\end{thm}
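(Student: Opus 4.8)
The plan is to derive this statement directly from Lemma \ref{lemma1}, specialized to the duoidal data assembled in the paragraphs above. First I would fix the dictionary between the two settings: take $\V=(\D,\ast,J)$ as the ambient monoidal category, $\A=\D$ as a $\V$-category under the action $X\diamond Y=X\ast Y$, the coinciding monad and comodule-monad $\bT=\bS=\bT^r_A$ (so that $S=-\circ A$ and $\chi=\overline{\omega}$), and $C=I$ viewed as a comonoid in $(\D,\ast,J)$ via $(\Delta,\tau)$. With these choices the four hypotheses of Assumption \ref{ass.1} are precisely the four items listed just before the theorem, and Assumption \ref{ass.2} holds with $\bZ=\bT^r_A(I)$. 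Lemma \ref{lemma1} then yields a functor $\D^I\to\D^A_A$ sitting in the required commutative square, and the bulk of the work is to recognise its output as the pair $(\overline{m},\overline{\vartheta})$ written in the statement.

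For the module component this is immediate: the lemma outputs $(m_\bS)_X$, which by \ref{action} is nothing but the multiplication of the monad $-\circ A$, i.e.\ the composite $(X\circ A)\circ A\xr{a_{X,A,A}}X\circ(A\circ A)\xr{X\circ m}X\circ A$ defining $\overline{m}$. For the comodule component I would substitute $S(\theta)=\theta\circ A$ and $\chi_{X,I}=\overline{\omega}_{X,I}$ (the indices appearing in the right-handed order explained below) into the lemma's formula $\vartheta=\chi_{X,I}\cdot S(\theta)$, and then expand $\overline{\omega}_{X,I}$ by its definition; after the canonical identification $I\circ A\simeq A$, under which the object $Z=\bT^r_A(I)$ becomes $A$, this reproduces exactly the displayed composite for $\overline{\vartheta}$. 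Commutativity of the square is then automatic, since stripping the $A$-comodule structure off $K(X,\theta)$ leaves $(X\circ A,\overline{m})=\phi_A(X)=\phi_A U^I(X,\theta)$.

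The one genuinely delicate point is handedness. Lemma \ref{lemma1} is phrased for \emph{left} $C$-comodules and the left action of $\V_\bT$ on $\A_\bS$, whereas $\D^I$ and $\D^A_A$ are built from the \emph{right} comonad $-\ast A$ and the right module functor $-\circ A$; correspondingly the lemma's codomain $Z\diamond S(X)$ would read $A\ast(X\circ A)$ rather than the $(X\circ A)\ast A$ demanded by $\overline{\vartheta}$. I would resolve this by invoking the right-handed mirror of the whole of Section \ref{g-Hopf}, which is available by the symmetry of the constructions recorded in \ref{action} and \ref{mon-comon} and which simply interchanges the two tensor factors throughout. The remaining thing I would need to confirm is that the module-comonoid $\bT^r_A(I)$ supplied by \ref{alg-comonoid} agrees, under $I\circ A\simeq A$, with the bimonoid $((A,m),\delta,\varepsilon)$: the multiplications match on the nose, while the comultiplication and counit unwind to the compatibility axioms (III) and (IV) of \ref{bimonoid} together with the explicit forms of $\overline{\omega}$ and $\overline{\xi}$.
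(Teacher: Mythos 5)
Your proposal matches the paper's own derivation: the paper obtains \ref{comparison1} precisely by specialising Lemma \ref{lemma1} to the duoidal data assembled just beforehand ($\V=(\D,\ast,J)$, $\A=\D$ with $\diamond=\ast$, $\bT=\bS=\bT^r_A$, $\chi=\overline{\omega}$, $C=I$ with $(\Delta,\tau)$, and $\bZ=\bT^r_A(I)\simeq A$), stating the result as an immediate consequence without further argument. Your extra care over the left/right mirror of Lemma \ref{lemma1} and over identifying $\bT^r_A(I)$ with the bimonoid $((A,m),\delta,\varepsilon)$ via $I\circ A\simeq A$ only makes explicit what the paper leaves implicit.
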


Write $\mathcal{G}$ for the comonad on the category $\D_{A}$ generated by the adjunction
$$\phi_{A} {U^I} \dashv \phi^I U_{A}: \D_{A}\to {\D^I}.$$

\begin{proposition}\label{coaction1} For any $(X,h)\in \D_A$, the $(X,h)$-component
of the comonad morphism $t_K :\mathcal{G} \to \widehat{\bG^r_{\!A}}$
induced by the diagram in {\rm \ref{comparison1}}, is the composite
$$ (X \ast I)\circ A \xr{ (X \ast I)\circ \delta} (X \ast I) \circ (A \ast A)
\xr{\zeta} (X\circ A)\ast (I \circ A)\simeq (X\circ A)\ast A \xr{h \ast A}X \ast A.$$

For any $X\in \D$,
$\phi_A(X)=(X \circ A,\, (X \circ \, m)\cdot a_{X,A,A}:(X\circ A)\circ A\to X\circ A),$
and the  $\phi_A(X)$-component $(t_K)_{\phi_A(X)}$ of $t_K$ is the composite
$$
\xymatrix@C=40pt @R=30pt{
((X \circ A)\ast I)\circ A \ar@{..>}[d]|{(t_K)_{\phi_A(X)}}
\ar[r]^-{((X\circ A) \ast I)\circ \delta} & ((X\circ A) \ast I) \circ (A \ast A)
\ar[r]^-{\zeta} & ((X\circ A)\circ A)\ast (I \circ A) \ar[d]^\simeq \\
(X \circ A) \ast A &(X\circ (A\circ A))\ast A \ar[l]^-{(X \circ\, m) \ast A} &
((X\circ A)\circ A)\ast A \ar[l]^\simeq \, .}
$$
\end{proposition}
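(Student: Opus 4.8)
The plan is to obtain the formula for the comonad morphism $t_K$ as a direct specialisation of the general computation already carried out in the preceding (unlabelled) Proposition, namely the one computing $(t_K)_{(X,h)}$ for the generalised Hopf module setting of Lemma \ref{lemma1}. Indeed, the present situation is exactly that setting with the left $\V$-action taken to be $X \diamond Y = X \ast Y$, the opmonoidal monad $\bT$ taken to be $\bT^r_A$ on $(\D, \ast, J)$, the comodule-monad $\bS$ taken to be $\bT^r_A$ itself with structure $\overline{\omega}$, the comonoid $C$ taken to be $I$ (with its $\ast$-comonoid structure $(\Delta,\tau)$), and the module-comonoid $\bZ = \bT^r_A(I)$, which as a right $A$-module is $I \circ A \simeq A$. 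Under this dictionary the composite $\chi_{C,X}$ becomes $\overline{\omega}_{I,X}$, and the general formula $(t_K)_{(X,h)} = (Z \diamond h)\cdot \chi_{C,X}$ from the earlier Proposition translates into precisely the claimed composite $(X \ast I)\circ A \to (X\circ A)\ast A \xrightarrow{h \ast A} X \ast A$.

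So the first step is to state this translation carefully, identifying each piece of the abstract data with its duoidal counterpart and unwinding the definition of $\overline{\omega}_{I,X}:(I \ast X)\circ A \to (I \circ A)\ast(X \circ A)$ together with the unitality isomorphisms. In particular I would verify that $Z \diamond S(X) = Z \ast (X \circ A) \simeq X \ast A$ after using $I \circ A \simeq A$, so that the target of $t_K$ is the expected $X \ast A$ rather than a literal $(I\circ A)\ast(X\circ A)$. The only care needed here is bookkeeping of the unitality (left/right identity) coherence isomorphisms, which is routine but must be tracked to match the diagram exactly.

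For the second assertion — the $\phi_A(X)$-component — I would simply substitute $(X,h) = \phi_A(X) = (X\circ A, \overline{m})$ into the formula just obtained, exactly as Corollary \ref{free} is deduced from the earlier Proposition in the generalised Hopf setting. Here the object $X$ of the first assertion is replaced by $X \circ A$ and the action $h$ is replaced by $\overline{m} = (X \circ m)\cdot a_{X,A,A}$; feeding these into the composite and re-expanding $\overline{\omega}_{I, X\circ A}$ in terms of the interchange law $\zeta$ and $\delta$ yields the displayed pentagon-shaped diagram. The required equalities are again only the coherence identifications $I \circ A \simeq A$ and the associativity isomorphism, so the diagram commutes by naturality of $\zeta$, naturality of the unit isomorphisms, and the explicit form of $\overline{m}$.

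The main obstacle — really the only nontrivial point — is to confirm that the abstract specialisation is faithful, i.e.\ that the $\chi$ of the general framework genuinely coincides with the $\overline{\omega}$ produced by $\zeta$ and $\delta$ in \cite{BS}, and that the counit and unitality morphisms match up so that no stray coherence isomorphism is dropped. Once this identification is pinned down, both formulas are immediate transcriptions of the earlier Proposition and its Corollary \ref{free}, and no fresh computation is required; the proof reduces to citing those results under the established dictionary.
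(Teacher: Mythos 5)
Your proposal matches the paper's reasoning exactly: the paper gives no separate proof of this proposition, treating it (as you do) as the direct specialisation of the general unlabelled Proposition computing $(t_K)_{(X,h)}=(Z\diamond h)\cdot\chi_{C,X}$ and of Corollary \ref{free} to the dictionary $\V=(\D,\ast,J)$, $\diamond=\ast$, $\bS=\bT^r_A$, $\chi=\overline{\omega}$, $C=I$, $\bZ=\bT^r_A(I)\simeq A$. Your attention to the unit coherence $I\circ A\simeq A$ (which rests on the duoidal unitality axioms identifying $\bT^r_A(I)$ with $((A,m),\delta,\ve)$) is precisely the only point the paper itself leaves implicit.
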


Applying now Theorem \ref{main1} yields:

\begin{theorem}\label{comparison2} Let $(\D,\circ,I,\ast,J)$ be a duoidal category and
$(A, m, e, \delta, \ve)$ a bimonoid  in $\D$. Then
the comparison functor $K: \D^I \to \D_{\!A}^A$ is an equivalence of categories if
and only if
\begin{itemize}
  \item [(i)]  the functor $\phi_A U^I : \D^I \to \D_A$ is comonadic  and
  \item [(ii)] the morphism ${(t_K)_{\phi_A(X)}}$ (in {\rm\ref{coaction1}})
        is an isomorphism for any $X \in \D$, or, equivalently,
        $\phi_A U^I : \D^I \to \D_A$ is a $\bG^r_A$-Galois comodule functor.
\end{itemize}
\end{theorem}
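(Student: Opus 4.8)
The plan is to exhibit the duoidal situation as a concrete instance of the framework of Section~\ref{g-Hopf} and then to read off the conclusion from Theorem~\ref{main1}. The dictionary I would set up is: take $\V=(\D,\ast,J)$ as the ambient monoidal category and $\bT=\bT^r_A=-\circ A$ as the opmonoidal monad on it, with structure morphisms $\overline{\omega},\overline{\xi}$ furnished by~\cite{BS}; regard $\A=\D$ as a $\V$-category via the action $X\diamond Y=X\ast Y$; take $\bS=\bT^r_A$, now viewed as a monad on $\A$, as the $\bT$-comodule-monad with comodule structure $\chi=\overline{\omega}$; and finally put $C=I$, the $\circ$-unit with its comonoid structure $(I,\Delta,\tau)$ in $(\D,\ast,J)$, so that $\bZ=\bT^r_A(I)$. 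Because the duoidal Hopf modules involve \emph{right} $A$-modules and \emph{right} $A$-comodules, this instantiation must be carried out in the right-handed (symmetric) form of Section~\ref{g-Hopf}, the one indexed by the right comonad $\bG^r_A=-\ast A$ rather than $A\ast-$.

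With the dictionary fixed, the first task is to confirm that Assumptions~\ref{ass.1} and~\ref{ass.2} genuinely hold, the only non-formal point being that the abstract module-comonoid $\bT^r_A(I)$ coincides with the triple $((A,m),\delta,\ve)$ displayed just before Proposition~\ref{entwining1}. I would verify this by unwinding the recipe of~\ref{alg-comonoid} at $C=I$: under the identification $I\circ A\cong A$ the module structure $m_I$ reduces to $m$, the comultiplication $\overline{\omega}_{I,I}\cdot(\Delta\circ A)$ reduces to $\delta$, and the counit $\overline{\xi}\cdot(\tau\circ A)$ reduces to $\ve$, each reduction being a short manipulation with the interchange law $\zeta$ and the unit constraints. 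Once this is in place the three categories match on the nose, $\A_\bS=\D_A$, ${^C\!\A}=\D^I$ and ${^\bZ\!(\A_\bS)}=\D^A_A$, the lifted comonad $\bbG$ is exactly $\widehat{\bG^r_A}$, and the entwining of Proposition~\ref{entwining} specialises to the $\lambda$ of Proposition~\ref{entwining1}.

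The remaining step is to recognise the comparison functor and the Galois datum. The functor $K$ produced by Lemma~\ref{lemma1} sends $(X,\theta)$ to $(S(X),(m_\bS)_X,\vartheta)$; substituting $S=-\circ A$, $C=I$ and $\chi=\overline{\omega}$ turns this into precisely the assignment $-\circ A$ of~\ref{comparison1}, so the two comparison diagrams coincide. Likewise, the composite of Corollary~\ref{free} that appears in condition~(ii) of Theorem~\ref{main1} becomes, under the dictionary, the morphism $(t_K)_{\phi_A(X)}$ computed in Proposition~\ref{coaction1}, and the assertion that $\phi_\bS\,{^C\!U}$ is $\bbG$-Galois becomes the assertion that $\phi_A U^I$ is $\bG^r_A$-Galois. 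Conditions~(i) and~(ii) of Theorem~\ref{main1} then translate verbatim into conditions~(i) and~(ii) of the present statement, and invoking that theorem finishes the argument.

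I expect the only real obstacle to be the left/right and $\circ$-versus-$\ast$ bookkeeping: one must be sure that the abstract objects $\bT^r_A(I)$, $K$ and $t_K$ unwind to exactly the concrete duoidal expressions of~\ref{comparison1} and Proposition~\ref{coaction1}, and that the correct (right-handed) incarnation of the framework is used throughout. Since each of these concrete computations has already been carried out in Propositions~\ref{entwining1} and~\ref{coaction1} and in~\ref{comparison1}, the proof proper amounts to assembling those identifications and citing Theorem~\ref{main1}.
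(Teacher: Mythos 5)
Your proposal coincides with the paper's own argument: the paper sets up exactly this dictionary (the four bullet points before Proposition~\ref{entwining1}, taking $C=I$ with its comonoid structure $(I,\Delta,\tau)$ in $(\D,\ast,J)$ and $\bZ=\bT^r_A(I)\cong((A,m),\delta,\ve)$), records the resulting comparison functor and the components of $t_K$ in \ref{comparison1} and Proposition~\ref{coaction1}, and then obtains the theorem by directly citing Theorem~\ref{main1}. The identifications you flag as the only real obstacle are precisely the ones the paper carries out in those numbered items, so nothing is missing.
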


\begin{theorem}\label{unit.split}Let $(\D, \circ, I, \ast, J)$ be a duoidal
category with Cauchy complete $\D$.
If the morphism $\tau: I \to J$ is a split monomorphism, then for any bimonoid
$(A, m, e, \delta, \ve)$ in $\D$, the  functor
${K: \D^I \to \D_{\!A}^A}$ (in {\rm \ref{comparison1}}) is an equivalence of categories if
and only if $\phi_A U^I : \D^I \to \D_A$ is $\bG^r_A$-Galois.
\end{theorem}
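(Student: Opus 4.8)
The plan is to derive this as a direct specialisation of Theorem~\ref{split} to the duoidal setting already set up in Section~\ref{bimod}. Theorem~\ref{split} says that if $\A$ is Cauchy complete and the unit $e_\bS:I\to S$ of the comodule-monad $\bS$ is a split monomorphism, then $K$ is an equivalence if and only if $\phi_\bS\,{}^C\!U$ is $\bbG$-Galois. So the whole task reduces to checking that the hypotheses of Theorem~\ref{split} hold in the present instance, namely with $\V=(\D,\ast,J)$, the left $\V$-category $\A=\D$ acting by $X\diamond Y=X\ast Y$, the comodule-monad $\bS=\bT^r_A$ (whose functor part is $-\circ A$), and the comonoid $C=I$ so that the $\bT$-module-comonoid is $\bZ=\bT^r_A(I)$. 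Under this dictionary $\phi_\bS={}^{}\phi_A$, the category ${}^C\!\A$ becomes $\D^I$, and ${}^\bZ\!(\A_\bS)$ becomes $\D^A_A$; the comparison functor $K$ is exactly the $-\circ A:\D^I\to\D^A_A$ from \ref{comparison1}.

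The one genuine point to verify is that the unit $e_\bS:I\to S$, i.e. the unit $e_{\bT^r_A}$ of the monad $-\circ A$, is a split monomorphism precisely when $\tau:I\to J$ is. First I would recall that the unit of $\bT^r_A$ is, on an object $X$, the morphism $X\cong X\circ I\xr{X\circ e}X\circ A$ induced by the monoid unit $e:I\to A$ of the bimonoid. I would then produce a natural retraction from the counit $\ve:A\to J$ of the comonoid part of $A$ together with the splitting of $\tau$. Concretely, if $s:J\to I$ satisfies $s\tau=1_I$, then the composite $A\xr{\ve}J\xr{s}I$ is a candidate retraction of $e$ after the identification $X\circ I\cong X$: the bimonoid axiom~(IV) in \ref{bimonoid} gives $\ve\,e=\tau$, so $s\,\ve\,e=s\tau=1_I$, and smashing with $X\circ-$ yields a natural transformation $e':\,-\circ A\to\mathrm{Id}$ with $e'e=1$. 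Thus $e_{\bT^r_A}$ is split monic. (The converse is not needed for the stated ``if''.)

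With that splitting in hand, the hypotheses of Theorem~\ref{split} are met: $\D$ is Cauchy complete by assumption, and $e_{\bT^r_A}$ is a split monomorphism by the previous step. Theorem~\ref{split} then yields immediately that $K:\D^I\to\D^A_A$ is an equivalence if and only if $\phi_A U^I:\D^I\to\D_A$ is $\widehat{\bG^r_A}$-Galois, which by Theorem~\ref{comparison2}(ii) is the same as being $\bG^r_A$-Galois. Concluding, I would simply invoke Theorem~\ref{split} applied through the translation established at the start of this section.

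The step I expect to be the main obstacle is verifying the splitting of $e_{\bT^r_A}$, and in particular that the retraction built from $s$ and $\ve$ is genuinely natural in $X$ and really does split the monad unit after the coherent identifications $X\circ I\cong X$ are taken into account. The unit isomorphisms $r$ for the $\circ$-monoidal structure and the identification $J\circ I\cong J$ appearing in $\overline{\xi}$ have to be tracked carefully so that the equation $e'e=1$ holds as a natural transformation of endofunctors, not merely objectwise; once the relevant naturality squares for $r$ are checked to commute, the rest is a formal consequence of bimonoid axiom~(IV).
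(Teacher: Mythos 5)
Your proposal is correct and follows essentially the same route as the paper: both reduce to Theorem \ref{split} and both obtain the splitting of the monad unit of $-\circ A$ from the bimonoid axiom (IV), $\ve\cdot e=\tau$, combined with the assumed retraction of $\tau$. Your write-up is in fact more explicit than the paper's (which merely asserts that $e$ split monic implies the monad unit is split monic), since you spell out the natural retraction $X\circ(s\,\ve)$ and the coherence with the unit isomorphisms $X\circ I\cong X$.
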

\begin{proof}Since $\ve \cdot e=\tau $ (see Diagram (IV) in \ref{bimonoid}) and
since $\tau $ is a split monomorphism by hypothesis, so also is the morphism $e:I \to A$.
It then follows that the unit of the monad $\bT^r_A=- \circ A$ is a split monomorphism
and Theorem \ref{split} completes the proof.
\end{proof}

The following elementary observation is of use for our investigations.

\begin{proposition}\label{split1} Let $(\D, \circ, I, \ast, J)$ be a duoidal category with Cauchy complete $\D$.
If the unit of the adjunction
$$\xymatrix{
\D^I \ar@/^.6pc/[r]^-{U^I} \ar@{}[r]|-{\perp} &\D \ar@{}[r]|-{\perp}
\ar@/^.6pc/[r]^-{\phi_J} \ar@/^.6pc/[l]^-{\phi^I} &\D_J \ar@/^.6pc/[l]^-{U_J}}
$$
is a split monomorphism, then for any bimonoid $(A, m, e, \delta, \ve)$ in $\D$,
the functor $\D^I \xr{U^I}\D \xr{\phi_A}\D_A$ is comonadic.
\end{proposition}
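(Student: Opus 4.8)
The plan is to recognise $\phi_A U^I$ as a composite of left adjoints and to feed it into Proposition \ref{split.comon}. Indeed $U^I\dashv \phi^I$ and $\phi_A\dashv U_A$, so $\phi_A U^I$ admits the right adjoint $\phi^I U_A$; write $\eta^A:1_{\D^I}\to \phi^I U_A\,\phi_A U^I$ for the unit of this composite adjunction and $\eta^J:1_{\D^I}\to \phi^I U_J\,\phi_J U^I$ for the unit of the hypothesised adjunction $\phi_J U^I\dashv \phi^I U_J$, which is a split monomorphism by assumption. Proposition \ref{split.comon} also requires the domain $\D^I$ to be Cauchy complete; this I would record first, noting that an idempotent on an $I$-comodule splits in $\D$ (which is Cauchy complete) and that the retract inherits a unique $I$-coaction turning the splitting maps into comodule morphisms, so idempotents split in $\D^I$ as well.

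The heart of the argument is to transfer the split-mono property from $\eta^J$ to $\eta^A$. By Diagram (IV) of \ref{bimonoid} we have $\varepsilon\cdot e=\tau$, so $\varepsilon:A\to J$ is a morphism of monoids in $(\D,\circ,I)$ and the whiskered map $-\circ\varepsilon:\bT^r_A\to\bT^r_J$ is a monad morphism; in particular it is compatible with the units, $e_{\bT^r_J}=(-\circ\varepsilon)\cdot e_{\bT^r_A}$. Expanding both composite-adjunction units through the unit $\eta^{U^I}:1_{\D^I}\to\phi^I U^I$ of $U^I\dashv\phi^I$ and the respective monad units gives $\eta^A=\phi^I(e_{\bT^r_A}U^I)\cdot\eta^{U^I}$ and $\eta^J=\phi^I(e_{\bT^r_J}U^I)\cdot\eta^{U^I}$, whence $\eta^J=\phi^I\big((-\circ\varepsilon)\,U^I\big)\cdot\eta^A$. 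Precomposing a chosen retraction of $\eta^J$ with $\phi^I\big((-\circ\varepsilon)\,U^I\big)$ then produces a retraction of $\eta^A$, so the unit $\eta^A$ is a split monomorphism.

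With this in hand I would conclude by applying Proposition \ref{split.comon} to the adjunction $\phi_A U^I\dashv \phi^I U_A$ together with the trivial (identity) factorisation of $\phi_A U^I$ through itself: since $\D^I$ is Cauchy complete and the unit $\eta^A$ is split mono, part (iii) says that $\phi_A U^I$ is comonadic precisely when it has a right adjoint, and it does, namely $\phi^I U_A$. Hence $\phi_A U^I:\D^I\to\D_A$ is comonadic.

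The only non-formal point is the unit-factorisation identity $\eta^J=\phi^I\big((-\circ\varepsilon)\,U^I\big)\cdot\eta^A$; everything there reduces to the compatibility $e_{\bT^r_J}=(-\circ\varepsilon)\cdot e_{\bT^r_A}$ coming from $\tau=\varepsilon\cdot e$, so I expect the main care to lie in correctly expanding the two composite-adjunction units rather than in any genuine difficulty. I would resist the temptation to factor $\phi_J U^I$ through $\phi_A U^I$ by an ``extension of scalars'' functor $\D_A\to\D_J$, since such a functor need not exist under the sole hypothesis that $\D$ is Cauchy complete; the detour through the unit $\eta^A$ avoids any colimit assumptions.
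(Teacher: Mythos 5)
Your proof is correct, but it takes a genuinely different route from the paper's. The paper exploits the monoid morphism $\ve\colon A\to J$ by making $J$ an $(A,A)$-bimodule, using Remark \ref{remark} to obtain the functor $J\otimes_A-$ on the Kleisli category of $-\circ A$ (where the defining coequalisers are split, so no colimit hypotheses are needed), and thereby factoring $\phi_J U^I$ as $(J\otimes_A-)\,\overline{\phi}_A U^I$; Proposition \ref{split.comon}(i),(ii) applied to this factorisation yields that $\overline{\phi}_A U^I$ is conservative and that its coreflexive split pairs have split equalisers, and after transferring along the embedding into $\D_A$ the dual of Beck's theorem finishes the argument. You use the same structural input --- the unit compatibility $\tau=\ve\cdot e$ from diagram (IV) of \ref{bimonoid} --- but at the level of units of adjunctions: your identity $\eta^J=\phi^I\bigl((-\circ\ve)U^I\bigr)\cdot\eta^A$ is the standard expansion of the unit of a composite adjunction combined with $e_{\bT^r_J}=(-\circ\ve)\cdot e_{\bT^r_A}$, and it correctly transfers the splitting of $\eta^J$ to $\eta^A$. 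This establishes something slightly stronger than what the paper extracts, namely that $\phi_A U^I$ is itself a separable left adjoint out of a Cauchy complete category, after which comonadicity follows from \ref{split.comon}(iii) with the identity factorisation (equivalently, directly from \cite[Proposition 3.16]{Me}, on which the paper's proof of \ref{split.comon}(i) rests). Your route avoids the Kleisli category and tensor products over $A$ altogether, at the price of a small computation with units; two minor points are that only the unit part of the monad morphism $-\circ\ve$ (i.e.\ diagram (IV), not (II)) is actually used in your argument, and that your claim that $\D^I$ inherits Cauchy completeness from $\D$ is the same (correct) assertion the paper makes without proof.
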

\begin{proof} Note first that the commutativity of the diagrams (II) and (IV)
in \ref{bimonoid} allow to consider
 $\ve: A \to J$ as a morphism from the monoid $(A, m, e)$ to the monoid $(J, \mu, \tau)$
in the monoidal category $(\D, \circ, I)$. Then the composites
$A \circ J \xr{\ve \circ J} J \circ J \xr{\mu} J $ and
$J \circ A \xr{J \circ\, \ve} J \circ J \xr{\mu}J$ give the structure of an
$(A,A)$-bimodule on $J$, and so,
by Remark \ref{remark}, the triangle in the diagram
$$
\xymatrix{\D^I \ar@{=}[d]\ar[r]^-{U^I} &\D \ar[dr]|{\phi_J=J \otimes -}
\ar[r]^-{\overline{\phi}_A} &
\widetilde\D_\bbT\ar[d]^{J \otimes_A-}\\
\D^I \ar[r]_-{U^I} &\D \ar[r]_-{\phi_J} &\D\!_J}
$$ is commutative, implying that the outer diagram is also commutative.
Since $\D$ is assumed to be Cauchy
complete, so also is $\D^I$. Now apply Proposition \ref{split.comon} to conclude that
the composite $\overline{\phi}_A U^I$ is conservative, and that any coreflexive
$(\overline{\phi}_A U^I)$-split pair of morphisms has a split equaliser in $\D^I$.

Next, since $\iota\, \overline{\phi}_A=\phi_A$, where $\iota:\widetilde\D_\bbT \to \D_A$
is the canonical embedding, the composite $\overline{\phi}_A U^I$ is conservative
if and only if $\iota\,\overline{\phi}_A U^I=\phi_A U^I$
is conservative, and a pair of morphisms in $D^I$ is $\overline{\phi}_A U^I$-split
if and only if it is $\phi_A U^I$-split. Thus,
$\overline{\phi}_A U^I$ is conservative and any $\overline{\phi}_A U^I$-split pair
of morphisms
has a split equaliser in $\D^I$. It follows -- since the composite $\phi_A U^I$ admits
as a right adjoint the composite $\phi^I U_A$ -- that $\overline{\phi}_A U^I$ is comonadic.
\end{proof}

Combining Propositions \ref{comparison2} and \ref{split1} we obtain:

\begin{theorem}\label{main2} In the situation of Proposition {\rm\ref{split1}},
the  functor
${K: \D^I \to \D_{\!A}^A}$ (in {\rm \ref{comparison1}}) is an equivalence of categories if
and only if $\phi_A U^I : \D^I \to \D_A$ is $\bG^r_A$-Galois.
\end{theorem}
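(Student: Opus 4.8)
The plan is to obtain this statement directly by combining the two results immediately preceding it, Theorem \ref{comparison2} and Proposition \ref{split1}; no new computation is required. Theorem \ref{comparison2} already supplies a complete characterisation: for a duoidal category $(\D,\circ,I,\ast,J)$ and a bimonoid $A$, the comparison functor $K : \D^I \to \D_{\!A}^A$ is an equivalence of categories if and only if both (i) $\phi_A U^I : \D^I \to \D_A$ is comonadic and (ii) $\phi_A U^I$ is $\bG^r_A$-Galois. The entire content of the present theorem is that the extra hypotheses inherited from Proposition \ref{split1} render clause (i) automatic, so that the equivalence of $K$ becomes governed by clause (ii) alone.

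Accordingly, the first step is simply to observe that we are placed in the situation of Proposition \ref{split1}: $\D$ is Cauchy complete and the relevant unit is a split monomorphism, while $(\D,\circ,I,\ast,J)$ together with the bimonoid $A$ already furnish exactly the data required to invoke Theorem \ref{comparison2}. Applying Proposition \ref{split1} then yields, for every bimonoid $A$, that $\phi_A U^I : \D^I \to \D_A$ is comonadic, which is precisely condition (i) of Theorem \ref{comparison2}.

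The second and final step is to feed this into Theorem \ref{comparison2}. Since (i) now holds unconditionally, that theorem collapses to the assertion that $K$ is an equivalence if and only if (ii) holds, i.e.\ if and only if $\phi_A U^I$ is $\bG^r_A$-Galois (equivalently, by Proposition \ref{coaction1}, that $(t_K)_{\phi_A(X)}$ is an isomorphism for every $X \in \D$); this is exactly the claim. I do not expect any genuine obstacle here, since all of the substantive labour has already been carried out in the earlier proofs — the reduction to comonadicity-plus-Galois in Theorem \ref{comparison2} (via Theorem \ref{main1}) and the comonadicity argument in Proposition \ref{split1} (which itself rests on Proposition \ref{split.comon} and on the splitting of $\tau$, hence of $e$). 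The only care needed is the bookkeeping: verifying that the comonadicity conclusion of Proposition \ref{split1} matches clause (i) of Theorem \ref{comparison2} verbatim, and that imposing the stronger hypotheses of the present theorem does not forfeit any assumption demanded by \ref{comparison2}.
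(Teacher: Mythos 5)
Your proposal is correct and is exactly the paper's own argument: the text introduces Theorem \ref{main2} with the single line ``Combining Propositions \ref{comparison2} and \ref{split1} we obtain:'', i.e.\ Proposition \ref{split1} discharges the comonadicity clause (i) of Theorem \ref{comparison2}, leaving only the $\bG^r_A$-Galois condition (ii). No further comment is needed.
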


Since a left adjoint functor is full and faithful if and only if the unit of the
adjunction is an isomorphism (hence a split monomorphism), it follows immediately:

\begin{corollary}\label{f.and.f} Let $(\D, \circ, I, \ast, J)$ be a duoidal category
with Cauchy complete $\D$.
If the composite $\phi_J U^I: \D^I \to \D_J$ is full and faithful,
then, for any bimonoid $(A, m, e, \delta, \ve)$ in $\D$,
the functor ${K: \D^I \to \D_{A}^A}$ (in {\rm \ref{comparison1}}) is an equivalence of
categories if and only if $\phi_A U^I : \D^I \to \D_A$ is $\bG^r_A$-Galois.
\end{corollary}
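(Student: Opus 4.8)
The plan is to recognise Corollary \ref{f.and.f} as a direct specialisation of Theorem \ref{main2}: it suffices to check that the hypothesis that $\phi_J U^I$ is full and faithful places us in the situation of Proposition \ref{split1}, that is, that it forces the unit of the relevant composite adjunction to be a split monomorphism. To this end I would first identify the adjunction in question. The functor $\phi_J U^I : \D^I \to \D_J$ is the composite of the two left adjoints $U^I$ (left adjoint to $\phi^I$) and $\phi_J$ (left adjoint to $U_J$), and is therefore itself a left adjoint, with right adjoint $\phi^I U_J : \D_J \to \D^I$. This is precisely the composite adjunction $\phi_J U^I \dashv \phi^I U_J$ displayed in Proposition \ref{split1}, and its unit $\eta : 1_{\D^I} \to \phi^I U_J \phi_J U^I$ is the one appearing in the hypothesis there (rather than the unit of either constituent adjunction).

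Next I would invoke the standard fact, recalled just before the statement, that a left adjoint functor is full and faithful if and only if the unit of its adjunction is an isomorphism. Applied to $\phi_J U^I \dashv \phi^I U_J$, the assumption that $\phi_J U^I$ is full and faithful gives that $\eta$ is an isomorphism, hence in particular a split monomorphism, being split by its own inverse. Since $\D$ is moreover assumed Cauchy complete, both conditions in the hypothesis of Proposition \ref{split1} are now satisfied, so we are in its situation and Theorem \ref{main2} applies verbatim, yielding that $K : \D^I \to \D^A_A$ is an equivalence of categories if and only if $\phi_A U^I : \D^I \to \D_A$ is $\bG^r_A$-Galois, as required.

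There is no genuine obstacle here, which is why the statement follows \emph{immediately}; the only points demanding care are bookkeeping rather than mathematical. One must correctly match ``the unit of the adjunction'' in Proposition \ref{split1} with the unit of the composite $\phi_J U^I \dashv \phi^I U_J$, confirm that $\phi_J U^I$ is genuinely a left adjoint so that the full-and-faithful criterion applies, and observe that an isomorphism is automatically a split monomorphism, so that the stronger hypothesis of this corollary indeed implies the weaker hypothesis under which Theorem \ref{main2} was proved.
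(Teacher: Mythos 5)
Your proposal is correct and follows exactly the paper's own route: the authors likewise note just before the corollary that a left adjoint is full and faithful if and only if the unit of its adjunction is an isomorphism (hence a split monomorphism), so the hypothesis places one in the situation of Proposition \ref{split1} and Theorem \ref{main2} applies. Your added bookkeeping about identifying the unit of the composite adjunction $\phi_J U^I \dashv \phi^I U_J$ is a faithful elaboration of what the paper leaves implicit.
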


Considering any bimonoid $(A, m, e, \delta, \ve)$ in $\D$ as a bimonoid in
$\D^{\text{op}}$ (see \cite[Remark 6.26]{AM}) and applying
the duality explained in \cite{AM}, the Theorems \ref{comparison2},
\ref{main2} and Corollary \ref{f.and.f} yield the following:

\begin{theorem}\label{d.comparison2} Let $(\D, \circ,I,\ast,J)$ be a duoidal category and
$(A, m, e, \delta, \ve)$ a bimonoid  in $\D$. Then
the comparison functor
$$K'= -\ast A:\D_J \to \D_{\!A}^A$$
 is an equivalence of categories if and only if
\begin{itemize}
  \item [(i)]  the functor $\phi^A U_J : \D_J \to \D^A$ is monadic and
  \item [(ii)] $\phi^A U_J : \D_J \to \D^A$ is a $\bT^r_{A}$-Galois module functor, or,
equivalently, the following composite is an isomorphism for all $X \in \D$:
$$
\xymatrix@C=40pt @R=30pt{
(X \ast A) \circ A \ar@{..>}[d]|{(t_{K'})_{\phi^A(X)}} \ar[r]^-{(X \ast \delta) \circ A}&
 (X\ast (A\ast A))\circ A \ar[r]^\simeq & ((X\ast A)\ast A)\circ A   \ar[d]^\simeq \\
((X \ast A)\circ J)\ast A  & ((X\ast A) \circ J) \ast (A \circ A)
 \ar[l]_-{ ((X \ast A) \circ J)\ast m}
& ((X\ast A)\ast A)\circ (J \ast A)  \ar[l]_-{\zeta}.}
$$
\end{itemize}
\end{theorem}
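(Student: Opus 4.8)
The plan is to deduce the statement from Theorem \ref{comparison2} by means of the duoidal duality $\D \leftrightarrow \D^{\text{op}}$, exactly as signalled in the sentence preceding the theorem. Recall from the excerpt that $(\D^{\text{op}}, \ast, J, \circ, I)$ is again a duoidal category, with interchange law $\overline{\zeta}$ whose components are those of $\zeta$ with the two inner arguments transposed, and that, by \cite[Remark 6.26]{AM}, the bimonoid $(A, m, e, \delta, \ve)$ in $\D$ is again a bimonoid in $\D^{\text{op}}$. First I would fix the translation dictionary imposed by this duality: $\circ \leftrightarrow \ast$ and $I \leftrightarrow J$; the monad $\bT^r_A = -\circ A$ and the comonad $\bG^r_A = -\ast A$ on $\D$ become, respectively, a comonad and a monad on $\D^{\text{op}}$; the Eilenberg--Moore categories $\D_A$ of right $A$-modules and $\D^A$ of right $A$-comodules are interchanged, as are $\D^I$ and $\D_J$; while the Hopf-module category $\D^A_A$ is carried to itself.

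Next I would apply Theorem \ref{comparison2} \emph{verbatim inside} $\D^{\text{op}}$ to the bimonoid $A$. There the opmonoidal monad is $-\ast A$ (the first, now $\ast$, monoidal product) and the relevant comonoid is $J$, so the comparison functor of \ref{comparison2} is $-\ast A$ from the category of $J$-comodules in $(\D^{\text{op}}, \circ, I)$ to the $A$-Hopf modules; read back in $\D$ this is precisely $K' = -\ast A : \D_J \to \D^A_{\!A}$. Its criterion --- that $\phi_A U^I$ be comonadic and $\bG^r_A$-Galois --- translates under the arrow-reversal of the duality into the assertion that $\phi^A U_J : \D_J \to \D^A$ be monadic and $\bT^r_A$-Galois: since the Eilenberg--Moore categories in $\D^{\text{op}}$ are the opposites of those in $\D$, ``comonadic'' becomes ``monadic'' and ``comodule-Galois'' becomes ``module-Galois''. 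This already yields condition (i) and the abstract form of condition (ii).

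Finally I would make condition (ii) explicit by dualising the formula for $(t_K)_{\phi_A(X)}$ in Proposition \ref{coaction1}. Under the duality the opmonoidal structure $\overline{\omega}$ of $\bT^r_A$ on $(\D, \ast, J)$ is replaced by the opmonoidal structure of $-\ast A$ on $(\D^{\text{op}}, \circ, I)$, built from $\overline{\zeta}$ together with the $\circ$-comultiplication of $A$ in $\D^{\text{op}}$ (namely $m$), while the monad multiplication of $-\circ A$ (coming from $m$) is replaced by that of $-\ast A$ in $\D^{\text{op}}$ (coming from $\delta$); moreover $\overline{\zeta}$ may be displayed using the components of $\zeta$. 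Substituting these data and reversing every arrow carries the composite of \ref{coaction1} onto the composite displayed in \ref{d.comparison2}(ii). The main obstacle is precisely this last bookkeeping step: the abstract equivalence is immediate once the dictionary is in place, but one must verify that arrow-reversal together with the substitution $\zeta \leftrightarrow \overline{\zeta}$ matches \ref{coaction1} term for term --- in particular aligning the coherence isomorphisms $I \circ A \cong A$ with $J \ast A \cong A$, and confirming that the reversal places $\delta$ at the source end and $m$ at the target end (the opposite order from the $\D$-computation). The analogous transport of Theorem \ref{main2} and Corollary \ref{f.and.f} then supplies the Cauchy-complete and full-and-faithful refinements by the same mechanism.
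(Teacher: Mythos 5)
Your proposal matches the paper's own derivation exactly: the paper offers no written proof for this theorem beyond the sentence "Considering any bimonoid \ldots as a bimonoid in $\D^{\text{op}}$ \ldots and applying the duality \ldots the Theorems \ref{comparison2}, \ref{main2} and Corollary \ref{f.and.f} yield the following," and your argument is precisely that derivation, spelled out with the correct dictionary ($\circ\leftrightarrow\ast$, $I\leftrightarrow J$, $\D_A\leftrightarrow\D^A$, $\D^I\leftrightarrow\D_J$, comonadic/monadic, $\zeta\leftrightarrow\overline{\zeta}$) and with the explicit form of (ii) obtained by dualising Proposition \ref{coaction1}. No gaps; this is the same approach, only more detailed than the paper.
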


Cauchy completeness of $\D$ allows for the following characterisations.

\begin{theorem}\label{unit.split.d} Let $(\D, \circ, I, \ast, J)$ be a duoidal
category with Cauchy complete $\D$.
Assume either of the conditions
\begin{rlist}
\item  the morphism $\tau: I \to J$ is a split epimorphism, or
\item the unit of the adjunction
$\xymatrix{
\D_J \ar@/_.6pc/[r]_-{U_J} \ar@{}[r]|-{\perp} &\D \ar@{}[r]|-{\perp} \ar@/_.6pc/[r]_-{\phi^I} \ar@/_.6pc/[l]_-{\phi_J} &\D^I \ar@/_.6pc/[l]_-{U^I}}
$ is a split epimorphism, or
\item the composite $\phi^I U_J: \D_J \to \D^I$ is full and faithful.
\end{rlist}
 Then, for any bimonoid $(A, m, e, \delta, \ve)$ in $\D$, the  functor
${K'=-\ast A: \D_J \to \D_{A}^A}$
 is an equivalence of categories if
and only if $\phi^A U_J : \D_J \to \D^A$ is $\bT^r_{A}$-Galois.
\end{theorem}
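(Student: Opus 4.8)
The plan is to obtain all three cases at once from the opposite-duoidal-category device that already produced Theorem \ref{d.comparison2} from Theorem \ref{comparison2}. Regard $A$ as a bimonoid in $\D^{\text{op}}=(\D^{\text{op}},\ast,J,\circ,I)$ (see \cite[Remark 6.26]{AM}). Passing to $\D^{\text{op}}$ interchanges the two monoidal structures together with their units $I$ and $J$, so the monad $\bT^r_A=-\circ A$ becomes a comonad and the comonad $\bG^r_A=-\ast A$ becomes an opmonoidal monad on $\D^{\text{op}}$; after passing to opposite categories the comparison functor $K'=-\ast A\colon\D_J\to\D^A_A$ is the functor $K$ of \ref{comparison1} built inside $\D^{\text{op}}$, while $\phi^A U_J$ and the condition ``$\bT^r_A$-Galois'' correspond to $\phi_A U^I$ and ``$\bG^r_A$-Galois''. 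Since idempotents split in $\D$ exactly when they split in $\D^{\text{op}}$, Cauchy completeness is preserved, so Theorems \ref{unit.split}, \ref{main2} and Corollary \ref{f.and.f} are all available in $\D^{\text{op}}$; it then remains to match hypotheses (i)--(iii) with those three results and transport the common conclusion back. Equivalently, in the language of Theorem \ref{d.comparison2} it suffices to show that each of (i)--(iii) forces $\phi^A U_J\colon\D_J\to\D^A$ to be monadic, so that only the Galois condition survives.

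For (i) I would also argue directly. A section $s$ of the split epimorphism $\tau\colon I\to J$ together with the identity $\ve\cdot e=\tau$ of diagram (IV) in \ref{bimonoid} gives $\ve\cdot(e\,s)=1$, so $\ve\colon A\to J$ is a split epimorphism; hence the counit $X\ast\ve\colon X\ast A\to X\ast J\cong X$ of $\bG^r_A$ is a split epimorphism. This is precisely the $\D^{\text{op}}$-mirror of the monad-unit hypothesis ``$e\colon I\to A$ is a split monomorphism'' used in the proof of Theorem \ref{unit.split}, and the dual of Proposition \ref{comonadicity}(i) (split-epi comonad counit and Cauchy completeness, together with the automatic monadicity of the forgetful functor $U_J$) makes $\phi^A U_J$ monadic.

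For (ii) and (iii) I would use the dual of Proposition \ref{split1}. The transformation named in (ii) is the counit $\epsilon\colon\phi_J U^I\,\phi^I U_J\to 1_{\D_J}$ of the adjunction $\phi_J U^I\dashv\phi^I U_J$, and its being a split epimorphism is exactly the assertion that the unit of the mirror adjunction in $\D^{\text{op}}$ is a split monomorphism, which is the hypothesis of \ref{split1}. The proof of \ref{split1} dualizes line by line: one replaces the $(A,A)$-bimodule structure on $J$ coming from the monoid morphism $\ve\colon A\to J$ by the $(A,A)$-bicomodule structure on $I$ coming from the comonoid morphism $e\colon I\to A$, and uses the duals of Remark \ref{remark} and of Proposition \ref{split.comon} to conclude that $\phi^A U_J$ is conservative, that the relevant $\phi^A U_J$-split pairs have split coequalisers, and hence that $\phi^A U_J$ is monadic. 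Case (iii) then reduces to (ii): being the right adjoint in $\phi_J U^I\dashv\phi^I U_J$, the functor $\phi^I U_J$ is full and faithful precisely when $\epsilon$ is invertible, hence a split epimorphism --- this mirrors Corollary \ref{f.and.f}, where a full and faithful left adjoint has invertible unit. In every case Theorem \ref{d.comparison2} then delivers the stated equivalence ``iff $\phi^A U_J$ is $\bT^r_A$-Galois''.

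The part needing real care, rather than formal dualization, is the bookkeeping of the duality itself: one must verify that under $\D\mapsto\D^{\text{op}}$ the interchanges of $\circ$ with $\ast$, of $I$ with $J$, of monad with comonad, of module with comodule, of split monomorphism with split epimorphism, and of unit with counit are all mutually consistent, so that the three conclusions computed in $\D^{\text{op}}$ really do transport to the single conclusion stated here. The one genuinely asymmetric ingredient is the argument of Proposition \ref{split1}, which singles out the monoid $(J,\mu,\tau)$ and the map $\ve\colon A\to J$; it dualizes because it rests only on diagrams (II) and (IV) of \ref{bimonoid}, and under the duality (II) is interchanged with (III) while (IV) is self-dual, so the dual argument legitimately uses the comonoid $(I,\Delta,\tau)$ and the map $e\colon I\to A$ in their place.
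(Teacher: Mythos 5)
Your proposal is correct and is essentially the paper's own argument: the paper derives Theorem \ref{unit.split.d} purely by regarding $A$ as a bimonoid in the opposite duoidal category $(\D^{\mathrm{op}},\ast,J,\circ,I)$ and dualising Theorems \ref{unit.split}, \ref{main2} and Corollary \ref{f.and.f} (conditions (i)--(iii) respectively), exactly as you do. Your additional bookkeeping --- $\tau$ split epi forcing $\ve$ split epi via diagram (IV), reading condition (ii) as the counit of $\phi_J U^I\dashv\phi^I U_J$ being a split epimorphism, and the interchange of diagrams (II) and (III) in the dual of Proposition \ref{split1} --- correctly supplies details the paper leaves implicit.
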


Note that Corollary \ref{f.and.f} subsumes \cite[Theorem 3.11]{BCZ}, while
 \cite[Theorem 3.13]{BCZ} is a consequence of  Theorem \ref{unit.split.d} (iii).

\bigskip

{\bf Acknowledgments.} This research was partially supported by
Volkswagen Foundation (Ref.: I/85989).
The first author also gratefully acknowledges the support of the
Shota Rustaveli National Science Foundation Grant DI/12/5-103/11.

\bigskip

\noindent
{\bf Addresses:} \\[+1mm]
{A. Razmadze Mathematical Institute of I. Javakhishvili Tbilisi State University,\\
6, Tamarashvili Str.,  {\small and} \\
 {Tbilisi Centre for Mathematical Sciences,
Chavchavadze Ave. 75, 3/35, \\
Tbilisi 0177},  Georgia.\\
    {\small bachi@rmi.ge}\\[+1mm]
{Department of Mathematics of HHU, 40225 D\"usseldorf, Germany},\\
  {\small wisbauer@math.uni-duesseldorf.de}

\end{document}